\theoremstyle{plain}
\newtheorem{Theorem}{Theorem}[section] 
\newtheorem{Theoremintro}{Theorem}
\newtheorem{Proposition}[Theorem]{Proposition}
\newtheorem{Corollary}[Theorem]{Corollary}
\newtheorem{Definition}[Theorem]{Definition}
\newtheorem{Notation}[Theorem]{Notation}
\newtheorem{Lemma}[Theorem]{Lemma}
\newtheorem{Remark}[Theorem]{Remark}
\newtheorem{Definition/Proposition}[Theorem]{Definition/Proposition}
\newtheorem*{Theorem*}{Theorem}
\date{}
\theoremstyle{definition}
\title{Grading of affinized Weyl semi-groups of Kac-Moody type}
\author{ Paul \textsc{Philippe} \\ Université Jean Monnet, Saint-Étienne, France\\ Institut Camille Jordan, UMR 5208 CNRS \\ paul.philippe@univ-st-etienne.fr}
\renewcommand\theequation%
\makeatletter \@addtoreset{figure}{section}\makeatother
\newcommand{\R}{\mathbb{R}}
\newcommand{\N}{\mathbb{N}}
\newcommand{\Z}{\mathbb{Z}}
\newcommand{\htt}{\mathrm{ht}}
\newcommand{\efface}[1]{}
\newcommand{\pr}{\mathrm{proj}}
\newcommand{\cC}{\mathcal{C}}
\newcommand{\cP}{\mathcal{P}}
\newcommand{\bx}{\mathbf{x}}
\newcommand{\by}{\mathbf{y}}
\newcommand{\bz}{\mathbf{z}}
\renewcommand{\phi}{\varphi}
\renewcommand{\emptyset}{\varnothing}
\renewcommand{\tilde}[1]{\widetilde{#1}}
\def\Ddots{\mathinner{\mkern1mu\raise\p@
\vbox{\kern7\p@\hbox{.}}\mkern2mu
\raise4\p@\hbox{.}\mkern2mu\raise7\p@\hbox{.}\mkern1mu}}
\renewcommand{\hom}{\operatorname{Hom}}
\DeclareMathOperator{\sgn}{sgn}
\newcommand{\Inv}{\mathrm{Inv}}
\begin{document}
\setcounter{tocdepth}{2}
\bibliographystyle{alpha}
\maketitle
\begin{abstract}
    For any Kac-Moody root datum $\mathcal D$, D. Muthiah and D. Orr have defined a partial order on the semidirect product $W^a_+$ of the integral Tits cone with the vectorial Weyl group of $\mathcal D$, and a compatible length function. We classify covers for this order and show that this length function defines a $\mathbb Z$-grading of $W^a_+$, generalizing the case of affine ADE root systems and giving a positive answer to a conjecture of Muthiah and Orr.
\end{abstract}

\tableofcontents

\section*{Introduction}
\subsection*{Motivations}
\subsubsection*{Reductive groups over p-adic fields}
Let $\mathbf G$ be a split reductive group scheme with the data of a Borel subgroup $\mathbf B$ containing a maximal torus $\mathbf T$. Let $W=N_{\mathbf G}(\mathbf T)/\mathbf T$ be its vectorial Weyl group and $Y$ be its coweight lattice: $Y=\hom(\mathbb G_m,\mathbf T)$.
The action of $W$ on $\mathbf T$ induces an action of $W$ on $Y$ and allows to form the semidirect product $W^a=Y\rtimes W$. This group called the extended affinized Weyl group of $\mathbf G$, appears naturally in the geometry and the representation theory of $\mathbf G$ over discretely valued fields. A foundational work in this regard was done by N. Iwahori and H. Matsumoto in 1965 (\cite{iwahori1965bruhat}), when they exhibited a Bruhat decomposition of $\mathbf G(\mathbb Q_p)$ indexed by $W^a$.

Let $\mathcal K$ be a non-archimedean local field with ring of integers $\mathcal O_\mathcal K \subset \mathcal K$, uniformizer $\pi\in \mathcal O_\mathcal K$ and residue field $\mathds k_\mathcal K=\mathcal O_\mathcal K/\pi$.
Let $G=\mathbf G(\mathcal K)$, let $K=\mathbf G(\mathcal O_\mathcal K)$ its integral points and let $I$ be its Iwahori subgroup, defined as $I=\{g\in K \mid g\in \mathbf B(\mathds k_\mathcal K) \mod \pi \}.$ The extended affinized Weyl group can be understood as $N_G(\mathbf T(\mathcal K))/\mathbf T(\mathcal O_\mathcal K)$, so admits a lift in $G$. Then, $G$ admits a decomposition in $I$-double cosets indexed by $W^a$, the Iwahori-Matsumoto-Bruhat decomposition: \begin{equation}G=\bigsqcup\limits_{\pi^\lambda w \in W^a}I \pi^\lambda w I.\end{equation}
The group $W^a$ is a finite extension of a Coxeter group and thus admits a Bruhat order which arises from the geometry of the homogeneous space $G/I$: for any $\pi^\lambda w \in W^a$, $I \pi^\lambda w I$ is a subvariety of pure dimension $\ell(\pi^\lambda w)$ in $G/I$, and its closure admits a disjoint decomposition in $I$ orbits: \begin{equation}\label{eq: IM dec}\overline{I \pi^\lambda w I}=\bigsqcup\limits_{\pi^\mu v \leq \pi^\lambda w} I \pi^\mu v I\end{equation} which extends the Iwahori-Matsumoto decomposition. The connection between the geometry of $G/I$ and the combinatorial structure of $W^a$ is deeper. In particular, $R$-Kazhdan-Lusztig polynomials introduced by Kazhdan and Lusztig (\cite{kazhdan1980schubert}), defined as the number of points of certain intersections in $G/I$, are also given by a recursive formula based on the Bruhat order and the Bruhat length of $W^a$.

These polynomials appear in many topics around reductive groups over local fields, we aim to develop analogous polynomials when $\mathbf G$ is replaced by a general Kac-Moody group. 
\subsubsection*{Extension to Kac-Moody groups}
Replace $\mathbf G$ by a general split Kac-Moody group. Kac-Moody group functors are entirely defined by the underlying Kac-Moody root datum $\mathcal D$, as defined in \cite[\S 2]{remy2002groupes}, and reductive groups correspond to root data of finite type. Then the Iwahori-Matsumoto decomposition no longer holds on $G=\mathbf G(\mathcal K)$. However there is a partial Iwahori-Matsumoto decomposition: there exists a subsemigroup $G^+$ of $G$ such that:
\begin{equation}G^+=\bigsqcup\limits_{\pi^\lambda w\in W^a_+} I \pi^\lambda w I. \end{equation}
The indexing set for this decomposition $W^a_+$ is a subsemigroup of $W^a=Y\rtimes W$, and it appears naturally in other related contexts, for example when trying to construct an Iwahori-Hecke algebra for $G$ \cite{braverman2016iwahori,bardy2016iwahori}. Let us briefly explain how $W^a_+$ is defined:

Let $\Phi$ be the real root system of the root datum $\mathcal D$. It is an infinite set (unless $\mathcal D$ is reductive) of linear forms on $Y$ coming with a subset of positive roots $\Phi_+ \subset \Phi$ such that $\Phi=\Phi_+ \sqcup -\Phi_+$. Let $Y^{++}=\{\lambda \in Y \mid \forall \alpha \in \Phi_+,\; \alpha(\lambda)\geq 0 \}$ and $Y^+=W\cdot Y^{++}$. Then $W^a_+$ is defined as $Y^+\rtimes W$. In the reductive case, $Y^+$ coincides with $Y$ and thus $W^a_+=W^a$. 
However, $W^a$ can no longer be conceived as a finite extension of a Coxeter system, hence there is a priori no Bruhat order on $W^a_+$, let alone on $W^a$. A well-behaved topology on $G^+/I$ would allow to define an order on $W^a_+$ through the analog of decomposition~\eqref{eq: IM dec}, but $G^+/I$ does not seem to have a natural variety, nor even an ind-variety structure.

\subsubsection*{An order and two lengths on $W^a_+$}
In appendix of their article on the construction of an Iwahori-Hecke algebra for $G$ an affine Kac-Moody group over a p-adic field \cite[Appendix B2]{braverman2016iwahori}, A. Braverman, D. Kazhdan and M. Patnaik propose the definition of a preorder on $W^a_+$ which would replace the Bruhat order of $W^a$ and they conjecture that it is a partial order. In \cite{muthiah2018iwahori}, D. Muthiah extends the definition of this preorder to any Kac-Moody group $G$, defines a $\mathbb Z \oplus \varepsilon \mathbb Z$-valued length compatible with this preorder and hence shows that it is an order. In a joint work \cite{muthiah2019bruhat}, D. Muthiah and D. Orr then show that this length can be evaluated at $\varepsilon = 1$ to obtain a $\mathbb Z$-valued length strictly compatible with the order on $W^a_+$.

In order to build a Kazhdan-Lusztig theory of p-adic Kac-Moody groups, we want to understand how close this order is to the Bruhat order of an affine Coxeter group, which properties still hold and which do not. The definition of a $\mathbb Z$-length is already a significant step, but many important properties, which are known to hold for Bruhat orders, remain unknown in this context. Several were proved only for Kac-Moody root systems of affine simply laced type using the specific structure of affinized Weyl group of $W$ in this context.
\subsubsection*{Choice of vocabulary}The order on $W^a_+$ is often mentioned in the literature as "double affine Bruhat order" and the associated length as "double affine Bruhat length" because it is most studied in the case of $G$ a Kac-Moody group of affine type (in which case $W$ is an affine Weyl group). In this paper we refer to it as "the affinized Bruhat order" and "the affinized Bruhat length", denoted $\ell^a$, because we do not suppose that $W$ is an affine Weyl group. Note that, if $W$ is finite, then the affinized Bruhat length and order on $W^a_+$ are just the ones induced by its Coxeter group structure.  
\subsubsection*{Main result}
Our main result is a positive answer to \cite[Conjecture 1.5]{muthiah2019bruhat} in full generality:

For any partial order $\leq$ on a set $X$, we say that $\by$ covers $\bx$ if $\bx\neq \by$ and $\{\bz \in X \mid \bx\leq \bz\leq \by\}=\{\bx,\by\}$. A grading of $X$ is a length function $\ell$ on $X$ strictly compatible with $\leq$ and such that $\by$ covers $\bx$ if and only if $\bx \leq \by$ and $\ell(\by)-\ell(\bx)=1$. Gradings thus give an easy classification of covers and more generally of saturated chains in $X$. The Bruhat length for a Coxeter group equipped with the Bruhat order is the prototypical example of a grading.

In \cite{muthiah2019bruhat}, Muthiah and Orr prove that if $\Phi$ is of affine ADE type, the affinized Bruhat length gives a $\mathbb Z$-grading of $W^a_+$ for the affinized Bruhat order and conjecture this to be true in general. The main result of this paper is a positive answer to this conjecture:
\begin{Theoremintro}\label{Theorem : maintheoremintro}
    Let $\mathcal D$ be any Kac-Moody root datum. Then the affinized length $\ell^a$ on $W^a_+$ defines a $\mathbb Z$-grading of $W^a_+$ strictly compatible with the affinized Bruhat order.
    Otherwise said, let $\bx,\by \in W^a_+$ be such that $\bx\leq\by$. Then
    \begin{equation}\by \text{ covers } \bx \text{ if and only if } \ell^a(\by)-\ell^a(\bx)=1.\end{equation}
\end{Theoremintro}

Along the way, we obtain several geometric properties of covers for the affinized Bruhat order which we expect to be insightful even if the root datum is reductive (so $W$ is finite and $W^a_+$ is an affine Weyl group) as they only rely on the Coxeter structure of $W$. In particular, we obtain in Proposition~\ref{Proposition classification general} a classification of covers which generalize results obtained using quantum Bruhat graphs, in the reductive setting by T. Lam and M. Shimozono (\cite[Proposition 4.4]{lam2010quantum}) and F. Schremmer \cite[Proposition 4.5]{schremmer2023affine}, and in the affine simply laced setting by A. Welch (\cite[Theorem 2]{welch2022classification}).

\subsubsection*{Further directions}
In an upcoming joint work with A. Hébert, we prove that any element of $W^a_+$ admits a finite number of covers for the affinized Bruhat order. We use this finiteness in the context of masures to define $R$-Kazhdan-Lusztig polynomials, following Muthiah's strategy exposed in \cite{muthiah2019double} and the work on twin masures of N. Bardy-Panse, A. Hébert and G. Rousseau \cite{twinmasures}. Our understanding of covers is useful to compute these $R$-polynomials and we intend to use $R$-polynomials to define $P$-Kazhdan-Lusztig polynomials.

Another interesting (but quite long reach) question is the following: $W^a_+$ appears as the affinization of $W$, which may be taken as an affinized version of a finite Coxeter group. Can we iterate the affinization process, e.g to obtain a valid theory for reductive or Kac-Moody groups on valued fields of higher dimensions?

Finally, little is known on the preorder defined on the whole semidirect product $W^a$, it could be insightful to study it and to connect it to the failure of the full Iwahori-Matsumoto decomposition of $G$.
\subsection*{Organization of the paper}
\subsubsection*{Proof strategy}
 The global strategy is to construct a non-trivial chain from $\bx$ to $\by$ every time $\by\geq \bx$ verifies $\ell^a(\by)-\ell^a(\bx)>1$. Let $\pr^{Y^+}$ denote the projection $W^a_+=Y^+\rtimes W \rightarrow Y^+$. We distinguish two cases which depend on the form of $\bx$ and $\by$: The first case is when $\pr^{Y^+}(\by)$ lies in the orbit of $\pr^{Y^+}(\bx)$, we call such covers the vectorial covers. The other case is when $\pr^{Y^+}(\by)\notin W\cdot\pr^{Y^+}(\bx)$, we call such covers the properly affine covers.

If $\pr^{Y^+}(\by)\in W\cdot\pr^{Y^+}(\bx)$, we show that the affinized Bruhat order on $\{\bz \in W^a_+ \mid \bx \leq \bz \leq \by\}$ is, in some sense, a lift of several Bruhat-like orders on $W$. We are then able to construct chains between $\bx$ and $\by$ from chains in $W$, we deduce a classification of vectorial covers. The characterization of properly affine covers is, at first glance, more involved. Through a careful study of the relation between the vectorial chamber containing $\pr^{Y^+}(\bx)$ and the vectorial chamber containing $\pr^{Y^+}(\by)$, we show that the length difference $\ell^a(\by)-\ell^a(\bx)$ can be rewritten in a more workable form, making clear the conditions for which it is equal to one. Then the difficulty is to build, explicitly, a non-trivial chain every time one of these conditions is not satisfied. 
\subsubsection*{Organization} 
Section~\ref{section: preliminaries} consists of preliminaries. In Subsection~\ref{subsection: definitions} we formally define everything we mentioned in this introduction. In particular we give the definition of the affinized Bruhat order and the two affinized Bruhat lengths as they are given in \cite{muthiah2019bruhat}. To be more flexible, we chose to define the affinized Bruhat order on the whole affinized Weyl group $W^a=Y\rtimes W$, on which it may not be a preorder.

We show, amongst other preliminary results, that we indeed recover the affinized Bruhat order on $W^a_+$ from this preorder in Subsection~\ref{subsection: prelimary results}.

We also give, in Subsection~\ref{subsection: geometric interpretation}, a geometric interpretation of $W^a_+$ and its affinized Bruhat order, which is to be compared with the interpretation of the Bruhat order in the Coxeter complex of a Coxeter group. Even though it is not clearly mentioned in the rest of the paper, this geometric interpretation was very useful to construct chains and understand $W^a_+$.

\vspace{0.5 cm}

In Section~\ref{Section: constant dominance class}, we prove Theorem~\ref{Theorem : maintheoremintro} for vectorial covers. We define relative versions of the Bruhat order on $W$ in Subsection~\ref{subsection: 2.1} and we connect theses relative Bruhat orders to the affinized Bruhat length in Subsection~\ref{subsection 2.2}. This is enough to prove Theorem~\ref{Theorem : maintheoremintro} when $\pr^{Y^+}(\by)=\pr^{Y^+}(\bx)$ (see Theorem~\ref{Theorem: constantcoweight}). Using finer results on parabolic quotients in Subsection~\ref{subsection: 2.3}, we extend it to vectorial covers such that $\pr^{Y^+}(\by)\in W\cdot\pr^{Y^+}(\bx)\setminus \{\pr^{Y^+}(\bx)\}$ (see Theorem~\ref{non_reg_covers_same_orbit}). 

\vspace{0.5 cm}

In Section~\ref{section: varying dominant coweight}, we deal with properly affine covers. We first show in Subsection~\ref{subsection: 3.1} that these covers are of a very specific form. Namely, if $\bx=\pi^{v(\lambda)}w$ with $v,w\in W$ and $\lambda \in Y^{++}$, then $\by$ needs to be of the form $\pi^{v(\lambda+\beta^\vee)}s_{v(\beta)}w$ or $\pi^{vs_\beta(\lambda+\beta^\vee)}s_{v(\beta)}w$ for some $\beta \in \Phi_+$.

The strategy is then to get enough necessary conditions on $v,w,\lambda,\beta$ for $\by$ to cover $\bx$, in order to obtain a simplified expression for $\ell^a(\by)-\ell^a(\bx)$. Proposition~\ref{covers_minimal_galleries} gives a first result in this direction. In Subsection~\ref{subsection: 3.2} we fully exploit this strategy to obtain Expression~\eqref{eq: length difference generalcase} for the length difference.

Finally, in Subsections~\ref{subsection: almost dominance} and~\ref{subsection: quantum roots}, we construct various chains from $\bx$ to $\by$ to prove that the quantities appearing in Expression~\eqref{eq: length difference generalcase} need to be minimal when $\by$ covers $\bx$, which allows us to conclude the argument in Subsection~\ref{subsection: conclusion}. 

\subsubsection*{Acknowledgements}
I am especially grateful to Auguste Hébert for his help in understanding the affinized Bruhat order and his feedback throughout the conception of this paper, as well as to Stéphane Gaussent for his supervision and his careful rereading. I would also like to thank the anonymous referee for his help and corrections.

\newpage

\section{Preliminaries} \label{section: preliminaries}

\subsection{Definitions and notation} \label{subsection: definitions}

Let $\mathcal D = (A,X,Y,(\alpha_i)_{i \in I},(\alpha_i^\vee)_{i \in I})$ be a Kac-Moody root datum as defined in \cite[\S 8]{remy2002groupes}. It is a quintuplet such that:
\begin{enumerate}
    \item $I$ is a finite indexing set and $A=(a_{ij})_{(i,j)\in I\times I}$ is a generalized Cartan matrix.
    \item $X$ and $Y$ are two dual free $\mathbb Z$-modules of finite rank, we write $\langle \cdot,\cdot \rangle$ the duality bracket.
    \item $(\alpha_i)_{i\in I}$ (resp. $(\alpha_i^\vee)_{i \in I}$) is a family of linearly independent elements of $X$ (resp. $Y$): the simple roots (resp. simple coroots).
    \item For all $(i,j) \in I^2$ we have $\langle \alpha_i^\vee,\alpha_j \rangle = a_{ij}$.
\end{enumerate}

\subsubsection{Vectorial Weyl group} For every $i \in I$ set $s_i \in \operatorname{Aut}_\mathbb Z(X):  x \mapsto x- \langle \alpha_i^\vee, x \rangle \alpha_i$. The generated group $W=\langle s_i \mid i \in I \rangle$ is the \textbf{vectorial Weyl group} of the Kac-Moody root datum. 

The duality bracket $\langle \cdot,\cdot\rangle$ induces a contragredient action of $W$ on $Y$, explicitly $s_i(y)=y-\langle y,\alpha_i\rangle \alpha_i^\vee$. The bracket is then $W$-invariant.

The vectorial Weyl group $W$ is a Coxeter group with set of simple reflections $S=\{s_i\mid i\in I\}$, in particular it has a Bruhat order $<$ and a length function $\ell$ compatible with the Bruhat order. We refer to \cite{bjorner2005combinatorics} for general definitions and properties of Coxeter groups. A \textbf{reflection} in a Coxeter group is any element conjugated to a simple reflection.

\subsubsection{Real roots} Let $\Phi=W\cdot\{\alpha_i \mid i \in I \}$ be the set of real roots of $\mathcal D$, it is a root system in the classical sense, but possibly infinite.
In particular let $\Phi_+= \bigoplus\limits_{i\in I}\mathbb N \alpha_i \cap \Phi$ be the set of positive real roots, then $\Phi=\Phi_+ \sqcup -\Phi_+$, we write $\Phi_-=-\Phi_+$ the set of negative roots.

The set $\Phi^\vee=W\cdot\{\alpha_i^\vee \mid i \in I\}$ is the set of \textbf{coroots}, and its subset $\Phi^\vee_+=\bigoplus\limits_{i\in I}\mathbb N \alpha_i^\vee \cap \Phi^\vee$ is the set of \textbf{positive coroots}.

To each root $\beta$ corresponds a unique coroot $\beta^\vee$: if $\beta=w(\alpha_i)$ then $\beta^\vee=w(\alpha_i^\vee)$. This map $\beta\mapsto\beta^\vee$ is well defined, bijective between $\Phi$ and $\Phi^\vee$ and sends positive roots to positive coroots. Note that $\langle \beta^\vee,\beta\rangle = 2$ for all $\beta \in \Phi$.

Moreover to each root $\beta$ we associate a reflection $s_\beta \in W$: If $\beta=w(\pm\alpha_i)$ then $s_\beta:= ws_iw^{-1}$. Explicitly it is the map $Y\rightarrow Y$ defined by $s_\beta(x)=x-\langle \beta^\vee , x \rangle \beta$. For any $\beta\in\Phi$ we have $s_\beta=s_{-\beta}$ and the map $\beta \mapsto s_\beta$ forms a bijection between the set of positive roots and the set $\{ws_iw^{-1}\mid (w,i)\in W\times I\}$ of reflections of $W$. 

\subsubsection{Inversion sets}
For any $w \in W$, let $\Inv(w)=\Phi_+ \cap w^{-1}(\Phi_-)= \{ \alpha \in \Phi_+ \mid w(\alpha) \in \Phi_- \}$. Theses sets are strongly connected to the Bruhat order, as by \cite[1.3.13]{kumar2002kac}, for all $\alpha \in \Phi_+$
\begin{equation}
    \alpha \in \Inv(w) \iff ws_\alpha < w \iff s_\alpha w^{-1} < w^{-1}.\end{equation}
Moreover, they are related to the Bruhat length: $\ell(w)=|\Inv(w)|$ (\cite[1.3.14]{kumar2002kac}).

\subsubsection{Fundamental chamber and Tits cone}
We define the (closed) integral fundamental chamber by $Y^{++}=\{\lambda \in Y \mid \langle \lambda,\alpha_i \rangle \geq 0 \; \forall i \in I \}$. If $\lambda\in Y^{++}$, we say that it is a \textbf{dominant coweight}. Then, the integral Tits cone is $Y^{+}:=\bigcup\limits_{w\in W} w(Y^{++})$. It is a convex cone of $Y$, in particular it is a semigroup for the group operation of $Y$, and it is equal to $Y$ if and only if $W$ is finite, if and only if $\Phi$ is finite, if and only if $A$ is of finite type (see \cite[1.4.2]{kumar2002kac}).

The integral fundamental chamber $Y^{++}$ is a fundamental domain for the action of $W$ on $Y^{+}$, and for any $\lambda\in Y^+$ we define $\lambda^{++}$ to be the unique element of $Y^{++}$ in its $W$-orbit.

There is a height function on $Y^+$, defined as follows:

\begin{Definition}
Let $(\Lambda_i)_{i \in I}$ be a set of fundamental weights, that is to say $\langle \alpha_i^\vee,\Lambda_i \rangle = \delta_{ij}$ for any $i,j \in I$. We fix it once and for all. Let $\rho= \sum_{i \in I} \Lambda_i$. Then for any $\lambda \in Y$ define the \textbf{height} of $\lambda$ as: \begin{equation}\htt(\lambda)=\langle \lambda, \rho \rangle.    
\end{equation}
The height depends on the choice of fundamental weights, but its restriction to $Q^\vee=\bigoplus\limits_{i\in I} \mathbb Z \alpha_i^\vee$ does not: $\htt(\sum_{i \in I} n_i \alpha_i^\vee)=\sum_{i \in I} n_i$.
\end{Definition}

\subsubsection{Parabolic subgroups, minimal coset representatives}
For $\lambda \in Y^+$, let $\Phi_\lambda$ denote the set $\{\alpha \in \Phi \mid \langle \lambda, \alpha \rangle =0\}$ and $W_\lambda=\operatorname{Stab}_{W}(\lambda)$. We say that $\lambda$ is \textbf{regular} if $\Phi_\lambda=0$, or equivalently if $W_\lambda=1_{W}$. More generally we say that $\lambda$ is \textbf{spherical} if $W_\lambda$ is finite.

Let $v\in W$ be such that $\lambda =v\lambda^{++}$. Then $W_\lambda v = vW_{\lambda^{++}}$ and, since $\lambda^{++}$ is dominant, $W_{\lambda^{++}}$ is a standard parabolic subgroup, that is a group of the form $W_J = \langle s \mid s \in J\rangle$ where $J\subset S$ is a set of simple reflections. More precisely, $J=\{s\in S\mid s(\lambda^{++})=\lambda^{++}\}$.

By standard Coxeter group theory (see for instance \cite[Section 2.2]{bjorner2005combinatorics}), for any $u\in W$, the left coset $uW_{\lambda^{++}}=u
W_J$ has a unique representative of minimal length which we denote $u^J$, and one has a decomposition $u=u^J u_J$ with $u_J \in W_J$ such that: \begin{equation}\label{eq : additive parabolic length}\ell(u)=\ell(u^J)+\ell(u_J).\end{equation}
\begin{Notation}\label{Notation : paraboliccosets}

\begin{enumerate}
    \item For any $J\subset S$, we denote by $W^J$ the set of minimal length representatives for $W_J-$cosets in $W$: 
    \begin{equation}w\in W^J \iff \forall \tilde w \in W_J,\; \ell(w\tilde w)>\ell(w)\iff \forall s\in J,\; \ell(ws)>\ell(w).\end{equation}
    If $\lambda\in Y^{++}$ is such that $W_\lambda=W_J$, then we may use $W^\lambda$ as an alternative notation for $W^J$. 
    \item For any $\lambda \in Y^+$ (not necessarily dominant), we denote by $v^\lambda$ the minimal length element in $W$ which verifies $\lambda=v^\lambda \lambda^{++}$:
    \begin{equation}
        v^\lambda = \min\{v\in W \mid \lambda=v\lambda^{++}\} .\end{equation} In other words, for any $u \in W$ such that $\lambda = u\lambda^{++}$, we have $v^\lambda=u^J$, where $J$ is the set of simple reflections such that $W_J=W_{\lambda^{++}}$.
    \end{enumerate}
\end{Notation}

\subsubsection{Affinized Weyl semigroup}
The action of $W$ on $Y$ allows to form the semidirect product $Y\rtimes W$, which we denote $W^a$. We denote its elements by $\pi^\lambda w$ with $\lambda \in Y, w\in W$.

By definition, $Y^+\subset Y$ is stable by the action of $W$ on $Y$, therefore we can form $W^a_+=Y^+ \rtimes W$ which is a subsemigroup of $W^a$. This semigroup is called the affinized Weyl semigroup. In \cite{muthiah2019bruhat} Muthiah and Orr define a Bruhat order and an associated length function on $W^a_+$ which we aim to study in this article.

Denote by $\pr^{Y^+}:W^a_+\rightarrow Y^+$ the canonical projection, which sends $\pi^\lambda w$ onto $\lambda$. Moreover denote by $\pr^{Y^{++}}:W^a_+\rightarrow Y^{++}$ the projection to $Y^{++}$: $\pr^{Y^{++}}(\bx)=(\pr^{Y^+}(\bx))^{++}$. Let us call $\pr^{Y^+}(\bx)$ the \textbf{coweight} of $\bx$, and $\pr^{Y^{++}}(\bx)$ its \textbf{dominance class}.
\subsubsection{Affinized roots}
Let $\Phi^a=\Phi \times \mathbb Z$ be \textbf{the set of affinized roots} and denote by $\beta+n\pi$ the affinized root $(\beta,n)$. The affinized root $\beta+n\pi$ is said to be positive if $n>0$ or ($n=0$ and $\beta \in \Phi_+$) and we write $\Phi^a_+$ for the set of positive affinized roots. We have $\Phi^a = \Phi^a_+ \sqcup -\Phi^a_+$.

The semidirect product $W^a$ acts on $\Phi^a$ by:
\begin{equation}\label{eq : W^a_+ action}
    \pi^\lambda w(\beta+n\pi)=w(\beta)+(n+\langle \lambda,w(\beta)\rangle)\pi.
\end{equation}

For any $n \in \mathbb Z$, its sign is denoted $\sgn(n)\in \{-1,+1\}$, with the convention that $\sgn(0)=+1$. Note that $|n|=\sgn(n)n$. We also define the sign of an affinized root: $\sgn(\beta+n\pi)\in \{-1,+1\}$ and $\sgn(\beta+n\pi)=+1$ if and only if $\beta+n\pi \in \Phi_+^a$. 

For $n\in \Z$ and $\beta\in \Phi_+$, set:\begin{align}\label{eq : affine_roots}
&\beta[n]=\sgn(n) \beta+ |n|\pi\in \Phi_+^a \\
&s_{\beta[n]}=\pi^{n\beta^\vee} s_\beta.\end{align}
We also define $\beta[n]\in \Phi^a_+$ for $\beta\in\Phi_-$, by $\beta[n]=(-\beta)[-n]$, and $s_{\beta[n]}=s_{-\beta[-n]}=\pi^{n\beta^\vee}s_\beta$. The affinized root $\beta[n]$ is therefore the positive affinized root within the pair $\{\beta+n\pi, -(\beta+n\pi)\}$. Note that $s_{\beta[0]}$ is the vectorial reflection $s_\beta$.
\subsubsection{Bruhat order on $W^a_+$} Recall Braverman, Kazhdan and Patnaik's definition of the Bruhat order $<$ introduced in \cite[Section B. 2]{braverman2016iwahori}:
Let $\bx \in W^a_+$ and let $\beta[n] \in \Phi_+^a$ be such that $\bx s_{\beta[n]} \in W^a_+$. Then,
\begin{equation}\label{eq : Bruhat order}
    \bx<\bx s_{\beta[n]} \iff \sgn(\beta+n\pi)=\sgn(\bx(\beta+n\pi))\iff \bx(\beta[n]) \in \Phi_+^a.
\end{equation}
Explicitly, if $\bx=\pi^\lambda w \in W^a_+$, the right-hand side condition writes out:
\begin{align*}
    &\sgn(n)(n+\langle \lambda,w(\beta)\rangle) > 0 \\
    &\text{or} \; n=-\langle \lambda,w(\beta)\rangle \; \text{and} \; \sgn(n)w(\beta)>0.
\end{align*}
Then we extend this relation by transitivity, which makes it a preorder. Originally, Braverman, Kazhdan and Patnaik defined it only for affine vectorial Weyl groups, but the definition extends to any vectorial Weyl group and Muthiah showed in \cite{muthiah2018iwahori} that it is an order on $W^a_+$ in general.

\subsubsection{Extension to $W^a$}
As the whole semidirect product $W^a$ acts on $\Phi_+^a$, Formula~\eqref{eq : Bruhat order} makes sense for any $\bx \in W^a$, and in this paper we define $<$ on $W^a$, as the closure by transitivity of the relation defined through~\eqref{eq : Bruhat order} for $\bx \in W^a$. We show in the next section that if $\bx<\by$ and $\by \in W^a_+$, then $\bx \in W^a_+$. This ensures that the restriction of the $W^a$-preorder to $W^a_+$ coincides with Braverman, Kazhdan, Patnaik's order on $W^a_+$. However $<$ may not be an order on $W^a$.
\subsubsection{Bruhat order through a right action}

In this paper, we consider multiplication by reflections on the left. To switch between the right and left actions note that:
\begin{equation}\label{eq : lefttorightmultiplication}s_{\beta[n]}\pi^\lambda w=\pi^{s_\beta\lambda + n\beta^\vee}s_{\beta}w=\pi^{\lambda +(n-\langle\lambda,\beta\rangle) \beta^\vee}ws_{w^{-1}(\beta)}= \pi^\lambda w s_{w^{-1}(\beta)[n-\langle \lambda,\beta\rangle]}.\end{equation} 
In particular:
    \begin{equation}\label{eq : productdominanceclass}
     s_{\beta[0]}\pi^\lambda w = s_{\beta}\pi^\lambda w = \pi^{s_\beta \lambda}s_\beta w \text{ and }s_{\beta[\langle \lambda,\beta\rangle]}\pi^\lambda w=\pi^\lambda s_\beta w=\pi^\lambda w s_{w^{-1}(\beta)}.
    \end{equation}

Using Formula~\eqref{eq : lefttorightmultiplication}, the affinized Bruhat order can be recovered using a right action of $W^a$ on $\Phi^a_+$.
\begin{Proposition}
    Let $\pi^\lambda w \in W^a$ and $(\beta,n)\in \Phi \times \mathbb Z \setminus( \Phi_-\times \{0\})$. Then

\begin{equation}\label{eq: leftaction}
    s_{\beta[n]}\pi^\lambda w > \pi^\lambda w \iff \sgn(n)w^{-1}(\beta)+(|n|-\sgn(n)\langle \lambda,\beta\rangle)\pi \in \Phi^a_+.
\end{equation}
\end{Proposition}
\begin{Remark}
    The root appearing in the right-hand side of Equivalence~\eqref{eq: leftaction} is the affinized root $(\pi^\lambda w)^{-1}(\beta[n])$.
\end{Remark}
\begin{proof}
    Let $\pi^\lambda w \in W^a$ and $\beta+n\pi\in \Phi^a$. Then by Formulas~\eqref{eq : Bruhat order} and~\eqref{eq : lefttorightmultiplication}, \begin{equation*}
        s_{\beta[n]}\pi^\lambda w >\pi^\lambda w \iff \sgn(\beta+n\pi)= \sgn(w^{-1}(\beta)+(n-\langle \lambda,\beta\rangle)\pi). 
    \end{equation*} If $(\beta,n)\notin \Phi_-\times \{0\}$, then $\beta[n]=\sgn(n)(\beta+n\pi)$ so this is equivalent to $\sgn(n)(w^{-1}(\beta)+(n-\langle \lambda,\beta\rangle)\pi)$ being a positive affinized root, which is Equivalence~\eqref{eq: leftaction}.
\end{proof}
Note that Formula~\eqref{eq: leftaction} is no longer correct if $\beta \in \Phi_-$ and $n=0$, in which case it needs to be applied to $(-\beta)[0]$.
Applying reflections on the left is better suited for the geometric interpretation we will give in Subsection \ref{subsection: geometric interpretation}.

\subsubsection{Terminology on partially ordered sets}

For $p\leq q \in \Z$, we denote by $\llbracket p,q\rrbracket$ the set $\{r\in \mathbb Z \mid p\leq r\leq q\}$. If $p>q$, then $\llbracket p,q\rrbracket$ is another notation for $\llbracket q,p\rrbracket$. We also write $\rrbracket p,q\llbracket$ for $\llbracket p,q\rrbracket\setminus\{p,q\}$.

Let $(\cP,\leq)$ be a partially ordered set. 
For $\bx,\by\in \cP$, we say that $\bx$ and $\by$ are comparable if $\bx\leq \by$ or $\by\leq \bx$. We say that $\by$ \textbf{covers} $\bx$ if $\bx\neq \by$ and $\{\bz \mid \bx \leq \bz\leq \by\}=\{\bx,\by\}$, we then write $\bx\lhd \by$.
If $\cP=W^+_a$, covers $\bx \lhd \by$ such that $\pr^{Y^{++}}(\by)=\pr^{Y^{++}}(\bx)$ are called \textbf{vectorial covers} and covers which are not vectorial covers are called \textbf{properly affine covers}.

A \textbf{chain} from $\bx$ to $\by$ is a finite sequence $(\bx_0,\dots, \bx_n)$ such that $\bx_0=\bx$, $\bx_n=\by$ and $\bx_k\leq \bx_{k+1}$ for all $k\in \llbracket0,n-1\rrbracket$. If $\cP=W^a_+$ (resp. if $\cP$ is a Coxeter group), we add the condition that $\bx_{k+1}\bx_k^{-1}$ is an affinized reflection (resp. a reflection). A chain is \textbf{saturated} if $\bx_k\lhd \bx_{k+1}$ for all $k\in \llbracket0,n-1\rrbracket$. 
We say that a subset $\cC$ of $\cP$ is \textbf{convex} if, for all $\bx,\by\in \cC$ and $\bz\in \cP$,
\begin{equation}\bx\leq \bz\leq \by \implies \bz\in \cC.\end{equation}
Equivalently, $\cC$ is convex if and only if any chain from one element of $\cC$ to another is contained in $\cC$.

Let $\ell:\cP\rightarrow A$ be a function with values in a totally ordered set $A$, we say that it is \textbf{order-preserving} if, for all $\bx,\by\in \cP$,
\begin{equation}
    \bx \leq \by \implies \ell(\bx)\leq \ell(\by).
\end{equation}We say that $\ell$ is a \textbf{strictly compatible} ($A$-valued) length function if:
\begin{equation}\bx < \by \iff \bx,\by \text{ are comparable and } \ell(\bx)< \ell(\by).\end{equation}

We say that a strictly compatible length function $\ell$ defines a \textbf{grading} of $\cP$ if:
\begin{equation}\bx\lhd \by \iff \bx\leq \by \text{ and } \ell(\bx)\lhd\ell(\by).\end{equation}
In particular if $A=\Z$ or $A=\N$, this rewrites 
\begin{equation}\bx\lhd \by \iff \bx\leq \by \text{ and } \ell(\by)=\ell(\bx)+1.\end{equation}
For instance, the Bruhat length on a Coxeter group $W$ is strictly compatible with the Bruhat order, and defines a $\N$-valued grading of $W$. Muthiah and Orr associated length functions strictly compatible with the Bruhat order on $W^a_+$, generalizing the classical Bruhat length on Coxeter groups. We now formally introduce these lengths.

\subsubsection{Length functions on $W^a_+$}

\begin{Definition}
The \textbf{affinized length function} is the map $W^a_+ \rightarrow \mathbb Z \oplus \varepsilon \mathbb Z$ defined by:
\begin{equation} 
    \ell^a_\varepsilon(\pi^\lambda w)=2\htt(\lambda^{++})+\varepsilon(|\{\alpha \in \Inv(w^{-1}) \mid \langle \lambda,\alpha \rangle \geq 0 \}|-|\{\alpha \in \Inv(w^{-1}) \mid \langle \lambda,\alpha \rangle < 0 \}|).
\end{equation}
The \textbf{affinized length with integral values} is the affinized length function on which we set $\varepsilon = 1$:
  \begin{equation}  \ell^a(\pi^\lambda w)=2\htt(\lambda^{++})+(|\{\alpha \in \Inv(w^{-1}) \mid \langle \lambda,\alpha \rangle \geq 0 \}|-|\{\alpha \in \Inv(w^{-1}) \mid \langle \lambda,\alpha \rangle < 0 \}|).
\end{equation}
\end{Definition}

\begin{Theorem}\cite[Theorem 4.24]{muthiah2018iwahori} \cite[Theorem 3.6]{muthiah2019bruhat}\label{Theorem Muthiah Orr}

The affinized length function and the affinized length function with integral values are strictly compatible with the affinized Bruhat order on $W^a_+$. In other words, for any $\bx\in W^a_+$ and $\beta[n]\in \Phi_+^a$,
  \begin{equation}
    \bx s_{\beta[n]} > \bx \iff \ell^a_\varepsilon(\bx s_{\beta[n]}) > \ell^a_\varepsilon(\bx) \iff \ell^a(\bx s_{\beta[n]}) > \ell^a(\bx).
\end{equation}  
In particular the affinized Bruhat order is a partial order.
\end{Theorem}
In what follows we will mostly use $\ell^a$ and rarely mention $\ell^a_\varepsilon$, we now refer to $\ell^a$ as the affinized Bruhat length.

\subsection{Geometric interpretation} \label{subsection: geometric interpretation}

We introduced everything in a very algebraic way, but there is a strong geometric intuition behind root systems, vectorial Weyl groups and the vectorial Bruhat order, developed for instance in the context of buildings in \cite{ronan1989lectures}. There is also a geometrical interpretation of the Bruhat order on $W^a_+$ which we develop in this paragraph, it takes place in the standard apartment of the masure associated to a Kac-Moody group with underlying Kac-Moody datum $\mathcal D$.
\vspace{0.5 cm}

Let $V=Y\otimes_\mathbb Z \mathbb R$, the lattice $X$ embeds in its dual $V^\vee$ and the vectorial Weyl group $W$ acts naturally on it. Inside $V$ we have the (closed) fundamental chamber $C^v_f=\{v \in V \mid \langle v,\alpha_i\rangle \geq 0\}$ and the Tits cone $\mathcal T = W\cdot C^v_f$. A \textbf{vectorial chamber} is a set of the form $w\cdot C^v_f$ for $w \in W$. Since the interior of $C^v_f$ has trivial stabiliser in $W$, the set of chambers is in natural bijection with $W$ by $w\mapsto  C^v_w:=w \cdot C^v_f$.
\vspace{0.5 cm}

To each root $\beta \in \Phi_+$ let $M_\beta=\{x \in V \mid \langle x,\beta \rangle = 0 \}$, it is an hyperplane of $V$ and, if $\beta=w(\alpha_i)$ with $\alpha_i$ a simple root, then $C^v_w\cap C^v_{ws_i}\subset M_\beta \cap \mathcal T $.  The intersection $C^v_w\cap C^v_{ws_i}$ is called the panel of type $s_i$ of $w$.

We can put a structure of simplicial complex on $\mathcal T$, for which chambers are the cells of maximal rank and panels are the cells of maximal rank within non-chambers. This simplicial complex is a realization of the \textbf{Coxeter complex} of $(W,S)$. Each wall splits the Tits cone in two parts, and separate the set of vectorial chambers in two: say that $C^v_w$ is on the positive side of $M_\beta$ if $w^{-1}(\beta)>0$. In particular since $\beta$ is a positive root, the positive side is always the one which contain the dominant chamber.

Then the vectorial Bruhat order can be interpreted by:
    $s_\beta w>w$ if and only if, when we split $\mathcal T$ along $M_\beta$ the chambers $C^v_w$ and $C^v_f$ are in the same connected component of $\mathcal T$, that is to say $C^v_w$ is on the positive side of $M_\beta$. 
    
Moreover $\Inv(w^{-1})$ the inversion set of $w^{-1}$ can be interpreted as the set of walls separating the chamber $C^v_w=w\cdot C^v_f$ from the fundamental chamber $C^v_f$.
\vspace{0.5 cm}

In Figure~\ref{fig: figure1} we represent the Tits cone and its structure for a root system of rank $2$ with Cartan matrix  $\begin{pmatrix}
        2 & -3 \\ -2 & 2
    \end{pmatrix}$, which is of indefinite type. The Tits cone is colored in blue, and the vectorial chamber $C^v_w$ is labelled by $w$. It is an approximation since $W$ is infinite.

\begin{figure}[ht]

    \centering
    \includegraphics[width=\textwidth]{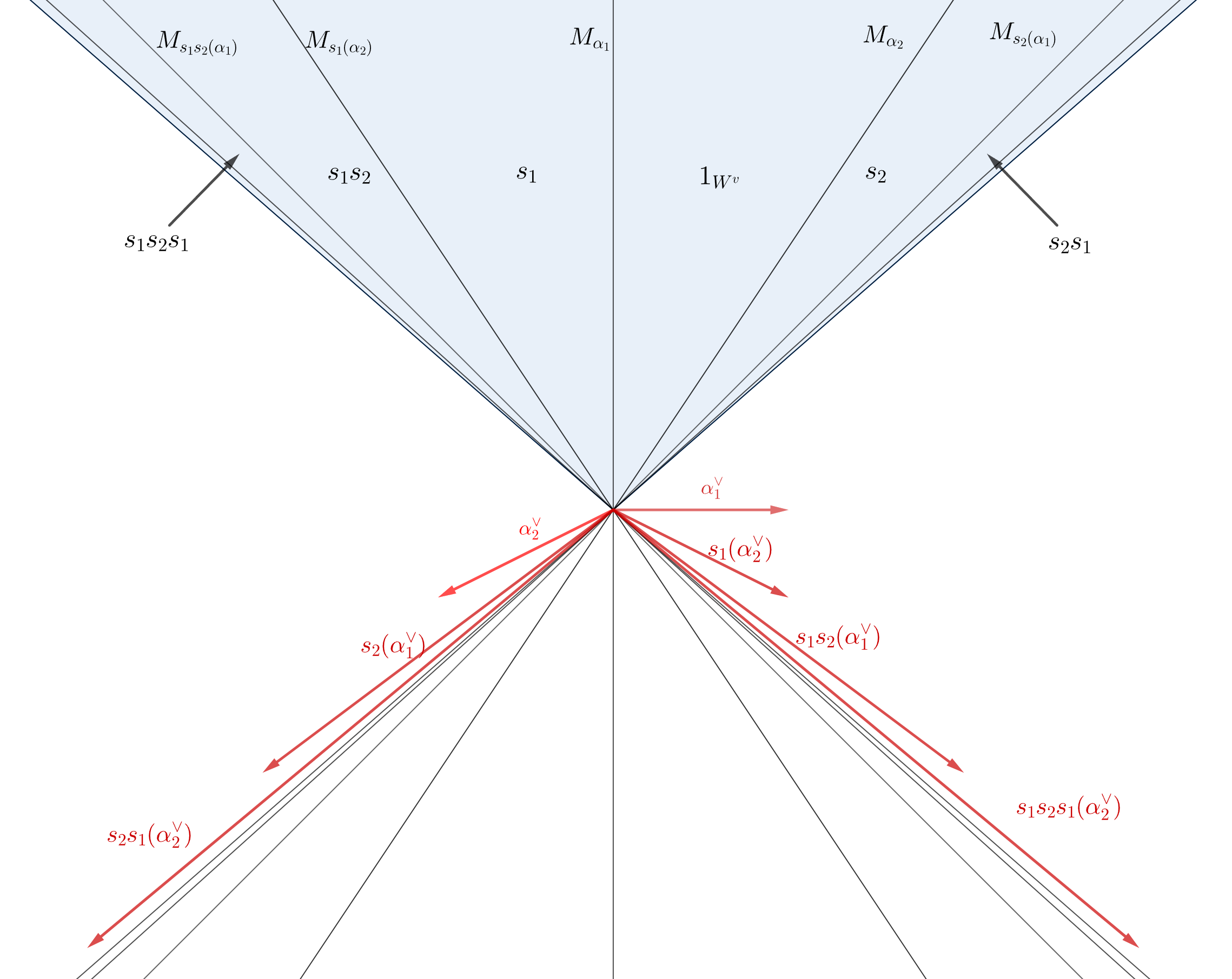}
    \caption{The Tits cone for a root system of Cartan Matrix $\begin{pmatrix}
        2 & -3 \\ -2 & 2
    \end{pmatrix}$.}
    \label{fig: figure1}

\end{figure}

\newpage

Let us now turn to the interpretation of the $W^a_+$-Bruhat order. Let $\mathbb A$ be a real affine space with direction $V$, we call $\mathbb A$ the (standard) affine apartment associated to $\mathcal D$. The tangent space of $\mathbb A$ is canonically isomorphic to $T\mathbb A = \mathbb A \times V$, with, for any $x \in \mathbb A$, $T_x\mathbb A = \{x\}\times V$.

The semigroup $W^a_+$ has an affine action on $\mathbb A$, given by $\pi^\lambda w(x)=-\lambda+w(x)$, it induces an action on the tangent space $T\mathbb A$ given by $\pi^\lambda w ((x,v))=(-\lambda+w(x),w(v))$.
To any positive affinized root $\beta[n] \in \Phi_+^a$ corresponds an affine hyperplane:
\begin{equation}
M_{\beta[n]}=\{x \in \mathbb A \mid \langle x,\beta\rangle +n=0 \},   
\end{equation}the \textbf{affine wall} associated to the affinized root $\beta[n]$. For any $x \in M_{\beta[n]}$ we have $T_xM_{\beta[n]}=\{x\} \times M_\beta \subset T_x\mathbb A$.

For any $\pi^\lambda w \in W^a_+$ let \begin{equation}
C_{\pi^\lambda w}=\{-\lambda\}\times C^v_w \subset T_{-\lambda} \mathbb A \subset T\mathbb A,    
\end{equation} we call it the \textbf{alcove of type $\pi^\lambda w$}. Mirroring the classical situation, $C_0=\{0\}\times C^v_f$ is a fundamental domain for the action of $W^a_+$ on $Y^+\times \mathcal T \subset T\mathbb A$ and $W^a_+$ acts on $\{C_{\bx}\mid \bx \in W^a_+\}$ simply transitively. Affine walls separate naturally the set of alcoves in two and we call the side containing $C_0$ the positive side.

Then the $W^a_+$-Bruhat order can be interpreted geometrically: \begin{equation}s_{\beta[n]}\pi^\lambda w >\pi^\lambda w \iff C_{\pi^{\lambda}w} \text{ is on the positive side of }M_{\beta[n]}.\end{equation}

We give an illustration of the affine apartment in Figure~\ref{fig: figure2} below.

\begin{figure}[ht]

    \centering
    \includegraphics[width=\textwidth]{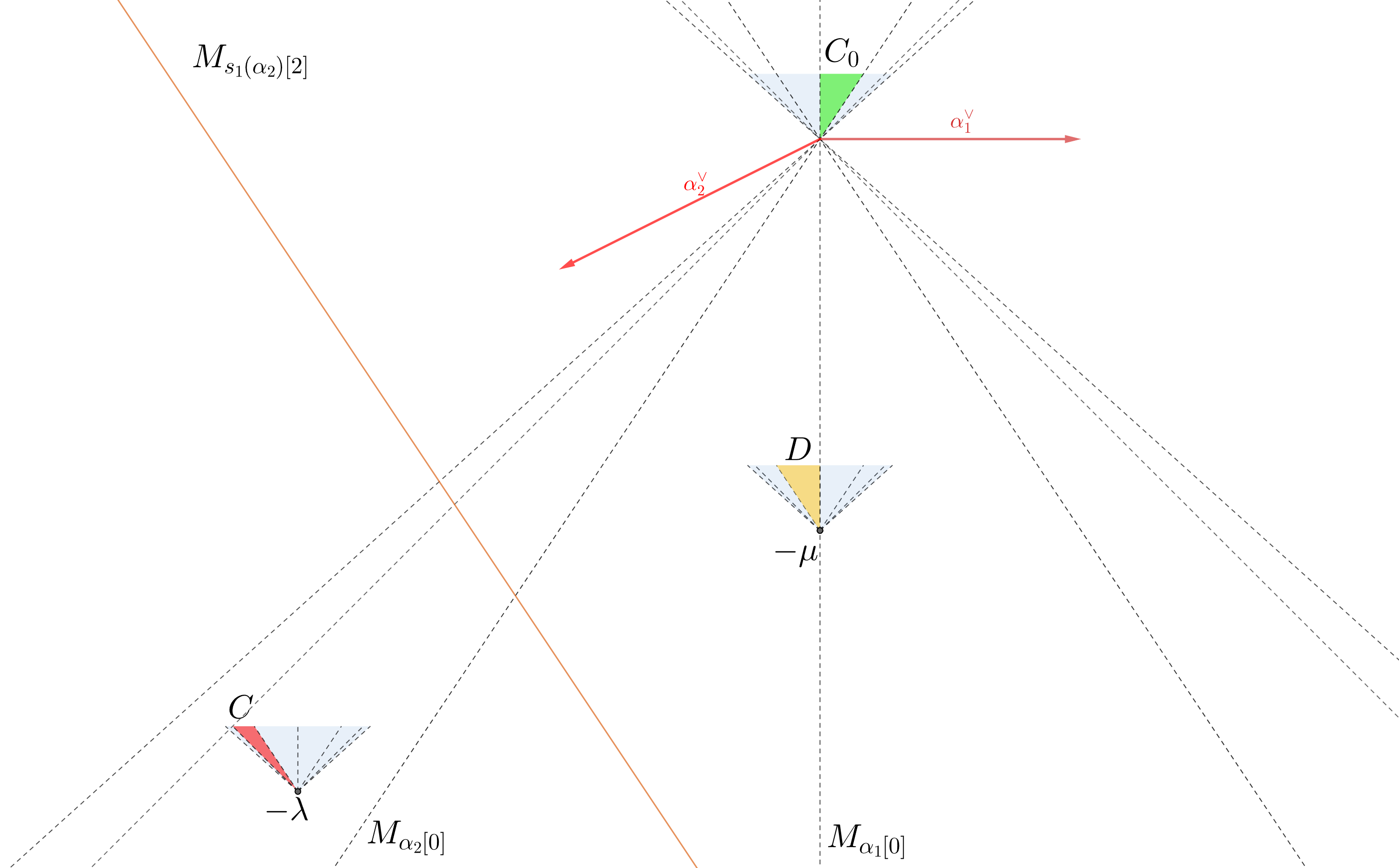}
    \caption{The affine apartment for a root system of Cartan Matrix $\begin{pmatrix}
        2 & -3 \\ -2 & 2
    \end{pmatrix}$.}
    \label{fig: figure2}
\end{figure}

\newpage

In Figure~\ref{fig: figure2} we represent the affine apartment for the same root datum as in Figure~\ref{fig: figure1}. The blue polygons represent the local Tits cones at three different points: the origin, $-\lambda\in -Y^+$ and $-\mu$, which is the image of $-\lambda$ by the reflection along the wall $M_{s_1(\alpha_2)[2]}$ (represented in green).

We have highlighted three alcoves: In green the alcove $C_0$; in red the alcove $C=C_{\pi^\lambda s_1 s_2}$ and in yellow $D=C_{\pi^{\mu} s_1}$ which is the image of $C$ by $s_{s_1(\alpha_2)[2]}$. We see that $D$ is on the same side of $M_{s_1(\alpha_2)[2]}$ as the fundamental alcove $C_{0}$, thus $\pi^\lambda s_1s_2=s_{s_1(\alpha_2)[2]}(\pi^{\mu} s_1)>\pi^{\mu} s_1$.

Note that $-\lambda$ lies in the negative vectorial chamber $-s_2 C^v_f$, that is to say that $s_2 \lambda$ is dominant. Therefore $\pi^\lambda s_2$ is the minimal length element of $\pi^\lambda W$. We will make this more explicit in Section~\ref{subsection 2.2}.

\subsubsection{Notation for segments}

For any two elements $x,y\in V=Y\otimes_\Z \R$, we introduce the following notation: 
\begin{align*}
[x,y]&=\{tx+(1-t)y\mid 0\leq t \leq 1 \} \\
]x,y[&=\{tx+(1-t)y\mid 0< t < 1 \}.\end{align*} Note in particular that, if $x\in Y$ and $y=x+n\beta^\vee$ for $n\in \Z$ and $\beta \in \Phi$, then for any $m \in \llbracket0,n\rrbracket$ we have $x+m\beta^\vee \in [x,y]\cap Y$.

 \subsection{Preliminary results} \label{subsection: prelimary results}

Since the affinized Bruhat order is generated on $W^a_+$ by the relations
$s_{\beta[n]} \bx>\bx \iff \ell^a(s_{\beta[n]}\bx)>\ell^a(\bx)$ for affinized roots $\beta[n]\in \Phi_+^a$, covers are always of this form. In the rest of the paper, we always apply affinized reflections on the left.
 \begin{Lemma}\label{Lemma : coweightconvexity}
     Let $\pi^{\lambda}w \in W^a$ and $\beta[n]\in \Phi_+^a$. Write $\pi^\mu w'$ for $s_{\beta[n]}\pi^\lambda w$ and suppose that  $(\pi^\lambda w)^{-1}(\beta[n])\in \Phi_+^a$. Then $\lambda \in [\mu,s_\beta \mu]$. In particular
     \begin{equation}
         \mu \in Y^+ \implies \lambda \in Y^+.
     \end{equation}
 \end{Lemma}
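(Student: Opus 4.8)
The idea is to compute $\mu$ and $s_\beta\mu$ explicitly in terms of $\lambda$, translate ``$\lambda\in[\mu,s_\beta\mu]$'' into an inequality on the single scalar $\langle\lambda,\beta\rangle$, and then read off that inequality from the hypothesis $(\pi^\lambda w)^{-1}(\beta[n])\in\Phi^a_+$ via Formula \eqref{eq: leftaction}. Since $s_{\beta[n]}=\pi^{n\beta^\vee}s_\beta$ and $s_\beta(\lambda)=\lambda-\langle\lambda,\beta\rangle\beta^\vee$, a direct computation in the semi-direct product gives
\[
\pi^\mu w' = s_{\beta[n]}\pi^\lambda w = \pi^{n\beta^\vee+s_\beta(\lambda)}s_\beta w,\qquad\text{so}\qquad \mu=\lambda+(n-\langle\lambda,\beta\rangle)\beta^\vee,
\]
and, applying $s_\beta$ once more (using $\langle\beta^\vee,\beta\rangle=2$), $s_\beta\mu=\lambda-n\beta^\vee$. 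Hence $\lambda-\mu=(\langle\lambda,\beta\rangle-n)\beta^\vee$ and $\lambda-s_\beta\mu=n\beta^\vee$ are both colinear to $\beta^\vee\neq 0$, so $\lambda$ lies on the segment $[\mu,s_\beta\mu]$ if and only if these two vectors point in opposite directions (or one of them vanishes), i.e. if and only if
\[
n\,(n-\langle\lambda,\beta\rangle)\ \geq\ 0.
\]

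It remains to deduce this inequality from the hypothesis. Everything in sight ($s_{\beta[n]}$, the affine root $\beta[n]$, and $s_\beta$) is unchanged under $(\beta,n)\mapsto(-\beta,-n)$, so I may assume $\beta\in\Phi_+$; then Formula \eqref{eq: leftaction} applies for every $n\in\Z$ and reads $(\pi^\lambda w)^{-1}(\beta[n])=\sgn(n)w^{-1}(\beta)+(|n|-\sgn(n)\langle\lambda,\beta\rangle)\pi$. I split into cases on $\sgn(n)$. If $n>0$, membership in $\Phi^a_+$ forces $n-\langle\lambda,\beta\rangle\geq 0$, hence $n(n-\langle\lambda,\beta\rangle)\geq 0$. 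If $n<0$, it forces $|n|-\sgn(n)\langle\lambda,\beta\rangle=-n+\langle\lambda,\beta\rangle\geq 0$, i.e. $\langle\lambda,\beta\rangle-n\geq 0$, and multiplying by $-n>0$ again gives $n(n-\langle\lambda,\beta\rangle)\geq 0$. If $n=0$ the inequality is trivial (indeed $s_\beta\mu=\lambda$ there). This proves $\lambda\in[\mu,s_\beta\mu]$.

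For the last assertion, recall that $Y^+=\cT\cap Y$ where $\cT=W^v.C^v_0$ is the (real) Tits cone, a convex cone of $V=Y\otimes_\Z\R$ stable under $W^v$. If $\mu\in Y^+$, then $s_\beta\mu\in Y^+\subset\cT$ as well, so by convexity $[\mu,s_\beta\mu]\subset\cT$; since $\lambda\in Y$, we get $\lambda\in\cT\cap Y=Y^+$. I do not expect any real obstacle here: the only delicate point is the sign bookkeeping with the convention $\sgn(0)=+1$ and the domain of validity of \eqref{eq: leftaction} (it fails precisely for $\beta\in\Phi_-,\ n=0$), which is exactly why the reduction to $\beta\in\Phi_+$ is made at the outset.
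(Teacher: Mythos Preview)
Your proof is correct and follows essentially the same approach as the paper: both compute $\mu=\lambda+(n-\langle\lambda,\beta\rangle)\beta^\vee$ and $s_\beta\mu=\lambda-n\beta^\vee$, extract from Formula~\eqref{eq: leftaction} the inequality $\sgn(n)(n-\langle\lambda,\beta\rangle)\geq 0$ (equivalently your $n(n-\langle\lambda,\beta\rangle)\geq 0$), and conclude by convexity and $W^v$-stability of $Y^+$. The only cosmetic difference is that you phrase the segment condition as a single product inequality and do an explicit case split on $\sgn(n)$, whereas the paper states directly that $n$ and $n-\langle\lambda,\beta\rangle$ have the same sign.
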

 \begin{proof}
     Explicitly: $$\pi^\mu w'= \pi^{n\beta^\vee}s_\beta.\pi^\lambda w=\pi^{s_\beta \lambda +n\beta^\vee}s_\beta w.$$
     Thus $\mu = s_\beta \lambda + n\beta^\vee=\lambda+(n-\langle \lambda,\beta\rangle)\beta^\vee$ and $s_\beta \mu = \lambda - n\beta^\vee$.

     Moreover, since $(\pi^\lambda w)^{-1}(\beta[n]) \in \Phi_+^a$, by Formula~\eqref{eq: leftaction}: $$|n|-\sgn(n)\langle \lambda,\beta\rangle = \sgn(n)(n-\langle \lambda,\beta\rangle) \geq 0.$$

     Therefore, unless $n-\langle \lambda,\beta\rangle=0$, $n$ and $n-\langle \lambda,\beta\rangle$ have same sign and thus, $\lambda = s_{\beta}\mu+n\beta^\vee=\mu-(n-\langle \lambda,\beta\rangle)\beta^\vee$ lies in $[s_{\beta}\mu,\mu]$. If $n-\langle \lambda,\beta\rangle=0$ then $\mu=\lambda$ and the result remains true.

    The Tits cone $\mathcal T$ is convex (\cite[Proposition 1.4.2c)]{kumar2002kac}) and $W$-stable, so if $\mu\in \mathcal T$, then $[\mu,s_\beta \mu]$ is contained in $\mathcal T$ for any $\beta\in\Phi$. Therefore in the situation above, if $\mu\in Y^+=\mathcal T\cap Y$, then $\lambda \in [\mu,s_\beta\mu]\cap Y \subset \mathcal T\cap Y=Y^+$, and thus $\mu \in Y^+ \implies \lambda \in Y^+$.         
 \end{proof}
 We directly obtain from Lemma~\ref{Lemma : coweightconvexity} the following result.
 \begin{Proposition}
     The affinized Bruhat order defined on $W^a_+$ coincides with the restriction of the preorder defined through~\eqref{eq : Bruhat order} on the whole semidirect product $W^a$.
 \end{Proposition}

\subsubsection{Properties of the height function}

 We give here a few elementary results on the height function, which will be useful in our study of the affinized Bruhat length. They are also used in \cite[Section 3]{muthiah2019bruhat}.
\begin{Proposition} \label{SubLemma: height}
    For any $w\in W$, \begin{equation}\rho-w^{-1}(\rho)=\sum\limits_{\gamma\in \Inv(w)}\gamma.\end{equation}
\end{Proposition}
\begin{proof}
    This is \cite[1.3.22 Corollary 3]{kumar2002kac}, we prove it by induction on the length of $w$.
    \begin{enumerate}
    \item If $w$ is a simple reflection $s_\alpha$ then $\Inv(s_\alpha)=\{\alpha\}$ and $\rho-s_\alpha(\rho)=\langle \alpha^\vee,\rho\rangle\alpha=\alpha$ since $\langle \alpha^\vee,\rho\rangle=1$ by definition of $\rho$.
    \item Suppose the result true for elements of length $n$, and suppose that $\ell(w)=n+1$ then write $w=w_1s_\alpha$ for $\alpha$ a simple root and $w_1$ an element of length $n$. Then $$\rho-w(\rho)=\rho-w_1(\rho)+w_1(\rho-s_\alpha(\rho))=\sum\limits_{\gamma\in \Inv(w_1^{-1})}\gamma +w_1(\alpha)$$ and since $\Inv(w^{-1})=\Inv(w_1^{-1})\sqcup\{w_1(\alpha)\}$ we get the result for $w$.
    
    \end{enumerate}
\end{proof}
 \begin{Corollary}\label{Lemma : height as sum}
    
    For any positive root $\beta\in \Phi_+$ we have
    \begin{equation} 2\htt(\beta^\vee)=\sum\limits_{\gamma \in \Inv(s_\beta)} \langle \beta^\vee,\gamma\rangle.\end{equation}Moreover all the terms in the sum are positive.
 \end{Corollary}
 \begin{proof}
     Let $\beta\in \Phi_+$ be a positive root. Note that $-s_\beta(\beta^\vee)=\beta^\vee$ and thus $\langle \beta^\vee,\rho\rangle=\langle -s_\beta(\beta^\vee),\rho\rangle=\langle \beta^\vee,-s_\beta(\rho)\rangle$. Therefore by Proposition~\ref{SubLemma: height},
    \begin{equation*}
        2\htt(\beta^\vee)=2\langle \beta^\vee,\rho\rangle=\langle \beta^\vee,\rho-s_\beta(\rho)\rangle=\sum\limits_{\gamma\in \Inv(s_\beta)}\langle \beta^\vee,\gamma\rangle.
    \end{equation*} Moreover for any $\gamma\in\Inv(s_\beta)$, by definition $\gamma\in\Phi_+$ and $s_\beta(\gamma)=\gamma-\langle\beta^\vee,\gamma\rangle\beta^\vee\in\Phi_-$ so, since $\beta$ is a positive root, the coefficient $\langle\beta^\vee,\gamma\rangle$ is necessarily positive. 
\end{proof}

\begin{Corollary} \label{Lemma : height of general coweight}
    Let $\mu \in Y^+$ and $u\in W$ be such that $\mu=u(\mu^{++})$. Then  \begin{equation} \htt(\mu^{++})=\htt(\mu)-\sum\limits_{\tau \in \Inv(u^{-1})}\langle \mu,\tau\rangle. \end{equation}
    Moreover the terms in this sum are non-positive and
    \begin{equation}\label{eq : heightdecreasing}
        \htt(\mu)\leq \htt(\mu^{++}).
    \end{equation}The inequality is strict unless $\mu$ is dominant.
\end{Corollary}
    \begin{proof}
        By definition $\htt(\mu^{++})=\langle u^{-1}(\mu),\rho\rangle = \langle \mu,u(\rho)\rangle$, and by Proposition~\ref{SubLemma: height}:
        \begin{align*}
            \htt(\mu^{++}) =\langle \mu,u(\rho)\rangle =\langle \mu,\rho-\sum\limits_{\tau\in\Inv(u^{-1})}\tau\rangle  = \htt(\mu)-\sum\limits_{\tau\in\Inv(u^{-1})}\langle \mu,\tau\rangle. 
        \end{align*}
        Moreover, for any $\tau \in \Phi$, we have $\langle \mu,\tau\rangle = \langle \mu^{++},u^{-1}(\tau)\rangle$, so $\tau \in \Inv(u^{-1})\implies \langle \mu,\tau\rangle \leq 0$ and the terms of the above sum are all non-positive, we deduce Formula~\eqref{eq : heightdecreasing}. If $\mu$ is not dominant, then there exists $\tau\in \Phi_+$ such that $\langle \mu,\tau\rangle <0$, and thus
        $$\htt(\mu)<\htt(s_\tau \mu)\leq \htt(\mu^{++}).$$
    \end{proof}

Amongst other things, Corollary~\ref{Lemma : height of general coweight} directly implies the following result, which was first indicated to the author by Hébert and Muthiah.
\begin{Lemma}\label{Lemma : heightdecreasing}
    Let $\lambda \in Y^+$ and $\beta\in \Phi_+$ such that $s_\beta \lambda\neq \lambda$. Suppose that $\mu \in ]\lambda,s_\beta\lambda[$. Then $\htt(\mu^{++})<\htt(\lambda^{++})$.
\end{Lemma}
\begin{proof}Note that we do not suppose $\mu \in Y$, the height function is extended to $V=Y\otimes_\mathbb Z \mathbb R$ linearly.
    Let $t\in ]0,1[$ be such that $\mu=t\lambda+(1-t)s_\beta\lambda$, let $v\in W$ be such that $\mu^{++}=v\mu$. Then $\htt(\mu^{++})=\htt(v\mu)=t\htt(v\lambda)+(1-t)\htt(vs_\beta\lambda)$. By Corollary~\ref{Lemma : height of general coweight}, $\htt(v\lambda)\leq \htt(\lambda^{++})$ and $\htt(vs_\beta \lambda)\leq \htt(\lambda^{++})$ and, since $s_\beta\lambda\neq \lambda$, at least one of the two inequality is strict. We deduce $\htt(\mu^{++})<\htt(\lambda^{++})$.
\end{proof}

\begin{Proposition}\label{Proposition : reflectionsamedominance}
    Let $\bx \in W^a_+$ and $\beta[n]\in \Phi_+^a$ such that $s_{\beta[n]}\bx\in W^a_+$. Then
    \begin{equation}\pr^{Y^{++}}(s_{\beta[n]}\bx) =\pr^{Y^{++}}(\bx) \iff n \in \{0, \langle \pr^{Y^+}(\bx),\beta\rangle \}. \end{equation}
\end{Proposition}
\begin{proof}
    To simplify notation, let $\lambda\in Y^+$ denote $\pr^{Y^+}(\bx)$. If $n\in \{0, \langle \lambda,\beta\rangle \}$ then by Formula~\eqref{eq : productdominanceclass}, $\pr^{Y^+}(s_{\beta[n]}\bx) \in \{s_\beta(\lambda),\lambda\}$ and therefore it has same dominance class. 
    
    Conversely, if $n \in \rrbracket 0, \langle \lambda,\beta\rangle \llbracket$ then $\pr^{Y^+}(s_{\beta[n]}\bx)=s_\beta(\lambda)+n\beta^\vee\in ]\lambda,s_\beta(\lambda)[$, and if $n\notin \llbracket 0,\langle \lambda,\beta\rangle\rrbracket$ then $\lambda\in ]s_\beta(\lambda)+n\beta^\vee,\lambda-n\beta^\vee[=]\pr^{Y^+}(s_{\beta[n]}\bx),s_\beta(\pr^{Y^+}(s_{\beta[n]}\bx))[$. 
    
    Either way by Lemma~\ref{Lemma : heightdecreasing}, $\htt(\pr^{Y^{++}}(s_{\beta[n]}\bx))\neq \htt(\pr^{Y^{++}}(\bx))$ and in particular $\pr^{Y^{++}}(s_{\beta[n]}\bx)\neq \pr^{Y^{++}}(\bx)$.
\end{proof}
\begin{Remark}\label{Remark: vectorial reflections}
    Note that if $n=\langle \lambda,\beta\rangle$, then by Formula~\eqref{eq : productdominanceclass}, $s_{\beta[n]}\pi^\lambda w = \pi^\lambda w s_{w^{-1}(\beta)}$. Therefore Proposition~\ref{Proposition : reflectionsamedominance} indicates that, if $\by=s_{\beta[n]}\bx$, then $\pr^{Y^{++}}(\by)=\pr^{Y^{++}}(\bx)$ if and only if $\by$ is obtained from $\bx$ by applying a vectorial reflection either on the left hand side or on the right hand side. This justifies the terminology for vectorial covers and properly affine covers.
\end{Remark}
\begin{Proposition}\label{prop: heightdecreasing}
Let $\bx,\by\in W^a_+$ and suppose that $\bx\leq \by$. Then
\begin{equation}\htt(\pr^{Y^{++}}(\bx))\leq \htt(\pr^{Y^{++}}(\by)),\end{equation}
with equality if and only if $\pr^{Y^{++}}(\bx)=\pr^{Y^{++}}(\by)$. 

In particular the function $\htt\circ \pr^{Y^{++}}: W^a_+\rightarrow \mathbb Z$ is order-preserving.
\end{Proposition}
\begin{proof}
    It is enough to prove it for cover relations, if $\by=s_{\beta[n]}\bx$ for some $\beta[n]\in \Phi_+^a$. In that case, by Lemma~\ref{Lemma : coweightconvexity} we have $\pr^{Y^+}(\bx)\in [\pr^{Y^+}(\by),s_\beta(\pr^{Y^+}(\by))]$. If $\pr^{Y^+}(\bx)\in \{\pr^{Y^+}(\by),s_\beta\pr^{Y^+}(\by)\}$ then they have the same dominance class: $\pr^{Y^{++}}(\bx)=\pr^{Y^{++}}(\by)$ and we obtain the equality case.
    
    Otherwise, $\pr^{Y^+}(\bx)\in ]\pr^{Y^+}(\by),s_\beta(\pr^{Y^+}(\by))[$, necessarily $s_\beta(\pr^{Y^+}(\by))\neq \pr^{Y^+}(\by)$ and by Lemma~\ref{Lemma : heightdecreasing} we deduce $\htt(\pr^{Y^{++}}(\bx))<\htt(\pr^{Y^{++}}(\by))$.
\end{proof}
\begin{Corollary}\label{Corollary : convexity of dominance class}
    For any $\lambda^{++}\in Y^{++}$, the set $\{\bx\in W^a_+\mid \pr^{Y^{++}}(\bx)=\lambda^{++}\}$ is convex for the affinized Bruhat order.
\end{Corollary}

\begin{proof}
    By Proposition~\ref{prop: heightdecreasing} the function $W^a_+\rightarrow \mathbb Z:\, \bx\mapsto \htt\circ \pr^{Y^{++}}$ is compatible with the affinized Bruhat order. Suppose that $\bx,\by\in W^a_+$ satisfy $\pr^{Y^++}(\bx)=\pr^{Y^{++}}(\by)$ and $\bx\leq\by$. Let $\bz\in W^a_+$ be such that $\bx\leq\bz\leq\by$. Then by Proposition~\ref{prop: heightdecreasing}, $\htt(\pr^{Y^{++}}(\bx))\leq\htt(\pr^{Y^{++}}(\bz))\leq\htt(\pr^{Y^{++}}(\by))=\htt(\pr^{Y^{++}}(\bx))$. By the equality case in Proposition~\ref{prop: heightdecreasing}, we deduce $\pr^{Y^{++}}(\bz)=\pr^{Y^{++}}(\bx)$. 
\end{proof}
\begin{Remark}
    Note that, for $\lambda\in Y^+$, the set $\{\bx\in W^a_+\mid \pr^{Y^{++}}(\bx)=\lambda^{++}\}$ is the double $W$-orbit of $\pi^{\lambda}$: \begin{equation}\{\bx\in W^a_+\mid \pr^{Y^{++}}(\bx)=\lambda^{++}\}=W\pi^\lambda W.\end{equation}

    We show in Section~\ref{Section: constant dominance class} that the right $W$-orbits $\pi^\lambda W$ are also convex for the affinized Bruhat order.  
\end{Remark}
We end this section with several metric properties of Coxeter groups, the results stated are proved in the context of Coxeter complexes and buildings in \cite{ronan1989lectures}.
\subsubsection{Metric properties of Coxeter groups} On any Coxeter group $(W_0,S_0)$ we define a map $d \;: \; W_0\times W_0 \rightarrow W_0$ by $d(v,w)=v^{-1}w$, called the \textbf{vectorial distance} of $W_0$, it is $W_0$-invariant: $d(uv,uw)=d(v,w)$ for any $u,v,w \in W_0$. We also define $d^\mathbb N = \ell \circ d$ where $\ell$ is the Bruhat length on $(W_0,S_0)$ (note that $\ell$ and $d^\mathbb N$ depend on the set of simple reflections $S_0$, but the vectorial distance does not). These maps have properties analogous to the standard distance axioms, which justify the name (see \cite[Chapter 3 \S 1]{ronan1989lectures}).

An unfolded gallery (resp. a gallery) in $W_0$ from $w$ to $v$ is a sequence $w=w_0,\dots,w_n=v$ such that $d^{\mathbb N}(w_i,w_{i+1})=1$ (resp. $d^{\mathbb N}(w_i,w_{i+1})\in \{0,1\}$) for all $i\in \llbracket0,n-1\rrbracket$. A gallery is said to be \text{minimal} if its length $n$ is equal to $d^\mathbb N(w_1,w_n)$, a minimal gallery is necessarily unfolded. We refer to \cite[Chapter 2]{ronan1989lectures} for properties of minimal galleries, but note that if $(w_0,\dots,w_n)$ is a minimal gallery then $d^\mathbb N(w_0,w_i)=i$ and thus $(w_0,\dots,w_i)$ is a minimal gallery from $w_0$ to $w_i$. Moreover since the distance is $W_0$ invariant, $(vw_0,\dots vw_n)$ is also a minimal gallery for any $v\in W_0$.
The next Lemma is a reformulation of \cite[Proposition 2.8]{ronan1989lectures}.
\begin{Lemma}\label{Lemma : non_minimal_distance}
Let $(W_0,S_0)$ be a Coxeter system and let $v_1,v_2,w \in W_0$ be such that $v_2$ is not on a minimal gallery from $v_1$ to $w$. Then there is a reflection $r\in W_0$ such that $d(v_1,rw)>d(v_1,w)$ and $d(v_2,rw)<d(v_2,w)$.\end{Lemma}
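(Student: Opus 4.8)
The statement to prove is Lemma \ref{Lemma : non_minimal_distance}: given $v_1, v_2, w$ in a Coxeter group $W$ with $v_2$ not lying on any minimal gallery from $v_1$ to $w$, find a reflection $r$ with $d^W(v_1, rw) > d^W(v_1, w)$ and $d^W(v_2, rw) < d^W(v_2, w)$. My plan is to translate this into the language of inversion sets and walls separating chambers in the Coxeter complex, which is where the hypothesis "not on a minimal gallery" has a clean characterization. Recall that a chamber $v_2$ lies on a minimal gallery from $v_1$ to $w$ if and only if the set of walls separating $v_1$ from $w$ is the disjoint union of the walls separating $v_1$ from $v_2$ and the walls separating $v_2$ from $w$; equivalently $d^{\mathbb N}(v_1, w) = d^{\mathbb N}(v_1, v_2) + d^{\mathbb N}(v_2, w)$. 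So the hypothesis says there is a wall $M$ that separates $v_1$ from $v_2$ and also separates $v_2$ from $w$, i.e. $v_2$ is on the opposite side of $M$ from both $v_1$ and $w$ (which therefore lie on the same side of $M$).

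\textbf{Key steps.}
First I would set $r = s_\gamma$ to be the reflection in such a wall $M = M_\gamma$, chosen with $\gamma$ a positive root relative to the chamber $v_1$ (after translating by $v_1^{-1}$ we may assume $v_1 = 1$, using $W$-invariance of $d^W$ and of the separation relation). Concretely: $\gamma \in \Phi_+$, $\gamma \notin \Inv(w^{-1})$ means $M_\gamma$ does \emph{not} separate $1$ from $w$... wait — I need $M_\gamma$ to separate $v_2$ from $w$ and from $v_1=1$, so I want $\gamma \in \Inv(v_2^{-1})$ and $\gamma \in \Inv(d^W(v_2,w)^{-1})$ appropriately transported; the bookkeeping is to choose $\gamma \in \Phi_+$ with $v_2^{-1}(\gamma) \in \Phi_-$ and $w^{-1}(\gamma) \in \Phi_+$, such a $\gamma$ existing precisely because $v_2$ is not on a minimal gallery from $1$ to $w$ (otherwise every wall inverting $v_2$ would also invert $w$). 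Then for $r = s_\gamma$: acting on the left, $rw = s_\gamma w$, and $\Inv((s_\gamma w)^{-1})$ relates to $\Inv(w^{-1})$ by adding $\gamma$ (since $\gamma \notin \Inv(w^{-1})$ and $s_\gamma w > w$... ), giving $d^W(1, s_\gamma w) = d^{\mathbb N}(w) + (\text{something} \geq 1) > d^W(1,w)$ — here I'd use the standard fact that $\ell(s_\gamma w) > \ell(w) \iff w^{-1}(\gamma) \in \Phi_+$, combined with $\ell(s_\gamma w) \neq \ell(w)$ for a reflection (lengths differ by an odd number). That handles $d^W(v_1, rw) > d^W(v_1, w)$. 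For the second inequality, $d^W(v_2, rw) = d^W(v_2, s_\gamma w)$: since $v_2^{-1}(\gamma) \in \Phi_-$, the wall $M_\gamma$ separates $v_2$ from $1$, hence reflecting $w$ (which is on the far side, same side as $1$) across $M_\gamma$ moves it strictly closer to $v_2$ — formally $\ell(v_2^{-1} s_\gamma w) < \ell(v_2^{-1} w)$ because $(v_2^{-1} w)^{-1}$-inverts... I'd verify this via the same length criterion applied after conjugating: $s_\gamma w$ versus $w$ as seen from $v_2$, using $d^W(v_2, s_\gamma w) = v_2^{-1} s_\gamma w = (s_{v_2^{-1}(\gamma)}) (v_2^{-1} w)$ and $s_{v_2^{-1}(\gamma)} = s_{-v_2^{-1}(\gamma)}$ with $-v_2^{-1}(\gamma) \in \Phi_+$, so I want $(v_2^{-1}w)^{-1}(-v_2^{-1}(\gamma)) = -w^{-1}(\gamma) \in \Phi_-$, which holds since $w^{-1}(\gamma) \in \Phi_+$. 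Hence the length drops.

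\textbf{Alternative and the main obstacle.}
Alternatively, since the excerpt explicitly says this Lemma "is another formulation of \cite[Proposition 2.8]{ronan1989lectures}", the cleanest route is to simply cite Ronan and spell out the dictionary: Ronan's Proposition 2.8 is the statement that if $v_2$ is not on a minimal gallery between $v_1$ and $w$ then there is a wall $M$ separating $v_2$ from both $v_1$ and $w$; one then takes $r$ to be the reflection fixing $M$, and the two distance inequalities are immediate from the gallery-counting interpretation of $d^{\mathbb N}$ (reflecting $w$ across $M$ adds the wall $M$ to those separating it from $v_1$, since $M$ didn't separate them before, and removes $M$ from those separating it from $v_2$, since $M$ did separate them). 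The main obstacle I anticipate is purely the sign/side bookkeeping: making sure the chosen root $\gamma$ is positive in the right frame of reference and that "left reflection" conventions match the inversion-set formulas $\alpha \in \Inv(w) \iff ws_\alpha < w$ and $\ell(s_\gamma w) > \ell(w) \iff w^{-1}(\gamma) > 0$ used in the paper, and carefully treating the degenerate cases where $v_2 = v_1$ or $v_2 = w$ (in which case the hypothesis fails and there is nothing to prove, or the Lemma is vacuous) versus the genuine case. None of this is deep, but it is the kind of place where a sign error would invalidate the construction, so I would write it out with explicit reference to Formula \eqref{eq: leftaction} and the inversion-set criterion quoted from \cite[1.3.13]{kumar2002kac}.
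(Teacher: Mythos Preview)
Your proposal is correct and follows essentially the same approach as the paper: both identify, via \cite[Proposition 2.8]{ronan1989lectures}, a wall separating $v_2$ from both $v_1$ and $w$, and take $r$ to be the reflection in that wall. The paper phrases the two distance inequalities via the folding $\rho_\alpha$ onto the half-apartment containing $v_1$ and $w$ (foldings are distance-nonincreasing), whereas you verify them directly with the inversion-set criterion $\ell(s_\gamma u)>\ell(u)\iff u^{-1}(\gamma)\in\Phi_+$ after reducing to $v_1=1$; these are two renderings of the same computation, and your ``alternative'' paragraph is essentially the paper's proof verbatim.
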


\begin{proof}

    If $v_2$ is not on a minimal gallery from $v_1$ to $w$, by \cite[Proposition 2.8]{ronan1989lectures} there exists a root $\alpha$ (seen as a half-apartment: $\alpha=\{u\in W_0 \mid \ell(u)<\ell(s_\alpha u)\}$) such that $v_1,w \in \alpha$ and $v_2\notin \alpha$. Then consider the folding along $\alpha$, defined by:
    \begin{equation*}
  \forall u\in W_0,\;\rho_\alpha (u) =
  \begin{cases}
    s_\alpha u & \text{if $u \notin \alpha$} \\
    u & \text{otherwise}.
  \end{cases}
\end{equation*}
It reduces the vectorial distance (see \cite[\S 2]{ronan1989lectures}) hence
\begin{align*}d(v_1,w)&=d(\rho_\alpha(v_1),\rho_\alpha(s_\alpha w))<d(v_1,s_\alpha w), \\ 
d(v_2,s_\alpha w)&=d(s_\alpha v_2,w)=d(\rho_\alpha(v_2),\rho_\alpha(w))<d(v_2,w).\end{align*}
\end{proof}
Recall that for $J\subset S$, $W_J$ is the subgroup generated by the set of simple reflections $J$. The Coxeter system $(W_J,J)$ is an example of Coxeter system for which we will use Lemma~\ref{Lemma : non_minimal_distance}. For any $w\in W$, the coset $wW_J$ is convex, in the sense that, if $w_1,w_2\in wW_J$ then any minimal gallery from $w_1$ to $w_2$ lies in $wW_J$ (see \cite[Lemma 2.10]{ronan1989lectures}).
\begin{Definition}\label{def: projparabolic}
    For any $J\subset S$ and $v,w\in W$, the \textbf{projection of $w$ on $vW_J$} is the unique element of $vW_J$ which reaches $\min\limits_{\tilde v\in vW_J}d^\mathbb N(w,\tilde v)$. It is denoted $\pr_{vW_J}(w)$. Moreover any minimal gallery from $v$ to an element of $wW_J$ goes through $\pr_{vW_J}(w)$ (see \cite[Theorem 2.10]{ronan1989lectures}).
\end{Definition}

\section{Restriction to constant dominance classes}\label{Section: constant dominance class}

In this section, we study the affinized Bruhat order restricted to a dominance class, that is to say, for a given $\lambda^{++} \in Y^{++}$, we study the restriction of the affinized Bruhat order to the subset $(\pr^{Y^{++}})^{-1}(\lambda^{++})=W\pi^{\lambda^{++}}W$. By Corollary~\ref{Corollary : convexity of dominance class} these are convex subsets for the affinized Bruhat order. We start by showing that, for any $\lambda \in Y^+$, the subset $\pi^\lambda W=(\pr^{Y^+})^{-1}(\lambda)$ of $(\pr^{Y^{++}})^{-1}(\lambda^{++})$ is also convex for the affinized Bruhat order.

\begin{Lemma}\label{Lemma : Minimal inversion set}
    Let $\lambda \in Y^+$, recall Notation~\ref{Notation : paraboliccosets} for $v^\lambda$. Then $\Inv((v^\lambda)^{-1})\cap \Phi_\lambda =\emptyset$. In particular for any $\beta\in \Phi_+$, \begin{equation}\label{eq : minimal inversion set}\htt(\lambda)<\htt(s_\beta \lambda)\iff \langle\lambda,\beta\rangle<0\iff s_\beta v^\lambda<v^\lambda.\end{equation}
\end{Lemma}
\begin{proof}
    
Let $\lambda \in Y^+$ and $\alpha \in \Inv((v^\lambda)^{-1}) \cap \Phi_\lambda$, then since $\alpha \in \Phi_\lambda$, $s_\alpha$ fixes $\lambda$, that is $s_\alpha \in W_\lambda$. Moreover $(v^\lambda)^{-1}(\alpha)<0$ so $s_\alpha v^\lambda < v^\lambda$, this contradicts the minimality of $v^\lambda$ (note that, as $W_\lambda v^\lambda =v^\lambda W_{\lambda^{++}}$, $v^\lambda$ is also the minimal representative for the right coset $W_\lambda v^\lambda$). Hence $\Inv((v^\lambda)^{-1})\cap\Phi_\lambda = \emptyset$, therefore any $\beta\in \Inv((v^\lambda)^{-1})$ verifies $\langle \lambda,\beta\rangle \neq 0$.

For $\beta\in \Phi_+$, $\langle \lambda,\beta\rangle =\langle \lambda^{++},(v^{\lambda})^{-1}(\beta)\rangle$. Since $\lambda^{++}$ is dominant, if this is negative then $\beta \in \Inv((v^{\lambda})^{-1})$, and since $\Inv((v^{\lambda})^{-1})$ and $\Phi_\lambda$ are disjoint, the converse is also true. Since $\beta\in \Inv((v^\lambda)^{-1})\iff s_\beta v^\lambda <v^\lambda$ we deduce the second equivalence in Formula~\eqref{eq : minimal inversion set}. Moreover $\htt(s_\beta \lambda)=\htt(\lambda)-\langle \lambda,\beta\rangle \htt(\beta^\vee)$ by linearity of the height function, and since $\htt(\beta^\vee)>0$, the first equivalence in Formula~\eqref{eq : minimal inversion set} is clear.
\end{proof}
\begin{Remark}
    The fact that $\Inv((v^\lambda)^{-1}) \cap \Phi_\lambda = \emptyset$ is visible geometrically in the Coxeter complex of $W$, in which $\Phi_\lambda$ is the set of walls containing $\lambda$, $\Inv(v^{-1})$ is the set of walls separating $C^v_f$ and $C^v_v$. The chamber $C^v_{v^\lambda}$ is the closest chamber from the fundamental chamber amongst the chambers containing $\lambda$ in their closure, in other words $v^\lambda = \pr_{W_\lambda}(1_{W})$. 
\end{Remark}

\begin{Proposition}\label{affine_to_relative0}

    Suppose that $\pi^\lambda w\in W^a_+$ and $r\in W$ is a reflection such that $r\lambda \neq \lambda$. Then
    \begin{equation}\label{eq : affinetorelative0}
        \pi^{r\lambda}rw>\pi^\lambda w \iff rv^\lambda<v^\lambda.
    \end{equation}
Moreover for any $\lambda^{++}\in Y^{++}$, the restriction of the function $\htt \circ \pr^{Y^+}$ to $(\pr^{Y^{++}})^{-1}(\lambda^{++})$ is order-preserving.
\end{Proposition}
\begin{proof}
 
Suppose that $r\in W$ is a reflection which does not fix $\lambda$. By definition there exists a positive root $\beta\in\Phi_+$ such that $r=s_\beta$ and, since $r$ does not fix $\lambda$, $\langle \lambda,\beta\rangle \neq0$. Note that $\pi^{r\lambda}rw=s_{\beta[0]}\pi^\lambda w$ so, using Formula~\eqref{eq: leftaction}, we have
    $$\pi^{r\lambda}rw>\pi^\lambda w\iff -\langle \lambda ,\beta\rangle > 0\iff \langle \lambda,\beta\rangle <0.$$
       By Lemma~\ref{Lemma : Minimal inversion set} this is equivalent to $rv^\lambda<v^\lambda$, and to $\htt(\lambda)<\htt(r\lambda)$. This is enough to obtain Equivalence~\eqref{eq : affinetorelative0}. Moreover by convexity of $(\pr^{Y^{++}})^{-1}(\lambda^{++})$ (see Corollary~\ref{Corollary : convexity of dominance class}) and by Proposition~\ref{Proposition : reflectionsamedominance} it also implies that $\htt\circ \pr^{Y^{+}}\,:\, (\pr^{Y^{++}})^{-1}(\lambda^{++})\rightarrow \mathbb Z$ is order-preserving.

\end{proof}
Note that the function $\htt\circ \pr^{Y^{+}}$ is not order-preserving on the whole semi-group $W^a_+$. For example suppose that $\lambda \in Y^{++}$ and $\beta\in \Phi_+$ are such that $\lambda+\beta^\vee$ is also dominant. Then we can check that $\pi^{s_\beta(\lambda)}<\pi^{s_\beta(\lambda)-\beta^\vee}s_\beta$ whereas $\htt(s_\beta(\lambda)-\beta^\vee)<\htt(s_\beta(\lambda))$.

Proposition~\ref{affine_to_relative0} implies convexity of left $W$-cosets:
\begin{Corollary}\label{Corollary : convexity of coweight}
    Let $\lambda\in Y^+$, then the set $\pi^\lambda W=\{\bx \in W^a_+\mid \pr^{Y^+}(\bx)=\lambda\}$ is convex for the affinized Bruhat order.
\end{Corollary}
\begin{proof}
    Let $\bx,\by\in \pi^\lambda W$ such that $\bx<\by$ and let $\bx=\bx_0<\bx_1<\dots \bx_n=\by$ be a chain from $\bx$ to $\by$, in particular for all $k\in \llbracket0,n-1\rrbracket$, let $\beta_k[n_k]\in \Phi_+^a$ be such that $\bx_{k+1}=s_{\beta_k[n_k]}\bx_k$. For $k\in \llbracket0,n\rrbracket$, write $\bx_k=\pi^{\lambda_k}w_k$ with $\lambda_k\in Y^+$, $w_k\in W$. By convexity of $(\pr^{Y^{++}})^{-1}(\lambda^{++})$, $\pr^{Y^{++}}$ is constant along the chain, therefore by Proposition~\ref{Proposition : reflectionsamedominance}, for all $k\in \llbracket0,n-1\rrbracket$ we have $\lambda_{k+1}\in \{\lambda_k,s_{\beta_k}(\lambda_k)\}$. From Proposition~\ref{affine_to_relative0} we deduce that $v^{\lambda_{k+1}}\leq v^{\lambda_k}$. Since $\lambda_0=\lambda_n=\lambda$, $v^{\lambda_0}=v^{\lambda_n}=v^\lambda$ and thus $v^{\lambda_k}=v^\lambda$, so $\lambda_k=\lambda$ for all $k\in \llbracket0,n\rrbracket$. Hence $\pi^\lambda W$ is convex.
\end{proof}
\subsection{Relative length on $W$}\label{subsection: 2.1}
We define a relative length and a relative Bruhat order on $W$, which naturally arises in the study of the affinized length $\ell^a$ on $W^a_+$. This connection was already observed by Muthiah and Orr in \cite{muthiah2018walk}.

\begin{Definition} 
For any $v,w \in W$ let: \begin{equation}\ell_v(w)=|\Inv(w^{-1})\setminus \Inv(v^{-1})|-|\Inv(w^{-1})\cap \Inv(v^{-1})|.\end{equation} This is a signed version of the Bruhat length, in particular $\ell_1=\ell$. 

We associate an order to $\ell_v$ by setting, for any element $w\in W$ and any reflection $r\in W$: $w<_v wr$ if and only if $\ell_v(w)<\ell_v(wr)$ and then let $<_v$ be the order generated by these relations, it is strictly compatible with $\ell_v$. In particular $<_1$ is the classical Bruhat order.
\end{Definition}
As does the Bruhat length, the lengths $\ell_v$ have a geometric interpretation in the Coxeter complex associated to $(W,S)$. For $M$ a  wall of the Coxeter complex and $w\in W$, let $\varepsilon_w(M)=-1$ if $M$ separates $C^v_f$ and $C^v_w$, and $\varepsilon_w(M)=+1$ otherwise. Then \begin{equation}\ell_v(w)=\sum\limits_{M\in \varepsilon_{w}^{-1}(-1)} \varepsilon_{v^{-1}}(M).\end{equation}

We will use this relative length to give an alternative definition of the affinized length. Let us first give an explicit formula for $\ell_v$ depending only on the classical length $\ell=\ell_1$.

\begin{Lemma}\label{length_recursion}
If $sv>v$ with $v \in W$ and $s$ a simple reflection then for any $w\in W$, $\ell_{sv}(w)=\ell_v(sw)-1$.
\end{Lemma}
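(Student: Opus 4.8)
The statement to prove is Lemma \ref{length_recursion}: if $sv > v$ for a simple reflection $s$, then $\ell_{sv}(w) = \ell_v(sw) - 1$ for all $w \in W^v$. The natural strategy is to unwind both sides using only the combinatorics of inversion sets, since $\ell_v(w) = |\Inv(w^{-1}) \setminus \Inv(v^{-1})| - |\Inv(w^{-1}) \cap \Inv(v^{-1})|$. The key observation is that $sv > v$ means the simple root $\alpha$ attached to $s$ satisfies $\alpha \notin \Inv(v^{-1})$ (equivalently $v^{-1}(\alpha) > 0$), while $\Inv((sv)^{-1}) = \Inv(v^{-1}s) = s(\Inv(v^{-1})) \sqcup \{\alpha\}$ by the standard recursion for inversion sets under right multiplication by a simple reflection. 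So passing from $v$ to $sv$ has two effects on the reference set: it conjugates $\Inv(v^{-1})$ by $s$, and it adds the simple root $\alpha$.

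First I would handle the conjugation. For any $w$, one has $\Inv((sw)^{-1}) = \Inv(w^{-1}s)$, and I would relate $|\Inv(w^{-1}) \,\triangle\, \Inv(v^{-1})|$-type counts for the pair $(w, sv)$ to counts for the pair $(sw, v)$. The cleanest route is to rewrite $\ell_v(w)$ as a sum over all positive roots: $\ell_v(w) = \sum_{\gamma \in \Phi_+} \epsilon_v(\gamma) \cdot [\gamma \in \Inv(w^{-1})]$ where $\epsilon_v(\gamma) = -1$ if $\gamma \in \Inv(v^{-1})$ and $+1$ otherwise, matching the geometric description given just before the lemma (walls separating $1$ from $w$, signed according to whether they also separate $1$ from $v^{-1}$). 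Then $\ell_{sv}(w) = \sum_{\gamma} \epsilon_{sv}(\gamma)[\gamma \in \Inv(w^{-1})]$, and I need to compare $\epsilon_{sv}$ with $\epsilon_v \circ s$ together with the correction coming from $\alpha$ being newly added and from the behavior of $s$ on $\alpha$ itself. Concretely: $\Inv((sv)^{-1}) = s\,\Inv(v^{-1}) \sqcup \{\alpha\}$, so for $\gamma \ne \alpha$, $\gamma \in \Inv((sv)^{-1}) \iff s(\gamma) \in \Inv(v^{-1})$ (using that $s$ permutes $\Phi_+ \setminus \{\alpha\}$), while $\alpha \in \Inv((sv)^{-1})$.

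Then I would perform the change of variables $\gamma \mapsto s(\gamma)$ in the sum for $\ell_{sv}(w)$, splitting off the single root $\alpha$ (which $s$ sends to $-\alpha$, so it must be treated separately). Under this substitution the sum $\sum_{\gamma \in \Phi_+ \setminus\{\alpha\}} \epsilon_{sv}(\gamma)[\gamma \in \Inv(w^{-1})]$ becomes $\sum_{\delta \in \Phi_+ \setminus\{\alpha\}} \epsilon_v(\delta)[s(\delta) \in \Inv(w^{-1})] = \sum_{\delta \in \Phi_+\setminus\{\alpha\}}\epsilon_v(\delta)[\delta \in \Inv((sw)^{-1})]$, since $s(\delta) \in \Inv(w^{-1}) \iff \delta \in s\,\Inv(w^{-1})$, and $s\,\Inv(w^{-1})$ agrees with $\Inv((sw)^{-1})$ away from $\alpha$. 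This is almost exactly $\ell_v(sw)$, up to the $\alpha$ term: in $\ell_v(sw)$ the root $\alpha$ contributes $\epsilon_v(\alpha)[\alpha \in \Inv((sw)^{-1})]$, and since $sv>v$ gives $\alpha \notin \Inv(v^{-1})$, that contribution is $+[\alpha \in \Inv((sw)^{-1})]$; meanwhile in $\ell_{sv}(w)$ the root $\alpha$ contributes $\epsilon_{sv}(\alpha)[\alpha \in \Inv(w^{-1})] = -[\alpha \in \Inv(w^{-1})]$ because $\alpha \in \Inv((sv)^{-1})$. Finally, $[\alpha \in \Inv((sw)^{-1})]$ and $[\alpha \in \Inv(w^{-1})]$ differ by exactly $\mp 1$ (they can never be equal, as $\alpha \in \Inv(w^{-1}) \iff \alpha \notin \Inv((sw)^{-1})$, by the length recursion $\ell(sw) = \ell(w) \pm 1$). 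Tracking these three bits of bookkeeping — the $+[\alpha\in\Inv((sw)^{-1})]$ wanted, the $-[\alpha\in\Inv(w^{-1})]$ present, and the relation between the two indicators — yields exactly the discrepancy $-1$, giving $\ell_{sv}(w) = \ell_v(sw) - 1$.

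The main obstacle I anticipate is the careful sign/indicator bookkeeping around the single root $\alpha$: the substitution $\gamma \mapsto s(\gamma)$ is a bijection of $\Phi_+$ only after removing $\alpha$, so $\alpha$ must be extracted from every sum by hand, and one must be scrupulous that $sv > v$ is used precisely at the point where it forces $\epsilon_v(\alpha) = +1$ (and $\alpha \in \Inv((sv)^{-1})$). Everything else is a routine reindexing. An alternative, possibly shorter, proof would induct on $\ell(w)$ using the defining recursion of $\ell_v$ under right multiplication by simple reflections, but the global-sum argument above has the advantage of being manifestly uniform in $w$ and of making transparent why the correction is the constant $-1$.
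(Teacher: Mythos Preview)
Your proof is correct and follows essentially the same route as the paper's: both arguments rest on the bijection $s\colon \Phi_+\setminus\{\alpha\}\to\Phi_+\setminus\{\alpha\}$, the fact that $sv>v$ forces $\alpha\notin\Inv(v^{-1})$ and $\alpha\in\Inv((sv)^{-1})$, and the complementarity $[\alpha\in\Inv(w^{-1})]+[\alpha\in\Inv((sw)^{-1})]=1$. The only difference is packaging: the paper works directly with set cardinalities and splits into the two cases $\alpha\in\Inv(w^{-1})$ versus $\alpha\notin\Inv(w^{-1})$, whereas you absorb that dichotomy into a single indicator identity by writing $\ell_v(w)$ as the signed sum $\sum_{\gamma\in\Phi_+}\epsilon_v(\gamma)[\gamma\in\Inv(w^{-1})]$ and reindexing; this makes the correction term $-1$ appear without a case split, but the content is identical.
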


\begin{proof}
For any $w\in W$, the map $\gamma \mapsto s\gamma$ defines a bijection: \begin{equation*}\Inv(w^{-1})\setminus \{\alpha_s\} \cong \Inv(w^{-1}s) \setminus \{\alpha_s\}.\end{equation*}
Moreover because $sv>v$, $\alpha_s \in \Inv(v^{-1}s)$ and $\alpha_s \notin \Inv(v^{-1})$.

Therefore \begin{equation*}|\Inv(w^{-1})\cap \Inv(v^{-1}s) \setminus \{\alpha_s\}|=|\Inv(w^{-1}s) \cap \Inv(v^{-1})|\end{equation*} and \begin{equation*}|\Inv(w^{-1}) \setminus \Inv(v^{-1}s)|=|\Inv(w^{-1}s)\setminus (\Inv(v^{-1}) \cup \{\alpha_s\})|.\end{equation*}

\begin{enumerate}
    \item If $\alpha_s \in \Inv(w^{-1})$ then $\alpha_s \notin \Inv(w^{-1}s)$ and $\ell_{sv}(w)=|\Inv(w^{-1}s) \setminus \Inv(v^{-1})|-(|\Inv(w^{-1}s)\cap\Inv(v^{-1})|+1)=\ell_v(sw)-1$.
    \item If $\alpha_s \notin \Inv(w^{-1})$ then $\alpha_s \in \Inv(w^{-1}s)$ and $\ell_{sv}(w)=(|\Inv(w^{-1}s) \setminus \Inv(v^{-1})|-1)-|\Inv(w^{-1}s)\cap\Inv(v^{-1})|=\ell_v(sw)-1$.
\end{enumerate}
\end{proof}
\begin{Proposition}\label{lengthexplicit}
For all $v,w \in W$ the relative length $\ell_v(w)$ is given by
\begin{equation}
    \ell_v(w)=\ell(v^{-1}w)-\ell(v).
\end{equation}
\end{Proposition}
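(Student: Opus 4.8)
The plan is to prove Proposition \ref{lengthexplicit} by induction on $\ell(v)$, using Lemma \ref{length_recursion} as the inductive engine. The base case $\ell(v)=0$, i.e. $v=1$, reduces to the identity $\ell_1(w)=\ell(w)=\ell(w)-\ell(1)$, which is immediate since $\Inv(1^{-1})=\emptyset$ and so $\ell_1(w)=|\Inv(w^{-1})|=\ell(w^{-1})=\ell(w)$.

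For the inductive step, suppose the formula holds for all elements of length $n$ and let $v'$ have length $n+1$. The key observation is to write $v'=sv$ where $s$ is a simple reflection with $sv>v$ and $\ell(v)=n$; such a decomposition always exists (take any reduced word for $v'$ and let $s$ be its first letter). Then by Lemma \ref{length_recursion}, for any $w\in W^v$ we have $\ell_{v'}(w)=\ell_{sv}(w)=\ell_v(sw)-1$. Applying the inductive hypothesis to $v$ (of length $n$) and the element $sw$, this equals $\ell(v^{-1}(sw))-\ell(v)-1 = \ell(v^{-1}sw)-\ell(v)-1$. Now $v^{-1}s = (sv)^{-1} = (v')^{-1}$, so $\ell(v^{-1}sw) = \ell((v')^{-1}w)$, and since $sv>v$ we have $\ell(sv) = \ell(v)+1$, i.e. $\ell(v') = \ell(v)+1$, hence $\ell(v)+1 = \ell(v')$. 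Substituting gives $\ell_{v'}(w) = \ell((v')^{-1}w) - \ell(v') $, which is exactly the claim for $v'$.

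The main point requiring care is the bookkeeping around the decomposition $v'=sv$: one must confirm that $v^{-1}s$ really is $(v')^{-1}$ (it is, since $(sv)^{-1}=v^{-1}s^{-1}=v^{-1}s$ as $s$ is an involution) and that the length drop in Lemma \ref{length_recursion} is exactly compensated by the increment $\ell(v')=\ell(v)+1$. There is no serious obstacle here; the only thing to double-check is that Lemma \ref{length_recursion} is being applied in the correct direction — it is stated for $sv>v$, which is precisely the hypothesis we arranged. Since every element of positive length admits such a factorization with the first letter of a reduced word, the induction covers all of $W^v$, completing the proof.
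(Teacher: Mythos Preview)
Your proof is correct and follows exactly the same approach as the paper's: induction on $\ell(v)$ with base case $\ell_1=\ell$, and the inductive step carried out by peeling off the first simple reflection of a reduced word for $v$ via Lemma~\ref{length_recursion}. The paper states this more tersely (``take a reduced expression for $v$ and apply Lemma~\ref{length_recursion} recursively''), but your version spells out the same argument.
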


\begin{proof}
Since $\ell=\ell_1$, we take a reduced expression for $v$ and apply Lemma~\ref{length_recursion} recursively to get the result.
\end{proof}
\begin{Corollary}\label{Corollary : relativegrading}
    For any $v\in W$, the relative length $\ell_v$ is a grading of $(W,<_v)$.
\end{Corollary}
\begin{proof}
    Let $v,w,w'\in W$. By proposition~\ref{lengthexplicit}, $\ell_v(w')-\ell_v(w)=\ell(v^{-1}w')-\ell(v^{-1}w)$ and $w'$ covers $w$ for $<_v$ if and only if $v^{-1}w'$ covers $v^{-1}w$ for the (standard) Bruhat order. Since the Bruhat length is a grading of $(W,<_1)$ (see \cite[Theorem 2.2.6]{bjorner2005combinatorics}), $v^{-1}w'$ covers $v^{-1}w$ if and only if $\ell(v^{-1}w')-\ell(v^{-1}w)=1$ and $v^{-1}w'=v^{-1}wr$ for some reflection $r\in W$. Hence $w'$ covers $w$ for $<_v$ if and only if $\ell_v(w')-\ell_v(w)=1$ and $w'=wr$ for some reflection $r\in W$: $\ell_v$ is a grading of $(W,<_v)$.
\end{proof}
The order $<_v$ also has a geometric interpretation which will be important later on, it is given by the following corollary. \begin{Corollary}\label{ordregeom}
For any root $\alpha\in \Phi$ and elements $w,v\in W$, we have that $w<_v s_\alpha w$ if and only if, in the Coxeter complex of $W$, $C^v_w$ and $C^v_v$ are on the same side of the wall $M_\alpha$. 
\end{Corollary}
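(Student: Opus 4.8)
For any root $\alpha \in \Phi$ and element $w \in W^v$, we have $w <_v s_\alpha w$ if and only if, in the Coxeter complex of $W^v$, the chambers $C^v_w$ and $C^v_v$ lie on the same side of the wall $M_\alpha$.

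**Plan of proof.** The plan is to reduce everything to the explicit formula $\ell_v(w) = \ell(v^{-1}w) - \ell(v)$ from Proposition~\ref{lengthexplicit}, combined with the geometric interpretation of the ordinary Bruhat order on the Coxeter complex recalled in Subsection~\ref{subsection: geometric interpretation}. By definition, $w <_v s_\alpha w$ means $\ell_v(w) < \ell_v(s_\alpha w)$; applying Proposition~\ref{lengthexplicit} to both sides, this is equivalent to $\ell(v^{-1}w) < \ell(v^{-1}s_\alpha w)$, i.e. to $\ell(v^{-1}w) < \ell(v^{-1}s_\alpha v \cdot v^{-1}w)$. Writing $u = v^{-1}w$ and noting that $v^{-1}s_\alpha v = s_{v^{-1}(\alpha)}$ is the reflection associated to the root $v^{-1}(\alpha)$, the condition becomes $u <_1 s_{v^{-1}(\alpha)} u$ for the ordinary Bruhat order, i.e. $u < s_{v^{-1}(\alpha)}u$. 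So the first step is this purely formal rewriting: $w <_v s_\alpha w \iff v^{-1}w < s_{v^{-1}(\alpha)}\cdot v^{-1}w$.

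**Translating to geometry.** The second step is to interpret the right-hand side geometrically. By the description of the vectorial Bruhat order in Subsection~\ref{subsection: geometric interpretation} (applied with $\beta = \pm v^{-1}(\alpha)$ chosen positive), $u < s_{v^{-1}(\alpha)}u$ holds if and only if $C^v_u$ and $C^v_0$ lie on the same side of the wall $M_{v^{-1}(\alpha)}$ --- equivalently, $u^{-1}(\beta) > 0$ where $\beta$ is the positive root among $\pm v^{-1}(\alpha)$. Now I apply the isometry $v$ of the Coxeter complex: it sends the wall $M_{v^{-1}(\alpha)}$ to the wall $M_\alpha$ (since $v.M_{v^{-1}(\alpha)} = M_\alpha$ because $\langle v(x), \alpha\rangle = \langle x, v^{-1}(\alpha)\rangle$), it sends $C^v_0$ to $C^v_v$, and it sends $C^v_u = C^v_{v^{-1}w}$ to $C^v_{v \cdot v^{-1}w} = C^v_w$. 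Since an isometry of the Coxeter complex preserves the "same side of a wall" relation, we conclude that $C^v_u, C^v_0$ are on the same side of $M_{v^{-1}(\alpha)}$ if and only if $C^v_w, C^v_v$ are on the same side of $M_\alpha$. Chaining the two equivalences gives the claim.

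**Expected main obstacle.** The argument is essentially bookkeeping, so there is no deep obstacle; the one point requiring a little care is the sign convention. The order $<_v$ in the definition was only declared via $w <_v wr$ for $r$ a reflection, i.e. multiplication on the \emph{right}, whereas the statement to be proved involves $s_\alpha w$, i.e. multiplication on the \emph{left}. One must check that $w <_v s_\alpha w$ is a legitimate instance of the generating relations --- indeed $s_\alpha w = w r$ with $r = w^{-1}s_\alpha w = s_{w^{-1}(\alpha)}$ a reflection, so it is --- and then verify that the rewriting via Proposition~\ref{lengthexplicit} is consistent with which of $\pm v^{-1}(\alpha)$ one picks as the positive root; but since $M_{v^{-1}(\alpha)} = M_{-v^{-1}(\alpha)}$ and $s_\beta = s_{-\beta}$, this sign choice is immaterial to the final geometric statement. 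A second minor point is to make sure one is allowed to invoke the geometric interpretation of $<_1$ from Subsection~\ref{subsection: geometric interpretation} as a proven fact rather than just motivation; since it is presented there as the standard interpretation of inversion sets and the Bruhat order on a Coxeter complex, referencing \cite[Chapter 2]{ronan1989lectures} suffices.
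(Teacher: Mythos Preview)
Your proposal is correct and follows essentially the same route as the paper's proof: both use Proposition~\ref{lengthexplicit} to rewrite $\ell_v(s_\alpha w)-\ell_v(w)$ as $\ell(v^{-1}s_\alpha w)-\ell(v^{-1}w)$ and then invoke the standard geometric interpretation of the Bruhat order on the Coxeter complex. The paper compresses your substitution $u=v^{-1}w$ and the application of the isometry $v$ into the single phrase ``by the definition of the Coxeter complex'', whereas you spell out these steps explicitly; your handling of the left/right multiplication issue and the sign of $v^{-1}(\alpha)$ is careful and correct, though indeed immaterial to the result.
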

\begin{proof}
    We have $\ell_v(s_\alpha w)-\ell_v(w)=\ell(v^{-1}s_\alpha w)-\ell(v^{-1}w)=d^\mathbb N(v,s_\alpha w)-d^\mathbb N(v,w)$. By the definition of the Coxeter complex this is positive if and only if $C^v_v$ and $C^v_w$ are on the same side of the wall $M_\alpha$. 
\end{proof}
Therefore $<_v$ can be interpreted as a shift of the classical Bruhat order, corresponding geometrically to taking $C^v_v$ as fundamental chamber in the Coxeter complex.

\subsection{Relation with the affinized Bruhat length}\label{subsection 2.2}

We relate the affinized Bruhat order and the relative order defined in subsection~\ref{subsection: 2.1}, we start by an alternative expression for the affinized Bruhat length.
\begin{Proposition}\label{Prop : affine_length_expression0}
For any coweight $\lambda=v\lambda^{++} \in Y^+$, for any $w \in W$,

\begin{equation}\label{eq : relativelength inversionset}
  |\{\alpha \in \Inv(w^{-1}) \mid \langle \lambda,\alpha \rangle \geq 0 \}|-|\{\alpha \in \Inv(w^{-1}) \mid \langle \lambda,\alpha \rangle < 0 \}|  =\ell_{v^\lambda}(w).
\end{equation}
Therefore
\begin{align}
    \ell_\varepsilon^a(\pi^\lambda w)&=2\htt(\lambda^{++})+\varepsilon \ell_{v^\lambda}(w), \label{eq: affinelengthexpressionepsilon} \\ \ell^a(\pi^\lambda w)&=2\htt(\lambda^{++})+\ell_{v^\lambda}(w).\label{eq: affinelengthexpression}
\end{align}
\end{Proposition}
\begin{proof}
For $\lambda \in Y^+$ and $v \in W$ such that $\lambda = v\lambda^{++}$, then, $\alpha \in \Phi_+$ verifies $\langle \lambda,\alpha \rangle \geq 0$ if and only if $\alpha \in \Phi_\lambda \cup (\Phi_+\setminus \Inv(v^{-1})) $, so:

$$\{\alpha \in \Inv(w^{-1}) \mid \langle \lambda,\alpha \rangle \geq 0 \}=\big(\Inv(w^{-1})\setminus \Inv(v^{-1})\big) \bigsqcup \big(\Inv(w^{-1})\cap \Inv(v^{-1}) \cap \Phi_\lambda\big)$$  $$\Inv(w^{-1})\cap \Inv(v^{-1}) = \{\alpha \in \Inv(w^{-1}) \mid \langle \lambda,\alpha \rangle < 0 \} \bigsqcup \big(\Inv(w^{-1})\cap \Inv(v^{-1}) \cap \Phi_\lambda\big).$$ Therefore
\begin{equation*}
|\{\alpha \in \Inv(w^{-1}) \mid \langle \lambda,\alpha \rangle \geq 0 \}|-|\{\alpha \in \Inv(w^{-1}) \mid \langle \lambda,\alpha \rangle < 0 \}| = \ell_v(w)+2|\Inv(w^{-1})\cap\Inv(v^{-1})\cap\Phi_\lambda|\end{equation*}
By Lemma~\ref{Lemma : Minimal inversion set} we deduce Formula~\eqref{eq : relativelength inversionset}.

\end{proof}
 \begin{Remark}
 By combining Lemma~\ref{Lemma : height of general coweight} with Proposition~\ref{Prop : affine_length_expression0}, we obtain the formulas already given by Muthiah and Orr in \cite[Proposition 3.10]{muthiah2019bruhat}.
 \end{Remark}

\begin{Corollary}\label{affine_to_relative}Let $\lambda \in Y^+$ and $w\in W$. Suppose that $\pi^\mu w' = s_{\beta[n]} \pi^\lambda w$ for some affinized root $\beta[n] \in \Phi^a$ such that $\mu^{++}=\lambda^{++}$. Then
        \begin{equation}\label{eq: affinetorelative1}
        \pi^\mu w' > \pi^\lambda w \iff \ell_{v^\mu}(w')>\ell_{v^\lambda}(w).
    \end{equation}
Moreover for any $\lambda \in Y^+$ and $w,w' \in W$,
    \begin{equation}\label{eq: affinetorelative2}
        \pi^\lambda w < \pi^\lambda w' \iff w <_{v^\lambda} w'.
    \end{equation}
    In particular, $\pi^\lambda v^\lambda$ is the minimal element of $\pi^\lambda W$.

\end{Corollary}

\begin{proof}
 Equivalence~\eqref{eq: affinetorelative1} is a direct consequence of Formula~\eqref{eq: affinelengthexpression} and strict compatibility of the affinized Bruhat length and the affinized Bruhat order. It implies by iteration that a chain for the relative order $<_{v^\lambda}$ from $w$ to $w'$ lifts to a chain for the affinized Bruhat order from $\pi^\lambda w$ to $\pi^\lambda w'$. Conversely, by Corollary~\ref{Corollary : convexity of coweight} $\pr^{Y^+}$ is constant along any chain from $\pi^\lambda w$ to $\pi^\lambda w'$, and therefore the projection on $W$ of a chain from $\pi^\lambda w$ to $\pi^\lambda w'$ induces a chain for the relative Bruhat order $<_v$ from $w$ to $w'$.
\end{proof}

We deduce a partial version of Theorem~\ref{Theorem : maintheoremintro}, for vectorial covers with constant coweight.
\begin{Theorem}\label{Theorem: constantcoweight}
    Let $\bx,\by\in W^a_+$ be such that $\pr^{Y^+}(\bx)=\pr^{Y^+}(\by)$ and $\bx\leq \by$. Then
    \begin{equation}\label{eq: coverlengthdifference0}
        \bx \lhd \by \iff \ell^a(\by)=\ell^a(\bx)+1.
    \end{equation}
    More precisely, if $\bx=\pi^\lambda w$ then $\by=\pi^\lambda rw$ for some reflection $r\in W$ such that $rw$ covers $w$ for the relative Bruhat order $<_{v^\lambda}$.
\end{Theorem}
\begin{proof}
    By Equivalence~\eqref{eq: affinetorelative2}, $\pi^\lambda w'$ covers $\pi^\lambda w$ if and only if $w'$ covers $w$ for the relative Bruhat order $<_{v^\lambda}$. By Corollary~\ref{Corollary : relativegrading}, this is equivalent to $\ell_{v^\lambda}(w')=\ell_{v^\lambda}(w)+1$,. Therefore by Formula~\eqref{eq: affinelengthexpression} we deduce that $\bx \lhd \by \implies \ell^a(\by)=\ell^a(\bx)+1$. The converse is immediate by strict compatibility of the affinized Bruhat length (Theorem~\ref{Theorem Muthiah Orr}).
\end{proof}
\subsection{Vectorial covers with non-constant coweight}\label{subsection: 2.3}
The aim of this section is to prove Theorem~\ref{Theorem : maintheoremintro} for vectorial covers with non-constant coweight.

Beforehand, we need a few results on parabolic decomposition. The first lemma is an adaptation of a standard result on minimal coset representatives (see \cite[Theorem 2.5.5]{bjorner2005combinatorics}), and the second is proved by P-E. Chaput, L. Fresse and T. Gobet in \cite{chaput2021parametrization}.
\begin{Lemma}\label{Lemma : good parabolic chains}
    Let $J$ be a subset of $S$, recall Notation~\ref{Notation : paraboliccosets} for $W^J$. Let $v$ be an element of $W^J$ and $u$ be any element of $W$ such that $u<v$. Then, there is a saturated chain for the Bruhat order:
    \begin{equation}u=u_0\lhd u_1 \lhd \dots \lhd u_N=v\end{equation} such that, for any $i\in \llbracket1,N\rrbracket$, $u_{i-1}^{-1}u_i$ does not belong to $W_J$.
\end{Lemma}
\begin{proof}
    If $v$ covers $u$, it is clear since $u<v$ is a saturated chain, and as $v$ is a minimal coset representative, $u^{-1}v\notin W_J$. By induction it thus suffices, for a general pair $(u,v)$, to construct $u_1\in W$ such that $u_1$ covers $u$, $u^{-1}u_1\notin W_J$ and $u_1<v$, the rest of the chain is obtained by induction.
    Let $s_1\dots s_n$ be a reduced expression of $v$. Since $u<v$, there exists a reduced expression of $u$ obtained from $s_1\dots s_n$ by deleting letters $s_{i_1},\dots,s_{i_N}$. Choose one such that $i_N$ is minimal. Then let $t\in W$ be the reflection defined by $t=s_n\ldots s_{i_N+1}s_{i_N}s_{i_N+1}\ldots s_n$. We show that $u_1=ut$ satisfies the desired properties.
    \begin{enumerate}
        \item By construction, an expression of $ut$ is obtained from $s_1\dots s_n$ by deleting the $N-1$ letters $s_{i_1},\dots,s_{i_{N-1}}$. Therefore $ut<v$.
        \item Since an expression of $vt$ is obtained from $s_1\dots s_n$ by deleting $s_{i_N}$, $vt<v$, and since $v$ is the minimal coset representative of $vW_J$, $t$ does not belong to $W_J$.
        \item It remains to show that $ut$ covers $u$. By the first point, we have that $\ell(ut)\leq \ell(u)+1$, so it suffices to show that $ut\not<u$. Suppose by contradiction that $ut<u$. Then, by the strong exchange property, an expression of $ut$ is obtained from $u$ by deleting one letter $s_p$ of the reduced expression $s_1\ldots \check{s}_{i_1}\ldots \check{s}_{i_N}\ldots s_n$ (where $\check{s}_i$ denotes a letter $s_i$ taken away from the expression $s_1\dots s_n$).
        \begin{enumerate}
            \item Suppose that $p>i_N$. Then $t$ can also be written $s_n\ldots s_{p+1}s_ps_{p+1}\ldots s_n$, and $v=(vt)t=s_1\ldots \check{s_{i_N}}\ldots \check{s_p}\ldots s_n$, which contradicts the hypothesis that $s_1\dots s_n$ is reduced.
            \item Suppose now that there is $d\leq N-1$ such that $i_d<p<i_{d+1}$ (with the convention that $i_0=-1$). Then $t=s_n\ldots \check{s}_{i_N}\ldots \check{s}_{i_{d+1}}\ldots s_p\ldots \check{s}_{i_{d+1}}\ldots \check{s}_{i_N}\ldots s_n$, and $u=(ut)t$ can be written from $s_1\dots s_n$ by deleting the terms of indices $i_1,\dots,i_{N-1}$ and $p<i_N$, but not $i_N$. This contradicts the minimality of $i_N$.
        \end{enumerate}
    \end{enumerate}
    
\end{proof}
\begin{Definition}
    For $v,w\in W$, we write $v\leq_R w$ for:
    \begin{equation}
        v\leq_R w \iff \ell(w)=\ell(v)+\ell(wv^{-1}).
    \end{equation}
    
\end{Definition}
\begin{Remark}
    The relation $\leq_R$ is called the weak Bruhat order and it is related to minimal galleries: $v\leq_R w$ if and only if there is a minimal gallery from $1$ to $w^{-1}$ going through $v^{-1}$.
\end{Remark}
Recall that for $J\subset S$ and $x\in W$, $(x^J,x_J)$ denotes the unique pair of $W^J\times W_J$ such that $x=x^J.x_J$.
\begin{Lemma}\label{Lemma : chaput}\cite[Lemma 8.11]{chaput2021parametrization}
Let $J\subset S$ be a subset of simple reflections. Let $u$ be an element of $W$ and $t$ be a reflection of $W\setminus W_J$ such that $ut$ covers $u$. Then $(ut)_J\leq_R u_J$. In other words $((ut)_J)^{-1}$ lies on a minimal gallery from $1$ to $(u_J)^{-1}$.     
\end{Lemma}

\begin{Theorem}\label{non_reg_covers_same_orbit}
Let $\bx,\by\in W^a_+$ be such that $\pr^{Y^{++}}(\by)=\pr^{Y^{++}}(\bx)$ and $\bx\leq\by$. Then \begin{equation}\label{eq: non_reg_covers_same_orbit}\bx \lhd \by \iff \ell^a(\by)=\ell^a(\bx)+1.\end{equation}

More precisely, write $\bx=\pi^\lambda w$. Let $J$ be the set of simple reflections stabilizing $\lambda^{++}$ and let $v\in W^J$ be such that $\lambda =v\lambda^{++}$ (so $v=v^\lambda$ with Notation~\ref{Notation : paraboliccosets}). Then, if $\pr^{Y^+}(\by)\neq \pr^{Y^+}(\bx)$, there exists a unique reflection $r\in W$ such that:
\begin{enumerate}
    \item The reflection $r$ does not stabilize $\lambda$ and $\by=\pi^{r\lambda}rw$.
    \item For the Bruhat order on $W$, $v$ covers $rv$.
    \item Set $u=rv$, so $u_J\in W_J$ denotes the element $(rv)_J$ and $rvu_J^{-1}\in W^J$. Then $vu_J^{-1}$ is on a minimal gallery from $v$ to $w$ in $W$.

\end{enumerate}
\end{Theorem}

\begin{proof}
If $\pr^{Y^+}(\by)=\pr^{Y^+}(\bx)$ then Equivalence~\ref{eq: non_reg_covers_same_orbit} is given by Theorem~\ref{Theorem: constantcoweight}. Moreover if $\ell^a(\by)=\ell^a(\bx)+1$ and $\by\geq \bx$ then, by strict compatibility of $\ell^a$, $\by$ covers $\bx$. We are thus reduced to prove that, if $\by$ covers $\bx$ with $\pr^{Y^+}(\by)\neq \pr^{Y^+}(\bx)$ and $\pr^{Y^{++}}(\by)=\pr^{Y^{++}}(\bx)$, then $\ell^a(\by)=\ell^a(\bx)+1$.

Write $\bx =\pi^\lambda w$ and $v=v^\lambda$. By definition of the affinized Bruhat order, if $\bx \lhd \by$ then $\by$ is of the form $s_{\beta[n]}\bx$ for some $\beta[n]\in\Phi_+^a$.

Let $\by=s_{\beta[n]}\bx\in W^a_+$ with $\pr^{Y^{++}}(\by)=\lambda^{++}$ and $\pr^{Y^+}(\by)\neq \lambda$, in particular $n\neq \langle \lambda,\beta\rangle$. By Proposition~\ref{Proposition : reflectionsamedominance}, $n=0$ so $\by=\pi^{r\lambda}rw$ for the reflection $r=s_\beta$ which does not stabilize $\lambda$. Let us write $u=rv$, note that $u^J=v^{r\lambda}$. By Formula~\eqref{eq: affinelengthexpression}, \begin{equation}\label{eq: long0}\ell^a(\by)-\ell^a(\bx)=\ell_{u^J}(rw)-\ell_v(w).\end{equation}

By definition, $rv=u^Ju_J$ with, by Equality~\eqref{eq : additive parabolic length}, $\ell(rv)=\ell(u_J)+\ell(u^J)$. We compute
\begin{align}\label{eq:long1} 
    \ell_{u^J}(rw)-\ell_v(w)&= \ell((rv(u_J)^{-1})^{-1}rw)-\ell(v^{-1}w)+ \ell(v)-\ell(u^J) \nonumber\\
    &=\ell(u_J v^{-1}w)-\ell(v^{-1}w)+ \ell(v)+\ell(u_J)-\ell(u) \nonumber\\
    &= (\ell(v)-\ell(u))+(\ell(u_J) - (d^\mathbb N(v,w)-d^\mathbb N(vu_J^{-1},w))).
\end{align}
From Equations~\eqref{eq: long0} and~\eqref{eq:long1}, we deduce
\begin{equation}\label{eq:long}
    \ell^a(\by)-\ell^a(\bx)=(\ell(v)-\ell(u))+(\ell(u_J) - (d^\mathbb N(v,w)-d^\mathbb N(vu_J^{-1},w))).
\end{equation}
In Expression~\eqref{eq:long}, by the triangle inequality and since $d^\mathbb N(v,vu_J^{-1})=\ell(u_J)$, the second term $\ell(u_J) - (d^\mathbb N(v,w)-d^\mathbb N(vu_J^{-1},w))$ 
is non-negative, and it is equal to $0$ if and only if $d^\mathbb N(v,w)=d^\mathbb N(v,vu_J^{-1})+d^\mathbb N(vu_J^{-1},w)$, so if and only if $vu_J^{-1}$ is on a minimal gallery from $v$ to $w$. 

Recall Definition~\ref{def: projparabolic} of $\pr_{vW_J}(w)$. Since $vu_J^{-1}$ lies in $vW_J$, a minimal gallery from $vu_J^{-1}$ to $w$ goes through $\pr_{vW_J}(w)$. Thus $\ell(u_J) - (d^\mathbb N(v,w)-d^\mathbb N(vu_J^{-1},w))$ is equal to zero if and only if $u_J^{-1}$ is on a minimal gallery from $1$ to $v^{-1}\pr_{vW_J}(w)$ in $W_J$.

Let us first suppose that this is not the case, we want to deduce that $\by$ does not cover $\bx$. We thus want to produce a non trivial chain from $\pi^\lambda w$ to $\pi^{r\lambda}rw$. By Lemma~\ref{Lemma : non_minimal_distance}, there is a reflection $t\in W_J$ such that $d^{W_J}(1,tv^{-1}\pr_{vW_J}(w))>d^{W_J}(1,v^{-1}\pr_{vW_J}(w))$ and $d^{W_J}(u_J^{-1},tv^{-1}\pr_{vW_J}(w))<d^{W_J}(u_J^{-1},v^{-1}\pr_{vW_J}(w))$. In $W$, this implies $d(vt,w)>d(v,w)$ and $d(vtu_J^{-1},w)<d(vu_J^{-1},w)$.

Let $\tilde w = vtv^{-1} w$. We compute:
\begin{equation*}
    \begin{split}
        \ell_v(\tilde w)-\ell_v(w)=d^\N(vt,w)-d^\N(v,w)>0 \\
        \ell_{u^J}(rw)-\ell_{u^J}(r\tilde w)=d^\N(vu_J^{-1},w)-d^\N(vtu_J^{-1},w)> 0\\
        \ell_{u^J}(r\tilde w)-\ell_v(\tilde w) = \ell(v)-\ell(u)+\ell(u_J) - (d^\mathbb N(v,\tilde w)-d^\mathbb N(vu,\tilde w))>0.  
    \end{split}
\end{equation*}
Hence by Lemma~\ref{affine_to_relative0} and Corollary~\ref{affine_to_relative}, \begin{equation}\pi^\lambda w < \pi^\lambda \tilde w < \pi^{r\lambda} r\tilde w < \pi^{r\lambda} rw.\end{equation}

Suppose now that $vu_J^{-1}$ is on a minimal gallery from $v$ to $w$, then by Formula~\eqref{eq:long}, $\ell^a(\by)-\ell^a(\bx)=\ell(v)-\ell(rv)$. If this is equals to $1$ then $\by$ covers $\bx$; it remains to show the converse. Suppose that $\ell(v)-\ell(rv)=N>1$. Let \begin{equation}rv=u_0\lhd u_1\lhd \dots\lhd u_N=v\end{equation} be a saturated chain in $W$ from $rv$ to $v$ given by Lemma~\ref{Lemma : good parabolic chains} and, for $i\in \llbracket 1,N\rrbracket$ let $\beta_i \in \Phi_+$ be such that $u_i=s_{\beta_i}u_{i-1}$, so $u_i=s_{\beta_i}\dots s_{\beta_1} u\geq rv$. Note in particular that
\begin{equation}\label{eq: lengthofu_i}
    \ell(u_i)=\ell(rv)+i=\ell(v)-N+i.
\end{equation}
Let us show that it induces a chain for the affinized Bruhat order \begin{equation}\label{eq: lengthuij6}\pi^\lambda w =s_{\beta_N[0]}\dots s_{\beta_1[0]} \pi^{r\lambda}rw<s_{\beta_{N-1}[0]}\dots s_{\beta_1[0]}\pi^{r\lambda}rw<\dots<\pi^{r\lambda}rw.\end{equation}
Since $s_{\beta_i[0]}\dots s_{\beta_1[0]}\pi^{r\lambda}rw=\pi^{u_i\lambda^{++}}s_{\beta_i}\dots s_{\beta_1} rw$, by Formula~\eqref{eq: affinelengthexpression} it is enough to verify \begin{equation}\label{eq: lengthuij5}\forall i \in \llbracket 0,n\rrbracket,\;\ell_{u_i^J}(s_{\beta_i}\dots s_{\beta_1} rw)=\ell_{v}(w)+N-i.\end{equation}
We compute
\begin{align}\label{eq: lengthuij1}
    \ell_{u_i^J}(s_{\beta_i}\dots s_{\beta_1} rw)&=\ell((u_i (u_i)_J^{-1})^{-1}s_{\beta_i}\dots s_{\beta_1}rw)-\ell(u_i^J) \nonumber\\
    &=\ell((u_i)_Jv^{-1}w)-\ell(u_i^J).
\end{align}Since the saturated chain $u_0<u_1<\ldots<u_N$ is obtained from Lemma~\ref{Lemma : good parabolic chains}, $u_i$ covers $u_{i-1}$ such that the reflection $u_{i-1}^{-1}u_i$ does not belong to $W_J$, so by Lemma~\ref{Lemma : chaput}, $(u_i)_J\leq_R (u_{i-1})_J$ and by iteration we have $(u_i)_J\leq_R (u_0)_J=u_J$. Otherwise said, $(u_i)_J^{-1}$ is on a minimal gallery from $1$ to $u_J^{-1}$, therefore $v(u_i)_J^{-1}$ is on a minimal gallery from $v$ to $vu_J^{-1}$, hence on a minimal gallery from $v$ to $w$. We deduce \begin{equation}\label{eq: lengthuij2}\ell((u_i)_Jv^{-1}w)=\ell(v^{-1}w)-\ell((u_i)_J).\end{equation} Combining Formulas~\eqref{eq: lengthuij1} and~\eqref{eq: lengthuij2} we obtain
\begin{equation}\label{eq: lengthuij3}
    \ell_{u_i^J}(s_{\beta_i}\dots s_{\beta_1} rw)=\ell(v^{-1}w)-\ell((u_i)_J)-\ell(u_i^J).
\end{equation}Moreover, \begin{equation}\label{eq: lengthuij4}
    \ell((u_i)_J)+\ell(u_i^J)=\ell(u_i)=\ell(v)-(N-i)
\end{equation} by Formulas~\eqref{eq : additive parabolic length} and~\eqref{eq: lengthofu_i}. Combining Formulas~\eqref{eq: lengthuij3} and~\eqref{eq: lengthuij4}, we deduce Formula~\eqref{eq: lengthuij5}. This concludes the proof.
\end{proof}

\section{Properly affine covers} \label{section: varying dominant coweight}

\subsection{A few properties of properly affine covers}\label{subsection: 3.1}

We now turn to the case of covers $\pi^\lambda w <\pi^\mu w'$ in $W^a_+$ with $\mu^{++}\neq \lambda^{++}$. Such covers are called properly affine covers.

By Formula~\eqref{eq: leftaction}, if $\pi^\mu s_{\beta} w = s_{\beta[n]}\pi^\lambda w >\pi^\lambda w$ with $\beta[n]\in\Phi_+^a$, then $n\in \mathbb Z \setminus \rrbracket0,\langle\lambda,\beta\rangle\llbracket$. Conversely, if $n\in \mathbb Z \setminus \llbracket0,\langle\lambda,\beta\rangle\rrbracket$ then $s_{\beta[n]}\pi^\lambda w >\pi^\lambda w$ however, $s_{\beta[n]}\pi^\lambda w$ may not be in $W^a_+$ as $\lambda+\mathbb Z \beta^\vee \not\subset Y^+$. The limit cases $n\in\{0,\langle\lambda,\beta\rangle\}$ correspond to $\lambda^{++}=\mu^{++}$ dealt with in the previous section. 

We first show that properly affine covers occur only for minimal $n$, in the following sense.
\begin{Proposition}\label{n_is_small} Let $\lambda \in Y^+$ and $w\in W$, let $\beta \in \Phi$ and $n\in \mathbb Z$. Let us denote $\sigma=\sgn(\langle \lambda,\beta\rangle)\in\{1,-1\}$. If $\pi^\mu w'=s_{\beta[n]}\pi^\lambda w \rhd \pi^\lambda w$ is a cover with $\lambda^{++}\neq\mu^{++}$, then $n\in\{-\sigma,\langle \lambda,\beta\rangle+\sigma\}$. 
\end{Proposition}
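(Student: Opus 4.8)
The plan is to assume $\pi^\mu w' = s_{\beta[n]}\pi^\lambda w > \pi^\lambda w$ with $\lambda^{++}\neq \mu^{++}$ and with $n$ \emph{not} minimal — i.e. $n\notin\{-\sigma,\langle\lambda,\beta\rangle+\sigma\}$ — and then factor the reflection $s_{\beta[n]}$ through an intermediate affine reflection so as to produce a strict intermediate element, contradicting that $\pi^\mu w'$ covers $\pi^\lambda w$. First I would reduce to the case $\beta\in\Phi_+$ and $\langle\lambda,\beta\rangle\geq 0$ (so $\sigma=+1$): replacing $\beta$ by $-\beta$ and $n$ by $-n$ leaves $s_{\beta[n]}$ and the affine root $\beta[n]$ unchanged, so this is harmless. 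Under the covering hypothesis, Lemma~1.? (the first lemma of Subsection \ref{subsection: prelimary results}) tells us $(\pi^\lambda w)^{-1}(\beta[n])\in\Phi_+^a$, which by Formula \eqref{eq: leftaction} means $\sigma(n-\langle\lambda,\beta\rangle)\geq 0$, i.e. (with $\sigma=+1$) $n\geq\langle\lambda,\beta\rangle$, together with the fact that $s_{\beta[n]}\pi^\lambda w>\pi^\lambda w$ forces $n\in\mathbb Z\setminus[0,\langle\lambda,\beta\rangle]$, hence $n\geq\langle\lambda,\beta\rangle+1$. The non-minimality assumption then says $n\geq\langle\lambda,\beta\rangle+2$.

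The key computation is to insert the affine reflection $s_{\beta[n-1]}$ (or, symmetrically, $s_{\beta[\langle\lambda,\beta\rangle+1]}$ from the other end). Set $\bz=s_{\beta[n-1]}\pi^\lambda w=\pi^{(n-1)\beta^\vee+s_\beta\lambda}s_\beta w$. Since $n-1\geq \langle\lambda,\beta\rangle+1>\langle\lambda,\beta\rangle\geq 0$, Formula \eqref{eq: leftaction} gives $\bz>\pi^\lambda w$, and the first lemma of the preliminaries shows $\pr^{Y^+}(\bz)\in Y^+$, so $\bz\in W^+$. Next I must check $\pi^\mu w'>\bz$: here $\pi^\mu w'=s_{\beta[n]}\pi^\lambda w=s_{\beta[n]}s_{\beta[n-1]}\bz=\pi^{\beta^\vee}\bz$ — more precisely $s_{\beta[n]}s_{\beta[n-1]}=\pi^{n\beta^\vee}s_\beta\pi^{(n-1)\beta^\vee}s_\beta=\pi^{n\beta^\vee}\pi^{-(n-1)\beta^\vee}=\pi^{\beta^\vee}$ — so $\pi^\mu w'=s_{\beta[m]}\bz$ where $\beta[m]$ is the affine reflection with $s_{\beta[m]}=\pi^{\beta^\vee}s_{s_\beta w(\beta')}\cdots$; cleanest is to write $\pi^\mu w' = s_{\gamma[k]}\bz$ directly by conjugating, and then verify $\bz^{-1}(\gamma[k])\in\Phi_+^a$ via \eqref{eq: leftaction}, which should again reduce to the inequality $n\geq \langle\lambda,\beta\rangle+2$. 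This exhibits $\pi^\lambda w<\bz<\pi^\mu w'$ with $\bz\in W^+\setminus\{\pi^\lambda w,\pi^\mu w'\}$ (the three coweights $\lambda,\,(n-1)\beta^\vee+s_\beta\lambda,\,\mu=(n-\langle\lambda,\beta\rangle)\beta^\vee+\lambda$ are pairwise distinct since $n-1\neq 0$ and $n-1\neq\langle\lambda,\beta\rangle$), contradicting the cover.

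I would expect the main obstacle to be bookkeeping the sign conventions in Formula \eqref{eq: leftaction} — in particular the warning in the excerpt that \eqref{eq: leftaction} must be applied to $-\beta[0]$ rather than $\beta[0]$ when $\beta\in\Phi_-$ and $n=0$ — and making sure that $\bz\neq\pi^\mu w'$ exactly when $n$ is non-minimal, i.e. that the degenerate case $n=\langle\lambda,\beta\rangle+1$ (where $\bz$ would coincide with $\pi^\mu w'$ or with an element of the previous dominance class) is correctly excluded. I should also double-check that the relevant coweights do not accidentally satisfy $\big((n-1)\beta^\vee+s_\beta\lambda\big)^{++}=\lambda^{++}$, which would land $\bz$ in the constant-dominance-class situation — but this cannot produce a problem, since even then $\bz$ is a genuine intermediate element and still contradicts the cover. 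A final sanity check: running the same argument with $s_{\beta[\langle\lambda,\beta\rangle+1]}$ inserted near the other extremity handles, by symmetry (reflection $\mu\leftrightarrow s_\beta\mu$, which swaps the two values $-\sigma$ and $\langle\lambda,\beta\rangle+\sigma$), the case where $n$ is large negative, so that the two insertions together cover all non-minimal $n$.
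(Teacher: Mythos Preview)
Your approach has a genuine gap at the step ``$\pi^\mu w' > \bz$''. You correctly compute that $s_{\beta[n]}s_{\beta[n-1]}=\pi^{\beta^\vee}$, so $\pi^\mu w'=\pi^{\beta^\vee}\bz$; but this is a pure translation, not an affine reflection. There is \emph{no} affine root $\gamma[k]$ with $s_{\gamma[k]}\bz=\pi^\mu w'$: indeed $\bz=\pi^{s_\beta\lambda+(n-1)\beta^\vee}s_\beta w$ and $\pi^\mu w'=\pi^{s_\beta\lambda+n\beta^\vee}s_\beta w$ have the same $W^v$-component $s_\beta w$, whereas any affine reflection $s_{\gamma[k]}$ acts on the $W^v$-component by left multiplication by $s_\gamma\neq 1$. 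So the sentence ``cleanest is to write $\pi^\mu w' = s_{\gamma[k]}\bz$ directly by conjugating'' cannot be completed, and the inequality $\bz<\pi^\mu w'$ is not a single-step relation in the generating set of the affine Bruhat order.

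To bridge $\bz$ and $\pi^\mu w'$ one needs at least two affine reflections, and this is exactly where the paper's case distinction enters. The paper inserts $\pi^{\lambda+\sigma\beta^\vee}s_\beta w$ (your alternative $s_{\beta[\langle\lambda,\beta\rangle+1]}\pi^\lambda w$) and then, to reach $\pi^\mu s_\beta w$, splits according to whether $w<_{v^\mu}s_\beta w$ or $s_\beta w<_{v^\mu}w$: in the first case it passes through $\pi^\mu w$, in the second through $\pi^{\lambda+\sigma\beta^\vee}w$. The key observation making this work is that all coweights in $\lambda+\sigma\mathbb Z_{>0}\beta^\vee$ lie on the same side of $M_\beta$, so the relative order of $w$ and $s_\beta w$ with respect to $<_{v^\nu}$ is the same for every such $\nu$; this is what allows the same case to cover both intermediate points. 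Your proposal omits precisely this dichotomy, and without it the chain does not close.

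A smaller point: your appeal to the first lemma of Subsection~\ref{subsection: prelimary results} to conclude $\pr^{Y^+}(\bz)\in Y^+$ goes in the wrong direction (that lemma shows the \emph{smaller} element has coweight in $Y^+$ when the larger one does). What actually gives $\bz\in W^+$ is that its coweight lies in the segment $[\lambda,\mu]\subset Y^+$ by convexity.
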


\begin{proof}
    For any $\nu \in Y^+$ if we identify the Coxeter complex of $W$ with the positive Tits cone $\mathcal T\subset \mathbb A$, $C^v_{v^\nu}$ is the closest vectorial chamber, from the fundamental chamber, containing $\nu$ in its closure. All the elements of $\lambda+\sigma \Z_{>0}\beta^\vee$ are on the same side of $M_\beta$, hence by Corollary~\ref{ordregeom}, for any two such $\nu,\nu'\in \lambda + \sigma \mathbb Z_{>0} \beta^\vee$ and any $w\in W$,
    \begin{equation}\label{eq: n_is_small1}w<_{v^\nu} s_\beta w \iff w<_{v^{\nu'}} s_\beta w.\end{equation} 

    Suppose first that $n\in \langle \lambda,\beta\rangle + \sigma \mathbb Z_{>1}$ and let $\mu=\lambda + (n-\langle \lambda,\beta\rangle) \beta^\vee$, Then
    \begin{enumerate}
    \item If $w<_{v^\mu}s_\beta w$, we have the chain
    \begin{equation}\pi^\lambda w < s_{\beta[\langle \lambda,\beta\rangle+\sigma]}\pi^\lambda w = \pi^{\lambda+\sigma \beta^\vee} s_\beta w <\pi^{\mu}w<\pi^\mu s_\beta w.\end{equation}
    The second inequality comes from $\pi^\mu w = s_{\beta[n+\sigma]}\pi^{\lambda+\sigma \beta^\vee}s_\beta w$ and Formula~\eqref{eq: leftaction}, the third comes from Equivalence~\eqref{eq: affinetorelative2}.
    \item Else $s_\beta w<_{v^\mu} w$, so by Equivalence~\eqref{eq: n_is_small1}, $s_\beta w <_{v^{\lambda+\sigma \beta^\vee}} w$ and we have the chain
    \begin{equation}\pi^\lambda w < s_{\beta[\langle \lambda,\beta\rangle+\sigma]}\pi^\lambda w = \pi^{\lambda+\sigma \beta^\vee} s_\beta w <\pi^{\lambda+\sigma \beta^\vee} w<\pi^\mu s_\beta w.\end{equation}
    Here the second inequality comes from Equivalence~\eqref{eq: affinetorelative2}. The third comes from $\pi^\mu s_\beta w = s_{\beta[n+\sigma]}\pi^{\lambda+\sigma \beta^\vee} w$ and Formula~\eqref{eq: leftaction}.
    \end{enumerate}
    Either way, for $n\in \langle \lambda,\beta\rangle + \sigma\mathbb Z_{>1}$, $s_{\beta[n]}\pi^\lambda w$ does not cover $\pi^\lambda w$. For $n\in -\sigma \mathbb Z_{>1}$ the argument is similar, because all the elements of $s_\beta \lambda - \sigma \mathbb Z_{>0} \beta^\vee$ are on the same side of $M_\beta$, in particular on the side of $\mu=s_\beta\lambda+n\beta^\vee$.
    \begin{enumerate}
        \item If $w<_{v^\mu}s_\beta w$, we have a chain
    \begin{equation}\pi^\lambda w < s_{\beta[-\sigma]}\pi^\lambda w = \pi^{s_\beta(\lambda+\sigma \beta^\vee)} s_\beta w <\pi^{\mu}w<\pi^\mu s_\beta w.\end{equation}
        \item Else $s_\beta w<_{v^\mu} w$ so $s_\beta w<_{v^{s_\beta(\lambda+\sigma\beta^\vee)}} w$ and we have a chain
    \begin{equation}\pi^\lambda w < s_{\beta[-\sigma]}\pi^\lambda w = \pi^{s_\beta(\lambda+\sigma \beta^\vee)} s_\beta w <\pi^{s_\beta(\lambda+\sigma \beta^\vee)} w<\pi^\mu s_\beta w.\end{equation}
    \end{enumerate}
    Hence the only possible covers (with varying coweights) are for $n \in \{-\sigma,\langle \lambda,\beta\rangle+\sigma\}$.
\end{proof}

\begin{Remark}
    To follow up on Remark~\ref{Remark: vectorial reflections}, note that by Equation~\eqref{eq : lefttorightmultiplication}, we have $s_{\beta[\sigma+\langle\lambda,\beta\rangle]}\pi^\lambda w = \pi^\lambda w s_{w^{-1}(\beta)[\sigma]}$ (where $\sigma=\sgn(\langle\lambda,\beta\rangle)$). Therefore Proposition~\ref{n_is_small} tells us that, if $\by$ covers $\bx$ in $W^a_+$, then $\by$ is obtained from $\bx$ applying an affinized reflection $s_{\tilde\beta[n]}$ either on the left or on the right, with $n\in \{-1,0,1\}$.
\end{Remark}
This is still far from a sufficient condition and many cases of potential covers can still be eliminated. We give another necessary condition for $\pi^\mu s_\beta w=s_{\beta[n]}\pi^\lambda w >\pi^\lambda w$ to be a cover, this is a generalisation of the chains produced in the proof of Theorem~\ref{non_reg_covers_same_orbit}:
\begin{Proposition}\label{covers_minimal_galleries}
    Let $\pi^\mu s_\beta w=s_{\beta[n]}\pi^\lambda w>\pi^\lambda w$ with $\mu^{++}\neq\lambda^{++}$. 
    Suppose that $s_\beta v^\mu$ is not on a minimal gallery from $w$ to $v^\lambda$. Then $\pi^\mu s_\beta w>\pi^\lambda w$ is not a cover. 
\end{Proposition}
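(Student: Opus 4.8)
The plan is to mimic the chain-building argument used at the end of the proof of Theorem~\ref{non_reg_covers_same_orbit}, this time in the setting of varying dominance class, and to invoke Lemma~\ref{Lemma : non_minimal_distance}. By Proposition~\ref{n_is_small} we may assume $n \in \{-\sigma, \langle\lambda,\beta\rangle + \sigma\}$ with $\sigma = \sgn(\langle\lambda,\beta\rangle)$; up to replacing $\beta$ by $-\beta$ and adjusting signs (as in the two displayed ``limit cases'' after Proposition~\ref{n_is_small}) I would reduce to the case $\sigma = +1$, so that $s_\beta v^\lambda > v^\lambda$ and $\mu = \lambda + \sigma\beta^\vee$, with $v^\mu = (s_\beta v^\lambda)^{J'}$ for the appropriate standard parabolic $W_{J'} = W_{\mu^{++}}$. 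The hypothesis is that $s_\beta v^\mu$ does not lie on any minimal gallery from $w$ to $v^\lambda$ in the Coxeter complex of $W^v$.

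First I would apply Lemma~\ref{Lemma : non_minimal_distance} with $v_1 = v^\lambda$, $v_2 = s_\beta v^\mu$, and $w$ (the group element), which yields a reflection $r \in W^v$ with $d^{W^v}(v^\lambda, rw) > d^{W^v}(v^\lambda, w)$ and $d^{W^v}(s_\beta v^\mu, rw) < d^{W^v}(s_\beta v^\mu, w)$. Write $r = s_\gamma$ for $\gamma \in \Phi_+$ and set $\tilde w = r w = s_\gamma w$. The first distance inequality, via Proposition~\ref{lengthexplicit} and the definition of $<_{v^\lambda}$, should give $w <_{v^\lambda} \tilde w$, hence $\pi^\lambda w < \pi^\lambda \tilde w$ by Corollary~\ref{affine_to_relative}. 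Symmetrically, the second inequality should give $s_\beta \tilde w <_{v^\mu} s_\beta w$, hence $\pi^\mu s_\beta \tilde w < \pi^\mu s_\beta w$. The remaining link $\pi^\lambda \tilde w < \pi^\mu s_\beta \tilde w$ is where I expect the real work: I want to run the same length computation as in equation~\eqref{eq:long} of the proof of Theorem~\ref{non_reg_covers_same_orbit}, replacing $w$ by $\tilde w$, to show that $\ell^a(\pi^\mu s_\beta \tilde w) - \ell^a(\pi^\lambda \tilde w) = \ell(u) + 1 - (d^\mathbb N(v^\lambda, \tilde w) - d^\mathbb N(v^\lambda u, \tilde w)) > 0$, where $u \in W_J$ is defined so that $s_\beta v^\lambda u = v^\mu$; this is $> 0$ by the triangular inequality $d^\mathbb N(v^\lambda, \tilde w) - d^\mathbb N(v^\lambda u, \tilde w) \le d^\mathbb N(v^\lambda u, v^\lambda) = \ell(u)$, exactly as before. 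Stringing these three strict inequalities together, $\pi^\lambda w < \pi^\lambda \tilde w < \pi^\mu s_\beta \tilde w < \pi^\mu s_\beta w$, so $\pi^\mu s_\beta w$ does not cover $\pi^\lambda w$.

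The subtle points I anticipate are: (i) the length computation~\eqref{eq:long} was carried out in the proof of Theorem~\ref{non_reg_covers_same_orbit} under the extra assumption $\ell(s_\beta v^\lambda) = \ell(v^\lambda) - 1$ (with a reduction to the general case via a chain of reflections), whereas here $s_\beta v^\lambda > v^\lambda$, so I must re-derive the analogous identity $\ell_{v^\mu}(s_\beta \tilde w) - \ell_{v^\lambda}(\tilde w) = \ell(u) + 1 - (d^\mathbb N(v^\lambda,\tilde w) - d^\mathbb N(v^\lambda u, \tilde w))$ directly using $\ell(v^\mu) = \ell(s_\beta v^\lambda) - \ell(u) = \ell(v^\lambda) + 1 - \ell(u)$ together with Proposition~\ref{lengthexplicit}; (ii) I need to confirm the positivity conditions from Formula~\eqref{eq: leftaction} at each step so that the relations are genuine Bruhat-order relations in $W^+$ and the elements stay in $W^+$ (here Lemma~\ref{subsection: prelimary results}'s first Lemma guarantees membership once the endpoints are in $W^+$); and (iii) I must translate the distance inequalities about $s_\beta v^\mu$ returned by Lemma~\ref{Lemma : non_minimal_distance} into inequalities about $v^\mu$ and $u$, using that $v^\mu = (s_\beta v^\lambda)^{J'}$ and the $\pr_{vW_J}$-projection compatibility from standard Coxeter theory (\cite[Theorem 2.9]{ronan1989lectures}), just as in Theorem~\ref{non_reg_covers_same_orbit}. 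The main obstacle is thus bookkeeping: correctly identifying $u$, $v^\mu$ and the relevant parabolic, and checking that the three-step chain's middle inequality is strict via the re-derived version of~\eqref{eq:long}.
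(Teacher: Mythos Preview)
Your chain $\pi^\lambda w < \pi^\lambda \tilde w < \pi^\mu s_\beta \tilde w < \pi^\mu s_\beta w$, obtained from Lemma~\ref{Lemma : non_minimal_distance} applied with $(v_1,v_2)=(v^\lambda,\,s_\beta v^\mu)$, is exactly the paper's argument, and your handling of the two outer inequalities via Corollary~\ref{affine_to_relative} is correct. The problem is your treatment of the middle inequality. The identity $v^\mu=(s_\beta v^\lambda)^{J'}$ is unjustified and generally false: there is no reason for $s_\beta v^\lambda$ to send $\mu^{++}$ to $\mu$, since $\lambda^{++}\neq\mu^{++}$ and the parabolics $W_{\lambda^{++}}$, $W_{\mu^{++}}$ are unrelated. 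Consequently your displayed formula $\ell^a(\pi^\mu s_\beta\tilde w)-\ell^a(\pi^\lambda\tilde w)=\ell(u)+1-(d^\mathbb N(v^\lambda,\tilde w)-d^\mathbb N(v^\lambda u,\tilde w))$ is wrong: by Proposition~\ref{Prop : affine_length_expression0} the left side carries an extra term $2\htt(\mu^{++}-\lambda^{++})$, nonzero precisely because $\mu^{++}\neq\lambda^{++}$. Equation~\eqref{eq:long} was valid only in the constant-dominance-class setting of Theorem~\ref{non_reg_covers_same_orbit}, where that height term vanishes.

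The middle link is in fact immediate and requires no length computation at all. Since $\mu^{++}\neq\lambda^{++}$ we have $n\notin\{0,\langle\lambda,\beta\rangle\}$, so the $\pi$-coefficient $|n|-\sgn(n)\langle\lambda,\beta\rangle$ in Formula~\eqref{eq: leftaction} is strictly positive; this coefficient does not depend on the vectorial part, hence $(\pi^\lambda\tilde w)^{-1}(\beta[n])\in\Phi^a_+$ and $\pi^\lambda\tilde w < s_{\beta[n]}\pi^\lambda\tilde w = \pi^\mu s_\beta\tilde w$ directly. Your anticipated obstacles (i) and (iii), and the reduction to $\sigma=+1$, are therefore all unnecessary; the positivity check you flag in (ii) is itself the entire argument for the middle step. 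Note also that for (iii) no projection is needed: left-invariance of $d^{W^v}$ already gives $d^{W^v}(s_\beta v^\mu,\tilde w)<d^{W^v}(s_\beta v^\mu,w)\iff \ell_{v^\mu}(s_\beta\tilde w)<\ell_{v^\mu}(s_\beta w)$.
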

\begin{proof}
    
We express the difference of $\varepsilon$-length using Formula~\eqref{eq: affinelengthexpressionepsilon}:

\begin{equation}  \ell^a_\varepsilon(\pi^\mu s_\beta w)- \ell^a_\varepsilon(\pi^\lambda w)=2\htt(\mu^{++}-\lambda^{++})+\varepsilon(\ell_{v^\mu}(s_\beta w)-\ell_{v^\lambda}(w)).\end{equation}

If there exists a reflection $r\in W$ such that $\ell_{v^\lambda}(rw)>\ell_{v^\lambda}(w)$ and $\ell_{v^\mu}(s_\beta rw)<\ell_{v^\mu}(s_\beta w)$ then using Formula~\eqref{eq: affinelengthexpressionepsilon} to compute the length $\ell^a_{\varepsilon}$, we have a chain
\begin{equation}\pi^\lambda w <\pi^\lambda rw <\pi^\mu s_\beta rw <\pi^\mu s_\beta w.\end{equation}
Since $\ell_{v}(rw)-\ell_{v}(w)=\ell(v^{-1}rw)-\ell(v^{-1}w)$ for $v,r,w\in W$, Lemma~\ref{Lemma : non_minimal_distance} guaranties the existence  of $r$, which proves the proposition.\end{proof}

In Figure~\ref{fig:galerieminimale} below, we give an example of a chain constructed this way in the $A_1$-affine case (with Cartan matrix $\begin{pmatrix}
    2 & -2 \\ -2 & 2
\end{pmatrix})$.

\begin{figure}[h!]

    \centerline{\includegraphics[width=\textwidth]{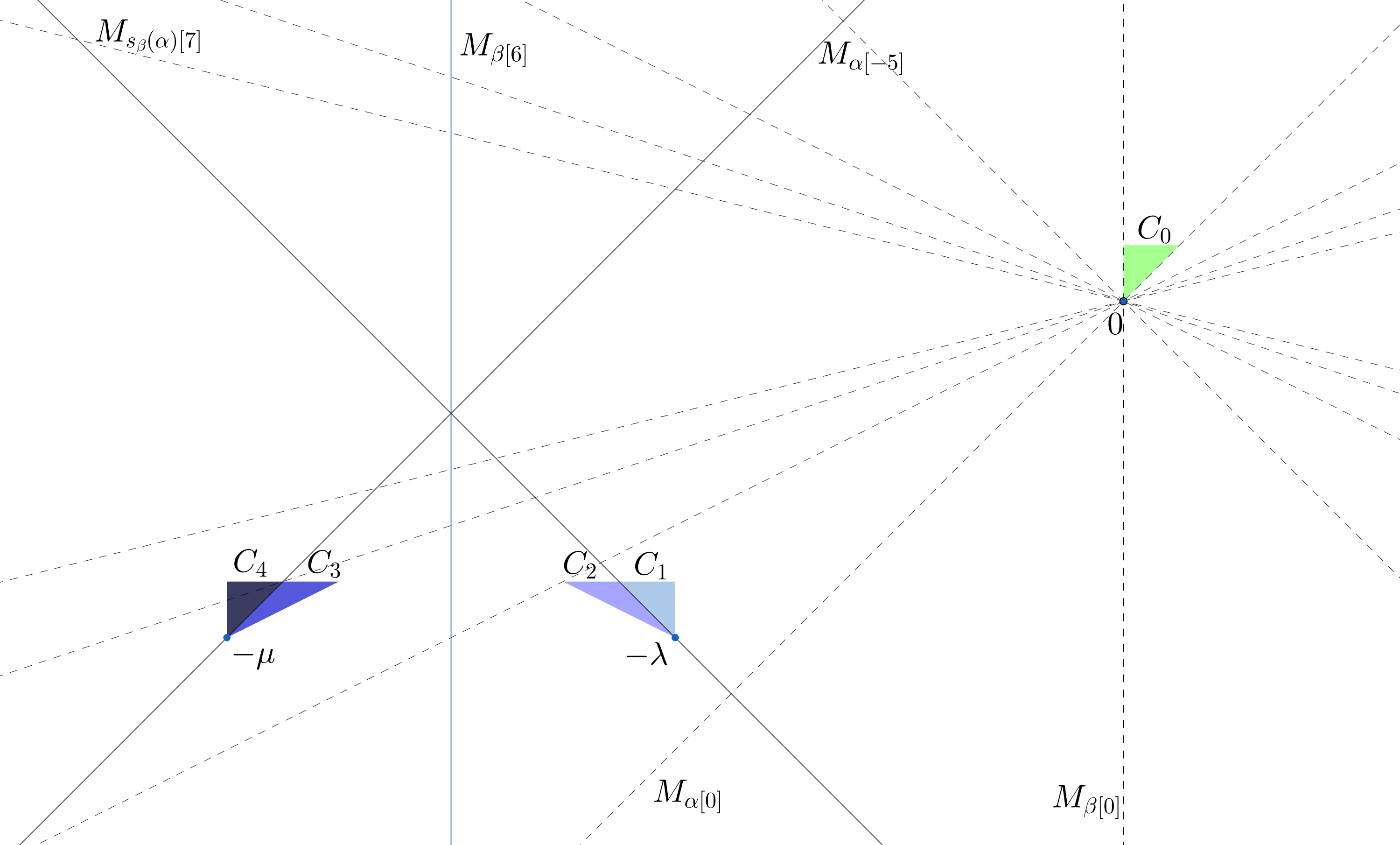}}
    \caption{Example of a chain constructed as in Proposition~\ref{covers_minimal_galleries}.}
    \label{fig:galerieminimale}

\end{figure}
 In this example, $\alpha$ and $\beta$ are the simple roots of a $A_1$-affinized root system, and we have chosen $\lambda$,$w$ and $\beta[n]$ such that $v^\lambda =s_\alpha$, $v^\mu=s_\alpha s_\beta$ and $w=s_\beta$. $\pi^\lambda w$ corresponds to the alcove $C_1$ in light blue, and its image $\pi^\mu s_\beta w$ by $s_{\beta[6]}$ corresponds to $C_4$. Since $r=s_\beta s_\alpha s_\beta$ verifies $d(v^\lambda,rw)=s_\alpha s_\beta s_\alpha>d(v^\lambda,w)=s_\alpha s_\beta$, and $d( s_\beta v^\mu,rw)=s_\beta<s_\beta s_\alpha =d(s_\beta v^\mu,w)$, there is a chain $\pi^\lambda w < \pi^\lambda r w <\pi^\mu s_\beta rw < \pi^\mu s_\beta w$ which corresponds to the sequence of alcoves $(C_1,C_2,C_3,C_4)$ on the figure.

\begin{Remark}

Let $v_0,w_0\in W$, $\mu_0\in Y$ and $\alpha_0\in \Phi$. To produce chains, note that Formula~\eqref{eq: leftaction} applied with the affinized reflection $s_{v_0(\alpha_0)[m+\langle \mu_0,\alpha_0\rangle]}$ to $\pi^{v_0(\mu_0)}w_0$ gives
\begin{equation}\label{eq : inequality condition}
    \forall m \in \mathbb Z \setminus \llbracket-\langle \mu_0,\alpha_0\rangle,0 \rrbracket,\; \pi^{v_0(\mu_0+m\alpha_0^\vee)}s_{v_0(\alpha_0)}w_0>\pi^{v_0(\mu_0)}w_0.
\end{equation} 
Moreover, applying the affinized reflection $s_{v_0(\alpha_0)[-m]}$ to $\pi^{v_0(\mu_0)}w_0$ instead we obtain 
\begin{equation} \label{eq : inequality conditionbis}
   \forall m \in \mathbb Z \setminus \llbracket-\langle \mu_0,\alpha_0\rangle,0 \rrbracket,\; \pi^{v_0s_{\alpha_0}(\mu_0+m\alpha_0^\vee)}s_{v_0(\alpha_0)}w_0>\pi^{v_0(\mu_0)}w_0 .
\end{equation}
For $m\in \rrbracket -\langle \mu_0,\alpha_0\rangle, 0\llbracket$ the inequalities are reversed. The limit cases $m\in \{ -\langle \mu_0,\alpha_0\rangle,0\}$ need to be treated more carefully, they depend on the sign of the root $v_0(\alpha_0)$ (because Formula~\eqref{eq: leftaction} holds for the affinized reflection $s_{v_0(\alpha_0)[0]}$ only if $v_0(\alpha_0)\in \Phi_+$), on the sign of $\langle \mu_0,\alpha_0\rangle$ and on the vectorial element $w_0$.
\end{Remark}
 \subsection{Another expression for the affinized length difference}\label{subsection: 3.2}

 Outside of the case of vectorial covers dealt with in Theorems~\ref{Theorem: constantcoweight} and~\ref{non_reg_covers_same_orbit}, if we write $\bx= \pi^{v\lambda}w$ with $\lambda \in Y^{++}, v,w\in W$ with $v$ of minimal length in $vW_\lambda$, by Proposition~\ref{n_is_small} the only covers are of the form $\by\in \{\pi^{v(\lambda+\beta^\vee)}s_{v(\beta)} w, \pi^{vs_\beta(\lambda+\beta^\vee)} s_{v(\beta)} w\}$ for some $\beta \in \Phi_+$, so the rest of this paper is dedicated to covers of this sort.
 \begin{Notation}\label{Notation: properly affine covers}
     From now on, unless stated otherwise, we use the following notation:
     \begin{enumerate}
         \item $\lambda\in Y^{++}$ is a dominant coweight
         \item $\beta\in\Phi_+$ is a positive root
         \item $v\in W^\lambda$ is the minimal representative of a $W_\lambda$-coset.
         \item $w\in W$ is any element of $W$.
         \item $\bx=\pi^{v(\lambda)}w$ and $\by\in \{\pi^{v(\lambda+\beta^\vee)}s_{v(\beta)}w,\; \pi^{vs_\beta(\lambda+\beta^\vee)}s_{v(\beta)}w\}$ are elements of $W^+$.
              \end{enumerate}

In particular, $\lambda$ is a dominant coweight. This choice is made in order to avoid the heavier notation $\lambda^{++}$. Recall from Notation~\ref{Notation : paraboliccosets} that $W^\lambda$ is the set of minimal coset representatives of $W/W_\lambda$, where $W_\lambda$ is the standard parabolic subgroup $\{w\in W\mid w(\lambda)=\lambda\}$. 
 \end{Notation}
 In this subsection, we give another expression for the length difference $\ell^a(\by)-\ell^a(\bx)$.

The two following lemmas give information on the vectorial chamber of $v(\lambda+\beta^\vee)$.

\begin{Lemma}\label{Lemma : l(su)=l(s)+l(u)}
Let $\lambda\in Y^{++}$ be a dominant coweight and let $\beta \in \Phi_+$ be a positive root such that $\lambda+\beta^\vee \in Y^+$. Let $u\in W$ be such that $\lambda+\beta^\vee$ belongs to the vectorial chamber $C^v_u$, that is to say $u^{-1}(\lambda+\beta^\vee)\in Y^{++}$. Then \begin{equation}\ell(s_\beta u)=\ell(s_\beta)+\ell(u).\end{equation}
\end{Lemma}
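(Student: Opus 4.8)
The statement is purely about the finite Coxeter-geometry of $W^v$: we must show that if $\lambda\in Y^{++}$, $\beta^\vee\in\Phi_+^\vee$ and $u\in W^v$ satisfies $u^{-1}(\lambda+\beta^\vee)\in Y^{++}$, then $\ell(s_\beta u)=\ell(s_\beta)+\ell(u)$. By the inversion-set description of length, $\ell(s_\beta u)=\ell(s_\beta)+\ell(u)$ is equivalent to $\Inv(u^{-1})\subseteq\Inv(u^{-1}s_\beta)$ (i.e. $u$ is on a minimal gallery from $1$ to $s_\beta u$, or equivalently the wall $M_\beta$ separates $C^v_u$ from the fundamental chamber in a way compatible with adding the gallery from $1$ to $s_\beta$), but I think the cleanest route is via the geometry of walls. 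I would first recall that $\ell(s_\beta u)=\ell(u)+1$ or $\ell(u)-1$ according to whether $C^v_u$ and $C^v_1$ lie on the same side of $M_\beta$ or not (this is the $\ell_v$ interpretation given before Corollary \ref{ordregeom} with $v=1$); and more generally $\ell(s_\beta u)=\ell(s_\beta)+\ell(u)$ holds if and only if every wall separating $C^v_1$ from $C^v_{s_\beta}$ also separates $C^v_1$ from $C^v_{s_\beta u}$, equivalently $\Inv(s_\beta)\subseteq \Inv(s_\beta u)$ seen as sets of walls, equivalently there is a minimal gallery from $C^v_1$ to $C^v_{s_\beta u}$ passing through $C^v_{s_\beta}$.

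\textbf{Key steps.} First I would translate the hypothesis $u^{-1}(\lambda+\beta^\vee)\in Y^{++}$ into: $\lambda+\beta^\vee$ lies in the closed vectorial chamber $\overline{C^v_u}$. Since $\lambda$ is dominant it lies in $\overline{C^v_1}=\overline{C^v_0}$. Now consider the segment $[\lambda,\lambda+\beta^\vee]$; I claim that for a suitable (namely minimal-length) $u$ this segment realizes a minimal gallery, i.e. crosses each wall it meets exactly once, and the set of walls it crosses is exactly $\Inv(u^{-1})$ together with the walls separating... — actually the clean statement is: the walls strictly separating $C^v_1$ from $C^v_u$ are precisely the walls $M_\gamma$ ($\gamma\in\Phi_+$) with $\langle\lambda,\gamma\rangle\geq 0$ (true since $\lambda$ dominant) and $\langle\lambda+\beta^\vee,\gamma\rangle<0$... — wait, more carefully: $\gamma\in\Inv(u^{-1})$ iff $u^{-1}(\gamma)\in\Phi_-$ iff $\langle \lambda+\beta^\vee, u^{-1\,*}... \rangle$; the concrete criterion is $\langle\lambda+\beta^\vee,\gamma\rangle\leq 0$ with care at the boundary. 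The point is: any wall $M_\gamma$ with $\gamma\in\Inv(s_\beta)$ satisfies $\langle\beta^\vee,\gamma\rangle>0$ (by the Corollary after Lemma \ref{SubLemma: height}), hence $s_\beta(\gamma)\in\Phi_-$, and combined with $\langle\lambda,\gamma\rangle\geq 0$ this forces $\langle\lambda+\beta^\vee,\gamma\rangle$ to have the sign making $\gamma\in\Inv(u^{-1})$ — so $\Inv(s_\beta)\subseteq\Inv(u^{-1})$. Then the standard fact $\Inv(s_\beta)\subseteq\Inv(u^{-1})\iff \ell(u^{-1})=\ell(u^{-1}s_\beta^{-1})$... hmm, I need to be careful about which side the reflection sits on: I want $\ell(s_\beta u)=\ell(s_\beta)+\ell(u)$, and $\ell(s_\beta u)=\ell((s_\beta u)^{-1})=\ell(u^{-1}s_\beta)$; by the subword/deletion characterization this equals $\ell(u^{-1})+\ell(s_\beta)$ iff $u^{-1}$ has a reduced word whose prefix is a reduced word for... no — the correct statement is $\ell(u^{-1}s_\beta)=\ell(u^{-1})+1$ iff $u^{-1}(\beta)... $. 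Rather than belabor this, the robust formulation is: $\ell(xy)=\ell(x)+\ell(y)$ for $x=s_\beta$, $y=u$ iff $\Inv(y^{-1})\cap \Inv(x)^{\,\text{op}}=\emptyset$ appropriately; the literature fact I will cite is $\ell(s_\beta u)=\ell(s_\beta)+\ell(u)\iff \Inv(s_\beta)\subseteq \Inv(u^{-1})$ (this is exactly the "$u$ lies on a minimal gallery from $1$ to $s_\beta u$" statement, reading walls between $1$ and $s_\beta$). So the proof reduces to: show $\Inv(s_\beta)\subseteq \Inv(u^{-1})$.

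\textbf{Carrying it out.} So: let $\gamma\in\Inv(s_\beta)$, meaning $\gamma\in\Phi_+$ and $s_\beta(\gamma)\in\Phi_-$; by the displayed Corollary $\langle\beta^\vee,\gamma\rangle>0$. Then $\langle \lambda+\beta^\vee,\gamma\rangle = \langle\lambda,\gamma\rangle + \langle\beta^\vee,\gamma\rangle$. Hmm — this is $\geq 0 + \text{positive} >0$, which says $\gamma\notin\Inv(u^{-1})$, the opposite of what I wanted! So I have the sides mixed up. Let me reconsider: $u^{-1}(\lambda+\beta^\vee)\in Y^{++}$ means $\langle u^{-1}(\lambda+\beta^\vee),\alpha_i\rangle\geq 0$ for all $i$, i.e. $\langle \lambda+\beta^\vee, u(\alpha_i)\rangle\geq 0$, i.e. $\lambda+\beta^\vee$ pairs nonnegatively with every element of $u(\Phi_+)$, equivalently $\langle\lambda+\beta^\vee,\gamma\rangle\geq 0$ for $\gamma\in\Phi_+\setminus\Inv(u^{-1})$ and $\leq 0$ for $\gamma\in\Inv(u^{-1})$. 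Now I want to show $\Inv(s_\beta)\subseteq\Inv(u^{-1})$: take $\gamma\in\Inv(s_\beta)$, so $\langle\beta^\vee,\gamma\rangle>0$; to conclude $\gamma\in\Inv(u^{-1})$ I'd need $\langle\lambda+\beta^\vee,\gamma\rangle\leq 0$, but both summands are $\geq 0$ with the second $>0$, giving $>0$ — contradiction. Therefore $\Inv(s_\beta)\cap\Inv(u^{-1})=\emptyset$, which gives instead $\ell(u^{-1}s_\beta)=\ell(u^{-1})+\ell(s_\beta)$? No — $\Inv(s_\beta)\cap\Inv(u^{-1})=\emptyset$ means no wall between $1$ and $s_\beta$ lies between $1$ and $u$; that is the condition for $\ell(s_\beta u^{-1}... )$. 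I realize the bookkeeping of left-versus-right reflections and of $\Inv(w)$ versus $\Inv(w^{-1})$ is exactly the main obstacle here, and I should fix conventions once at the start and push symbols carefully. The \emph{expected main obstacle}, then, is precisely this conversion: identifying the correct inversion-set containment, $\Inv(s_\beta)\subseteq\Inv(u)$ vs. $\Inv(u^{-1})$, that is equivalent to $\ell(s_\beta u)=\ell(s_\beta)+\ell(u)$, and then checking that the hypothesis $u^{-1}(\lambda+\beta^\vee)\in Y^{++}$ together with $\langle\beta^\vee,\gamma\rangle>0$ for $\gamma\in\Inv(s_\beta)$ (from the Corollary after Lemma \ref{SubLemma: height}) and $\langle\lambda,\gamma\rangle\geq 0$ (dominance) yields that containment. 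Once the conventions are straight, the argument is a one-line positivity computation: for $\gamma\in\Phi_+$ with $s_\beta\gamma\in\Phi_-$, write $s_\beta\gamma=\gamma-\langle\beta^\vee,\gamma\rangle\beta$; apply $u$ appropriately and pair against the dominant/sub-dominant data to read off the sign of $\langle\lambda+\beta^\vee,\cdot\rangle$, concluding $\gamma$ lies in the relevant inversion set, hence length is additive.
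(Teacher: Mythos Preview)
Your computation is correct and is essentially the paper's argument; the only genuine gap is that you never settle the Coxeter-group bookkeeping, and in fact you dismiss the right conclusion at the very moment you reach it.

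The criterion you need is the standard one (with the paper's convention $\Inv(w)=\{\alpha\in\Phi_+:w\alpha\in\Phi_-\}$):
\[
\ell(xy)=\ell(x)+\ell(y)\quad\Longleftrightarrow\quad \Inv(x)\cap\Inv(y^{-1})=\emptyset.
\]
Indeed, using left inversions $N(w):=\Inv(w^{-1})$ one has $N(xy)=N(x)\,\triangle\,|xN(y)|$, and the union is disjoint iff $xN(y)\subset\Phi_+$, i.e.\ iff no $\alpha\in N(y)=\Inv(y^{-1})$ lies in $\Inv(x)$. For $x=s_\beta$ and $y=u$ this reads
\[
\ell(s_\beta u)=\ell(s_\beta)+\ell(u)\quad\Longleftrightarrow\quad \Inv(s_\beta)\cap\Inv(u^{-1})=\emptyset,
\]
which is a \emph{disjointness} condition, not the containment $\Inv(s_\beta)\subseteq\Inv(u^{-1})$ you first aimed for. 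You actually establish this disjointness: for $\gamma\in\Inv(s_\beta)$ you have $\langle\beta^\vee,\gamma\rangle>0$ and $\langle\lambda,\gamma\rangle\geq0$, hence $\langle\lambda+\beta^\vee,\gamma\rangle>0$; while for $\gamma\in\Inv(u^{-1})$ one has $u^{-1}\gamma\in\Phi_-$ and $u^{-1}(\lambda+\beta^\vee)\in Y^{++}$, so $\langle\lambda+\beta^\vee,\gamma\rangle=\langle u^{-1}(\lambda+\beta^\vee),u^{-1}\gamma\rangle\leq0$. These are incompatible, so the two inversion sets are disjoint and the lemma follows. When you wrote ``$\Inv(s_\beta)\cap\Inv(u^{-1})=\emptyset$, which gives instead $\ell(u^{-1}s_\beta)=\ell(u^{-1})+\ell(s_\beta)$?'' you were done: $\ell(u^{-1}s_\beta)=\ell(s_\beta u)$ and $\ell(u^{-1})=\ell(u)$.

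For comparison, the paper runs the identical positivity check from the other side: it takes $\alpha\in\Inv(u^{-1})$, deduces $\langle\lambda+\beta^\vee,\alpha\rangle\leq0$, hence $\langle\beta^\vee,\alpha\rangle\leq0$ (dominance of $\lambda$), hence $s_\beta(\alpha)=\alpha-\langle\beta^\vee,\alpha\rangle\beta\in\Phi_+$, i.e.\ $\alpha\notin\Inv(s_\beta)$. It then packages the conclusion as a step-by-step length increase $\ell(s_{\tau_{k+1}}\cdots s_{\tau_1}s_\beta)=\ell(s_{\tau_k}\cdots s_{\tau_1}s_\beta)+1$ along a reduced word $u=s_{\tau_1}\cdots s_{\tau_n}$, rather than invoking the disjointness criterion abstractly. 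The content is the same; your version would be a clean one-paragraph proof once you commit to the correct criterion.
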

\begin{proof}
    Let $s_{\tau_1}\ldots  s_{\tau_n}$ be a reduced expression of $u$, so that $\ell(u)=n$ and $$\Inv(u^{-1})=\{\tau_1,s_{\tau_1}(\tau_2),\dots,s_{\tau_1}\ldots s_{\tau_{n-1}}(\tau_n)\}.$$ We show that $s_{\tau_{k+1}}\ldots s_{\tau_1}s_\beta >s_{\tau_k}\ldots s_{\tau_1}s_\beta$ for all $k\in \llbracket0, n-1\rrbracket$.
    
     For any $\alpha \in \Inv(u^{-1})$ we have $\langle \lambda+\beta^\vee,\alpha\rangle \leq 0$ (because $\lambda+\beta^\vee \in C^v_u$). Since $\lambda$ is dominant this implies $\langle \beta^\vee,\alpha \rangle \leq 0$.
     
 Let $k\in\llbracket 0,n-1\rrbracket$. Then $\langle \beta^\vee,s_{\tau_1}\ldots s_{\tau_k}(\tau_{k+1})\rangle \leq 0$, so:  $s_\beta(s_{\tau_1}\ldots s_{\tau_k}(\tau_{k+1}))=s_{\tau_1}\ldots s_{\tau_k}(\tau_{k+1})-\langle \beta^\vee,s_{\tau_1}\ldots s_{\tau_k}(\tau_{k+1})\rangle \beta$ is a positive root as a sum of positive roots. Thus $s_{\tau_{k+1}}\ldots s_{\tau_1}s_\beta >s_{\tau_k}\ldots s_{\tau_1}s_\beta$ for any $k\in\llbracket 0,n-1\rrbracket$ and therefore $\ell(s_\beta u)=\ell(u^{-1}s_\beta)=n+\ell(s_\beta)=\ell(s_\beta)+\ell(u)$.
\end{proof}

\begin{Lemma}\label{Lemma : l(vu)=l(v)+l(u)}
    Let $\lambda \in Y^{++}$ be a dominant coweight and let $\beta\in\Phi_+$ be a positive root such that $\lambda+\beta^\vee \in Y^+$. Let $v\in W^\lambda,\,w\in W$ and let $u$ denote the element $v^{\lambda+\beta^\vee}$.

    Then, if $\pi^{v(\lambda+\beta^\vee)}s_{v(\beta)}w$ (resp. $\pi^{vs_\beta(\lambda+\beta^\vee)}s_{v(\beta)}w$) covers $\bx=\pi^{v(\lambda)}w$, 
    \begin{equation}\ell(vu)=\ell(v)+\ell(u) \; \text{ (resp. } \ell(vs_\beta u)=\ell(v)+\ell(s_\beta u)\; \text{and } \ell(vs_\beta)=\ell(v)+\ell(s_\beta)\text{)}.\end{equation}
\end{Lemma}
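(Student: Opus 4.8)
The plan is to prove both statements simultaneously by exploiting the hypothesis that $\by$ is a \emph{cover} of $\bx$, i.e. that there is no intermediate element. The key observation is that if $\ell(vu) < \ell(v) + \ell(u)$ (where $u = v^{\lambda+\beta^\vee}$), then $v$ is \emph{not} on a minimal gallery from $1$ to $vu$ in $W^v$, which should let me insert an intermediate element of $W^+$ between $\bx$ and $\by$, contradicting that $\by$ covers $\bx$. Concretely, I would first observe that the reverse inequality $\ell(vu) \leq \ell(v) + \ell(u)$ always holds by subadditivity of the length, so the content is to rule out strict inequality.

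First I would set up notation: write $\mu = \lambda + \beta^\vee \in Y^+$, so $u = v^\mu$ is the minimal-length element with $u \mu^{++} = \mu$, and note $\mu^{++} \neq \lambda^{++}$ (this is the varying-dominance-class case; if $\mu^{++}=\lambda^{++}$ the situation was handled in Theorem~\ref{non_reg_covers_same_orbit}, and the claim of the lemma is about covers with $\mu^{++}\neq\lambda^{++}$ anyway). For the first case, $\by = \pi^{v\mu}s_{v(\beta)}w = s_{\beta[n]}\bx$ for the appropriate minimal $n$ from Proposition~\ref{n_is_small}, and I want $\ell(vu) = \ell(v) + \ell(u)$. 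Suppose not; then $\ell(vu) < \ell(v)+\ell(u)$, so writing a reduced expression $u = s_{\tau_1}\cdots s_{\tau_m}$ there is some partial product $u_k = s_{\tau_1}\cdots s_{\tau_k}$ such that $vu_k$ is \emph{not} on a minimal gallery from $1$ to $vu$, equivalently $u_k$ is not on a minimal gallery from $1$ to $u$ relative to the base chamber $v^{-1}$ — i.e. $v$ is not on a minimal gallery from $1$ to $vu$ in $W^v$. I would then invoke Proposition~\ref{covers_minimal_galleries} with the roles arranged so that the hypothesis "$s_\beta v^\mu$ is not on a minimal gallery from $w$ to $v^\lambda$" is met. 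The subtle point is to translate "$\ell(vu) < \ell(v)+\ell(u)$" into exactly the minimal-gallery failure that Proposition~\ref{covers_minimal_galleries} needs; this requires identifying $v = v^\lambda$ (true since $v\in W^\lambda$), matching $v^\mu$ with $u$ or $s_\beta u$ depending on the case, and checking the gallery condition is equivalent via $W^v$-invariance of the vectorial distance $d^{W^v}$.

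For the second case, $\by = \pi^{vs_\beta\mu}s_{v(\beta)}w$, the target is $\ell(vs_\beta u) = \ell(v) + \ell(s_\beta u)$. Here I would use Lemma~\ref{Lemma : l(su)=l(s)+l(u)}: since $\mu = \lambda + \beta^\vee$ with $\lambda$ dominant and $u$ the minimal-length element sending $\mu^{++}$ to $\mu$, we have $u^{-1}\mu \in Y^{++}$, so that lemma gives $\ell(s_\beta u) = \ell(s_\beta) + \ell(u)$; thus the claim $\ell(vs_\beta u) = \ell(v)+\ell(s_\beta u)$ becomes a statement about the element $s_\beta u = v^{s_\beta\mu}$ directly, and reduces, just as in case one, to showing $v$ lies on a minimal gallery from $1$ to $v\cdot(s_\beta u)$ in $W^v$. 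Again the failure of this would, via Proposition~\ref{covers_minimal_galleries} (with $v^\mu$ there playing the role so that $s_\beta v^\mu = s_\beta(s_\beta u) = u$... — I would need to be careful here about which of $u$, $s_\beta u$ is $v^\mu$ versus $v^{s_\beta \mu}$, and arrange the application accordingly), produce an intermediate element contradicting the cover hypothesis.

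The main obstacle I anticipate is the bookkeeping in the second case: correctly identifying which element ($u$ or $s_\beta u$) is $v^\mu$ versus $v^{s_\beta\mu}$, and verifying that the ``not on a minimal gallery'' hypothesis of Proposition~\ref{covers_minimal_galleries} is genuinely equivalent to the strict length inequality I want to exclude — this hinges on the geometric interpretation (Corollary~\ref{ordregeom}) of $<_v$ and on the fact, from standard Coxeter theory, that $\ell(vu) = \ell(v)+\ell(u)$ iff $v$ is on a minimal gallery from $1$ to $vu$, i.e. iff $\Inv(v^{-1}) \subseteq \Inv((vu)^{-1})$. A secondary technical point is making sure the intermediate element produced by Proposition~\ref{covers_minimal_galleries} actually lies strictly between $\bx$ and $\by$ and is distinct from both, but that is already handled inside the proof of that proposition, so I can cite it directly.
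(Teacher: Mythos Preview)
There is a genuine gap. You correctly note that $\ell(vu)<\ell(v)+\ell(u)$ means $v$ is not on a minimal gallery from $1$ to $vu$. But this is \emph{not} the hypothesis of Proposition~\ref{covers_minimal_galleries}: in the notation there (applied to $\bx=\pi^{v(\lambda)}w$ and $\by=\pi^{v(\lambda+\beta^\vee)}s_{v(\beta)}w$) the condition reads ``$s_{v(\beta)}\,v^{v(\lambda+\beta^\vee)}$ is not on a minimal gallery from $w$ to $v$''. That condition involves $w$, which is arbitrary, and is a statement about a gallery based at $w$; the inequality $\ell(vu)<\ell(v)+\ell(u)$ is a statement about a gallery from $1$ to $vu$ through $v$, with no reference to $w$. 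No amount of $W^v$-invariance of $d^{W^v}$ turns one into the other, so Proposition~\ref{covers_minimal_galleries} does not apply.

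The paper argues directly. From $\ell(vu)<\ell(v)+\ell(u)$ one obtains (via Lemma~\ref{Lemma : non_minimal_distance}, or rather the wall it produces) a reflection $r$ with $rv<v$ and $rvu>vu$, and then builds the chain
\[
\pi^{v(\lambda)}w \;<\; \pi^{rv(\lambda)}rw \;<\; \pi^{rv(\lambda+\beta^\vee)}\,r s_{v(\beta)}w \;<\; \pi^{v(\lambda+\beta^\vee)}s_{v(\beta)}w,
\]
using Corollary~\ref{affine_to_relative} at the two ends and Formula~\eqref{eq : inequality condition} in the middle. The rightmost inequality hides a subtlety you have not addressed: one needs $r\,v^{v(\lambda+\beta^\vee)} > v^{v(\lambda+\beta^\vee)}$, where $v^{v(\lambda+\beta^\vee)}$ is the \emph{minimal} representative of $vuW_J$ (with $W_J=W_{(\lambda+\beta^\vee)^{++}}$), not $vu$ itself. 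The paper handles this by observing that $vu=\pr_{vuW_J}(v)$ and that the wall $M_r$ separates $v$ from $vu$, hence from the entire convex set $vuW_J$; thus $r(vu\tilde u)>vu\tilde u$ for every $\tilde u\in W_J$. The second case ($\by=\pi^{vs_\beta(\lambda+\beta^\vee)}s_{v(\beta)}w$) is parallel with $s_\beta u$ in place of $u$.
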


\begin{proof}
    To simplify the notation, write $W_J$ for $W_{(\lambda+\beta^\vee)^{++}}$. Note that, with the notation of Definition~\ref{def: projparabolic}, $vu=\pr_{vuW_J}(v)$ since $u$ is the element of minimal length in $uW_J$.
    
    Suppose by contradiction that $\pi^{v(\lambda+\beta^\vee)}s_{v(\beta)}w$ covers $\bx$ with $\ell(vu)<\ell(v)+\ell(u)$ . Then $d^\mathbb N(1,vu)=\ell(vu)<d^\mathbb N (1,v)+d^\mathbb N (v,vu)=\ell(v)+\ell(u)$, so $v$ is not on a minimal gallery from $1$ to $vu$. Therefore by Lemma~\ref{Lemma : non_minimal_distance}, there is a reflection $r\in W$ such that $d(1,rvu)>d(1,vu)$ and $d(1,rv)<d(1,v)$, that is to say $rv<v$ and $rvu>vu$.
    
    By minimality of $u$, $r$ is not in $vuW_J (vu)^{-1}$: otherwise $rvu \in vuW_J$ verifies $d(v,rvu)=d(rv,vu)<d(v,vu)$, because foldings reduce the vectorial distance and $v,vu$ are on different sides of the wall $M_r$ associated to $r$. 
    
    Since $vu$ is the projection of $v$ on $vuW_J$ which is convex (see \cite[Lemma 2.10]{ronan1989lectures}), and since the wall $M_r$ separates $v$ and $vu$, any element of $vuW_J$ is on the same side of the wall $M_r$ as $vu$, so $rvu\tilde u > vu\tilde u $ for any $\tilde u \in W_J$. In particular, let $\tilde u \in W_J$ be such that $rvu\tilde u$ is the minimal coset representative of $rvuW_J$. Then by Proposition~\ref{affine_to_relative0}, since $rvu\tilde u > vu\tilde u$, we have \begin{equation}\pi^{rv(\lambda+\beta^\vee)}rs_{v(\beta)}w=\pi^{rvu\tilde u((\lambda+\beta^\vee)^{++})}rs_{v(\beta)}w<\pi^{vu\tilde u((\lambda+\beta^\vee)^{++})}s_{v(\beta)}w=\pi^{v(\lambda+\beta^\vee)}s_{v(\beta)}w.\end{equation}
    
    Therefore by Proposition~\ref{affine_to_relative0} for the left and right hand side inequalities and Formula~\eqref{eq : inequality condition} applied with $(\mu_0,\alpha_0,v_0,w_0,m)=(\lambda,\beta,rv,rw,1)$ for the middle one, we have a chain
    \begin{equation}\pi^{v(\lambda)}w < \pi^{rv(\lambda)}rw < \pi^{rv(\lambda+\beta^\vee)} s_{rv(\beta)}rw=\pi^{rv(\lambda+\beta^\vee)} rs_{v(\beta)}w< \pi^{v(\lambda+\beta^\vee)}s_{v(\beta)}w.\end{equation}
    Therefore if $\pi^{v(\lambda+\beta^\vee)}s_{v(\beta)}w$ covers $\bx$ then $\ell(vu)=\ell(v)+\ell(u)$.

    Now assume by contradiction that $\pi^{vs_\beta (\lambda+\beta^\vee)}s_{v(\beta)}w$ covers $\pi^\lambda w$ with $\ell(vs_\beta u)<\ell(v)+\ell(s_\beta u)$. Then, similarly there is a reflection $r\in W$ such that $rv<v$ and $rvs_\beta u\tilde u>vs_\beta u\tilde u$. By Proposition~\ref{affine_to_relative0} for the left and right hand side inequalities and Formula~\eqref{eq : inequality conditionbis} applied with $(\mu_0,\alpha_0,v_0,w_0,m)=(\lambda,\beta,rv,rw,1)$ for the middle one, we have a chain
    \begin{equation}\pi^{v(\lambda)}w < \pi^{rv(\lambda)}rw < \pi^{rvs_\beta(\lambda+\beta^\vee)} s_{rv(\beta)}rw=\pi^{rvs_\beta(\lambda+\beta^\vee)} rs_{v(\beta)}w< \pi^{vs_\beta(\lambda+\beta^\vee)}s_{v(\beta)}w.\end{equation}
    We deduce that, if $\pi^{vs_\beta(\lambda+\beta^\vee)}s_{v(\beta)}w$ covers $\bx$ then $\ell(vs_\beta u)=\ell(v)+\ell(s_\beta u)$. By Lemma~\ref{Lemma : l(su)=l(s)+l(u)} this is $\ell(v)+\ell(s_\beta)+\ell(u)$, by the triangle inequality we deduce that $\ell(v)+\ell(s_\beta)\geq \ell(vs_\beta)\geq \ell(vs_\beta u)-\ell(u)=\ell(v)+\ell(s_\beta)$ and we obtain the second equality in this case.
\end{proof}

\begin{Proposition}\label{Prop : length expression}
     Let $\lambda \in Y^{++}, v\in W^\lambda,w\in W$. Let $\beta \in \Phi_+$ be a positive root such that $\lambda+\beta^\vee \in Y^+$ and let $u$ denote $v^{\lambda+\beta^\vee}\in W^{(\lambda+\beta^\vee)^{++}}$.
     
     Suppose that $\by \in \{ \pi^{v(\lambda+\beta^\vee)}s_{v(\beta)}w, \pi^{vs_\beta(\lambda+\beta^\vee)}s_{v(\beta)}w\}$ covers $\bx = \pi^{v(\lambda)}w$. Then \begin{equation}\label{eq: length difference generalcase}\ell^a(\by)-\ell^a(\bx) = \Bigg(2\htt(\beta^\vee) -\ell(s_\beta)\Bigg)-2\Bigg(\ell(u)+\sum\limits_{\tau \in \Inv(u^{-1})} \langle \lambda+\beta^\vee,\tau\rangle \Bigg).\end{equation}
     
 \end{Proposition}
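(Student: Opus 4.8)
The plan is to compute $\ell^a(\by) - \ell^a(\bx)$ directly from the formula $\ell^a(\pi^\lambda w) = 2\htt(\lambda^{++}) + \ell_{v^\lambda}(w)$ of Proposition \ref{Prop : affine_length_expression0}, separating the ``height part'' $2\htt((\lambda+\beta^\vee)^{++}) - 2\htt(\lambda^{++})$ from the ``relative length part'' $\ell_{v^{\mu}}(s_{v(\beta)}w) - \ell_{v^\lambda}(w)$, where $\mu$ is $v(\lambda+\beta^\vee)$ or $vs_\beta(\lambda+\beta^\vee)$. First I would note that $\mu^{++} = (\lambda+\beta^\vee)^{++}$ in both cases, so the height part is the same either way; applying Corollary \ref{Lemma : height of general coweight} with the element $u = v^{\lambda+\beta^\vee}$ (which satisfies $(\lambda+\beta^\vee) = u(\lambda+\beta^\vee)^{++}$, hence also $v(\lambda+\beta^\vee) = vu(\lambda+\beta^\vee)^{++}$, and similarly $vs_\beta(\lambda+\beta^\vee) = vs_\beta u (\lambda+\beta^\vee)^{++}$) gives
\[\htt((\lambda+\beta^\vee)^{++}) = \htt(\lambda+\beta^\vee) - \sum_{\tau \in \Inv(u^{-1})} \langle \lambda+\beta^\vee, \tau\rangle.\]
Since $\htt(\lambda+\beta^\vee) - \htt(\lambda) = \htt(\beta^\vee)$ and $2\htt(\beta^\vee) = \sum_{\gamma\in\Inv(s_\beta)}\langle\beta^\vee,\gamma\rangle$ by the Corollary following Lemma \ref{SubLemma: height}, the height contribution is exactly $\sum_{\gamma\in\Inv(s_\beta)}\langle\beta^\vee,\gamma\rangle - 2\sum_{\tau\in\Inv(u^{-1})}\langle\lambda+\beta^\vee,\tau\rangle$, which already produces two of the four terms in \eqref{eq: length difference generalcase}.

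Next I would handle the relative length part. Using Proposition \ref{lengthexplicit}, $\ell_{v^\lambda}(w) = \ell((v^\lambda)^{-1}w) - \ell(v^\lambda)$; and since $v\in W^\lambda$ we have $v^\lambda = v$, so this is $\ell(v^{-1}w) - \ell(v)$. For the target, $v^\mu$ is the minimal-length representative of the coset $\mu W_{(\lambda+\beta^\vee)^{++}}$: in the first case $\mu W_J = vuW_J$ so $v^\mu = vu$ (as $u = u^J$, hence $vu = (vu)^J$ provided no reduction occurs), and in the second $v^\mu = vs_\beta u$. Here is where Lemma \ref{Lemma : l(vu)=l(v)+l(u)} enters: the cover hypothesis forces $\ell(vu) = \ell(v) + \ell(u)$ (resp.\ $\ell(vs_\beta u) = \ell(v) + \ell(s_\beta u)$), which is exactly what guarantees $v^\mu = vu$ (resp.\ $vs_\beta u$) and lets us expand $\ell_{v^\mu}(s_{v(\beta)}w) = \ell((v^\mu)^{-1}s_{v(\beta)}w) - \ell(v^\mu)$. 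Writing $s_{v(\beta)}w = vs_\beta v^{-1}w$, one gets $(vu)^{-1}s_{v(\beta)}w = u^{-1}s_\beta v^{-1}w$ (resp.\ $(vs_\beta u)^{-1}s_{v(\beta)}w = u^{-1}v^{-1}w$), so the relative length part telescopes down to expressions in $\ell$, $\ell(v)$, $\ell(u)$, $\ell(s_\beta)$ once one also invokes Lemma \ref{Lemma : l(su)=l(s)+l(u)} (giving $\ell(s_\beta u) = \ell(s_\beta) + \ell(u)$, valid because $\lambda+\beta^\vee$ lies in a chamber $C^v_u$ with $u = v^{\lambda+\beta^\vee}$, and one checks $u^{-1}(\lambda+\beta^\vee)$ is dominant).

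The main obstacle I anticipate is bookkeeping: reconciling the two cases ($\by = \pi^{v(\lambda+\beta^\vee)}s_{v(\beta)}w$ versus $\by = \pi^{vs_\beta(\lambda+\beta^\vee)}s_{v(\beta)}w$) and verifying they genuinely yield the \emph{same} length difference \eqref{eq: length difference generalcase}, which a priori is surprising since the two $\by$'s are different elements. The resolution should be that the difference between $\ell_{vu}(s_{v(\beta)}w)$ and $\ell_{vs_\beta u}(s_{v(\beta)}w)$ is precisely compensated by the Lemma \ref{Lemma : l(su)=l(s)+l(u)} identity $\ell(s_\beta u) - \ell(u) = \ell(s_\beta)$: one case ``pays'' $\ell(s_\beta)$ through the $\ell(v^\mu) = \ell(v) + \ell(s_\beta u)$ term while the other pays it through the $(v^\mu)^{-1}s_{v(\beta)}w$ term. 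Care is also needed that all the reduced-factorization hypotheses ($v = v^\lambda$, $u = v^{\lambda+\beta^\vee}$, $\ell(vu) = \ell(v)+\ell(u)$) are in force — the first two are part of the statement and the third is supplied by Lemma \ref{Lemma : l(vu)=l(v)+l(u)} under the cover hypothesis — and that the $-\ell(s_\beta)$ sign in \eqref{eq: length difference generalcase} comes out correctly after collecting $+2\htt(\beta^\vee) = \sum_{\gamma}\langle\beta^\vee,\gamma\rangle$ against the $\ell(s_\beta)$ contributed by the relative-length computation. Once the algebra is organized, the proof is a direct substitution; no further geometric input beyond the cited lemmas is required.
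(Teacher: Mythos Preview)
Your overall architecture---splitting $\ell^a(\by)-\ell^a(\bx)$ into a height part and a relative-length part, and treating the height part via Corollary~\ref{Lemma : height of general coweight} and Formula~\eqref{Lemma : height as summand}---matches the paper and is correct. The relative-length computation, however, has two gaps.

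First, the identification $v^\mu=vu$ (resp.\ $vs_\beta u$) is not justified. Even with $u=u^J$ and $\ell(vu)=\ell(v)+\ell(u)$, the product $vu$ need not be minimal in $vuW_J$; the paper writes $v^{v(\lambda+\beta^\vee)}=(vu)^J=vu\tilde u$ for some $\tilde u\in W_J$ and carries $\tilde u$ through the computation (it cancels at the end via $\ell(u\tilde u)=\ell(u)+\ell(\tilde u)$ and $\ell(vu)=\ell(vu\tilde u)+\ell(\tilde u)$). Your parenthetical ``provided no reduction occurs'' is precisely the unproved point.

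Second, and more seriously, you never use Proposition~\ref{covers_minimal_galleries}, and without it the proof cannot close. After rewriting you are left with $\ell((v^\mu)^{-1}s_{v(\beta)}w)-\ell(v^{-1}w)$, i.e.\ $\ell(\tilde u^{-1}u^{-1}s_\beta v^{-1}w)-\ell(v^{-1}w)$ in the first case; there is no general identity that collapses this into $\ell(u)$ and $\ell(s_\beta)$. The paper uses the cover hypothesis via Proposition~\ref{covers_minimal_galleries}: since $\by$ covers $\bx$, the element $s_{v(\beta)}v^\mu=vs_\beta u\tilde u$ lies on a minimal gallery from $v$ to $w$, which gives exactly
\[
\ell(v^{-1}w)=\ell\big((vs_\beta u\tilde u)^{-1}w\big)+\ell(s_\beta u\tilde u),
\]
and only then do Lemmas~\ref{Lemma : l(su)=l(s)+l(u)} and~\ref{Lemma : l(vu)=l(v)+l(u)} finish the job, yielding $\ell_{v^\mu}(s_{v(\beta)}w)-\ell_v(w)=-\ell(s_\beta)-2\ell(u)$. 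The ``telescoping'' you describe is this minimal-gallery equality, and it is not automatic from the algebra you have written down.
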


\begin{proof}
Let $W_J$ denote the standard parabolic subgroup $W_{(\lambda+\beta^\vee)^{++}}$. Recall that $u=v^{\lambda+\beta^\vee}$ is the minimal element of $W$ such that $u((\lambda+\beta^\vee)^{++})=\lambda+\beta^\vee$, so it is the minimal representative of the coset $uW_J$. By Proposition~\ref{Prop : affine_length_expression0} we have
\begin{align}\ell^a(\pi^{v(\lambda+\beta^\vee)} s_{v(\beta)}w)-\ell^a(\pi^{v(\lambda)} w)&=2\htt((\lambda+\beta^\vee)^{++})-2\htt(\lambda)+\ell_{v^{v(\lambda+\beta^\vee)}}(s_{v(\beta)}w)-\ell_v(w) \label{eq: lengthexpr0} \\
    \ell^a(\pi^{vs_\beta(\lambda+\beta^\vee)} s_{v(\beta)}w)-\ell^a(\pi^{v(\lambda)} w)&=2\htt((\lambda+\beta^\vee)^{++})-2\htt(\lambda)+\ell_{v^{vs_\beta(\lambda+\beta^\vee)}}(s_{v(\beta)}w)-\ell_v(w). \label{eq: lengthexpr01}\end{align}

    We unwrap these formulas with the help of previous results.
   \begin{enumerate} \item In the case $\by=\pi^{v(\lambda+\beta^\vee)}s_{v(\beta)}w$, let $\tilde u \in W_J$ be such that $vu\tilde u=(vu)^{J}=v^{v(\lambda+\beta^\vee)}$. The term $\ell_{v^{v(\lambda+\beta^\vee)}}(s_{v(\beta)}w)-\ell_v(w)$ rewrites as $\ell((u\tilde u)^{-1}s_\beta v^{-1}w)-\ell(vu\tilde u)-\ell(v^{-1}w)+\ell(v)$.
    Since $\by>\bx$ is a covering, by Proposition~\ref{covers_minimal_galleries}, $vs_\beta u\tilde u=s_{v(\beta)}(vu)^{J}$ is on a minimal gallery from $v$ to $w$, so $\ell(v^{-1}w)=\ell((vs_{\beta}u\tilde u)^{-1}w)+\ell(s_\beta u\tilde u)$. Moreover by Lemma~\ref{Lemma : l(su)=l(s)+l(u)}, $\ell(s_\beta u\tilde u)=\ell(s_\beta)+\ell(u\tilde u)$ and, by Lemma~\ref{Lemma : l(vu)=l(v)+l(u)}, $\ell(vu)=\ell(v)+\ell(u)$. Finally, by Equation~\eqref{eq : additive parabolic length}, since $u=u^J=v^{\lambda+\beta^\vee}$ and $vu\tilde u = (vu)^J=v^{v(\lambda+\beta^\vee)}$, we have $\ell(u\tilde u)=\ell(u)+\ell(\tilde u)$ and $\ell(vu)=\ell(vu \tilde u)+\ell(\tilde u)$. Thus
    \begin{align}
        \ell_{v^{v(\lambda+\beta^\vee)}}(s_{v(\beta)}w)-\ell_v(w) &= \ell((u\tilde u)^{-1}s_\beta v^{-1}w)-\ell(v^{-1}w)-\ell(vu\tilde u)+\ell(v) \nonumber \\
        &=-\ell(s_\beta u\tilde u)-\ell(vu)+\ell(\tilde u)+\ell(v) \nonumber\\
        &=-\ell(s_\beta)-\ell(u\tilde u)-\ell(u)+\ell(\tilde u) \nonumber\\
        &=-\ell(s_\beta)-2\ell(u). \label{eq: lengthexpr1}
    \end{align}
\item In the second case, let $\tilde u \in W_J$ be such that $vs_\beta u \tilde u=(vs_\beta u)^{J}=v^{vs_\beta(\lambda+\beta^\vee)}$, then $\ell_{v^{vs_\beta(\lambda+\beta^\vee)}}(s_{v(\beta)}w)-\ell_v(w)$ rewrites as $\ell((u\tilde u)^{-1}v^{-1}w)-\ell(vs_\beta u\tilde u)-\ell(v^{-1}w)+\ell(v)$. By Proposition~\ref{covers_minimal_galleries}, $\ell((u\tilde u)^{-1}v^{-1}w)=\ell(v^{-1}w)-\ell(u\tilde u)$. By Equation~\eqref{eq : additive parabolic length}, $\ell(u\tilde u)=\ell(u)+\ell(\tilde u)$ and $\ell(vs_\beta u\tilde u)=\ell(vs_\beta u)-\ell(\tilde u)$. By Lemma~\ref{Lemma : l(vu)=l(v)+l(u)} and~\ref{Lemma : l(su)=l(s)+l(u)}, $\ell(vs_\beta u)=\ell(v)+\ell(s_\beta u)=\ell(v)+\ell(s_\beta)+\ell(u)$.

Thus in this case: 
\begin{align}\ell_{v^{vs_\beta(\lambda+\beta^\vee)}}(s_{v(\beta)}w)-\ell_v(w) &= \ell((u\tilde u)^{-1}v^{-1}w)-\ell(vs_\beta u\tilde u)-\ell(v^{-1}w)+\ell(v) \nonumber \\
&= \ell(v^{-1}w)-\ell(u\tilde u)-(\ell(vs_\beta u)-\ell(\tilde u))-\ell(v^{-1}w)+\ell(v) \nonumber\\ 
&= -\ell(s_\beta)-2\ell(u). \label{eq: lengthexpr2}\end{align}

\item By Lemma~\eqref{Lemma : height of general coweight} we have \begin{align}
    2\htt((\lambda+\beta^\vee)^{++})&= 2(\htt(\lambda+\beta^\vee)-\sum\limits_{\tau\in \Inv(u^{-1})} \langle \lambda+\beta^\vee,\tau\rangle) \nonumber \\
    &=2(\htt(\lambda)+\htt(\beta^\vee)-\sum\limits_{\tau\in \Inv(u^{-1})} \langle \lambda+\beta^\vee,\tau\rangle). \label{eq: lengthexpr3}
\end{align}

\end{enumerate}
By reinjecting Expressions~\eqref{eq: lengthexpr1},~\eqref{eq: lengthexpr3} in Formula~\eqref{eq: lengthexpr0}, and Expressions~\eqref{eq: lengthexpr2},~\eqref{eq: lengthexpr3} in Formula~\eqref{eq: lengthexpr01} we obtain either way:

\begin{align*}
    \ell^a(\by)-\ell^a(\bx) &=2\htt(\lambda)+2\htt(\beta^\vee)-2\sum\limits_{\tau\in \Inv(u^{-1})} \langle \lambda+\beta^\vee,\tau\rangle-2\htt(\lambda)-\ell(s_\beta)-2\ell(u) \\
    &=\Bigg(2\htt(\beta^\vee)-\ell(s_\beta)\Bigg)-2\Bigg(\ell(u)+\sum\limits_{\tau\in \Inv(u^{-1})} \langle \lambda+\beta^\vee,\tau\rangle\Bigg).
\end{align*}
\end{proof}
Using Lemma~\ref{Lemma : height as sum}, it is easy to see that $2\htt(\beta^\vee)-\ell(s_\beta)$ is always positive and that, on the contrary, $\ell(u)+\sum\limits_{\tau\in \Inv(u^{-1})} \langle \lambda+\beta^\vee,\tau\rangle$ is always non-positive. Therefore, the length difference is equal to $1$ if and only if in the right-hand side of Equation~\eqref{eq: length difference generalcase}, the first term is equal to $1$ and the second term cancels out. This motivates the following definitions.
\begin{Definition}\label{Definition quantum}
  A coweight $\mu\in Y^+$ is \textbf{almost dominant} if and only if
        \begin{equation}
            \forall \tau\in\Phi_+,\; \langle \mu,\tau\rangle \geq -1.
        \end{equation}
 A root $\beta\in \Phi_+$ is \textbf{a quantum root} if and only if
        \begin{equation}
            \ell(s_\beta)=2\htt(\beta^\vee) -1.
        \end{equation}
\end{Definition}
The notion of quantum roots comes from the definition of quantum Bruhat graphs, (see~\cite[\S 4.1]{lenart2012Kirillov}). With Notation~\ref{Notation: properly affine covers}, in Subsection~\ref{subsection: almost dominance} we prove that, if $\by$ covers $\bx$ then $\lambda+\beta^\vee$ is almost dominant and we prove in Subsection~\ref{subsection: quantum roots} that $\beta$ needs to be a quantum root.

\begin{Remark}
    Note that if $\lambda+\beta^\vee$ is dominant, then the second term in the right-hand side of Expression~\eqref{eq: length difference generalcase} immediately cancels out, since in this case $u=1_W$. In the reductive case, $\Phi$ is finite and therefore if $\lambda$ is far enough in the fundamental chamber (meaning that $\langle \lambda,\alpha_i\rangle$ is large for all $i\in I$, we say that $\lambda$ is superregular), then $\lambda+\beta^\vee$ is always dominant. Accordingly, covers of $\pi^{v(\lambda)}w$ for $\lambda$ superregular are easier to classify (see \cite[Proposition 4.4]{lam2010quantum}, \cite[Proposition 4.4]{milicevic2021newton}).
\end{Remark}
\subsection{Almost-dominance in properly affine covers}\label{subsection: almost dominance}
In this section, we prove that the second term of the right-hand side of Expression~\eqref{eq: length difference generalcase} need to be zero when $\by$ covers $\bx$ (with Notation~\ref{Notation: properly affine covers}), through the following proposition:
\begin{Proposition}\label{Proposition : <lambda+beta,tau>}
    Let $\lambda\in Y^{++}$, $v\in W^\lambda$ and $w \in W$. Let $\beta\in \Phi_+$ be a positive root and suppose that $\pi^{v(\lambda+\beta^\vee)}s_{v(\beta)}w$ or $\pi^{vs_\beta(\lambda+\beta^\vee)}s_{v(\beta)}w$ covers $\pi^{v(\lambda)}w$. Then $\lambda+\beta^\vee$ is almost dominant, that is to say:
    \begin{equation}
        \forall \tau \in \Phi_+,\; \langle \lambda+\beta^\vee,\tau\rangle \geq -1.
    \end{equation}
\end{Proposition}
It is deduced from the following two technical lemmas, we give their proofs after the proof of Proposition~\ref{Proposition : <lambda+beta,tau>}.
\begin{Lemma}\label{SubLemma_2: <lambda+beta,tau>}
    Let $\lambda \in Y^{++},\, v\in W^\lambda,\, w\in W,\,\beta \in \Phi_+$. Suppose that there exists a pair $(\tau,n)\in \Phi_+\times \mathbb Z$ such that:
    \begin{enumerate}[(i)]
        \item\label{point i} $n>0$
        \item\label{point ii} $\langle \lambda+n\tau^\vee,\beta\rangle \geq -1$
        \item\label{point iii} $n<-\langle \lambda+\beta^\vee,\tau\rangle$.
    \end{enumerate}
    Then, $\pi^{v(\lambda+\beta^\vee)}s_{v(\beta)}w$ and $\pi^{vs_\beta(\lambda+\beta^\vee)}s_{v(\beta)}w$ do not cover $\pi^{v(\lambda)}w$. 
\end{Lemma}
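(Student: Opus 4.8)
The plan is to show, under hypotheses (i)--(iii), that $\by$ does not cover $\bx=\pi^{v(\lambda)}w$ by exhibiting a strictly increasing chain of length at least two from $\bx$ to $\by$. I carry this out for $\by=\pi^{v(\lambda+\beta^\vee)}s_{v(\beta)}w$; the case $\by=\pi^{vs_\beta(\lambda+\beta^\vee)}s_{v(\beta)}w$ is handled by an entirely analogous chain, the two candidates being reflections in the $v(\beta)$-direction of the common element $\pi^{v(\lambda+\beta^\vee)}w$. Throughout I use Formula \eqref{eq : inequality condition} for the links that translate the coweight, Corollary \ref{affine_to_relative} for the links joining two elements of the same dominance class, and the braid-type identities $s_{v(\beta)}s_{v(\tau)}=s_{v(s_\beta\tau)}s_{v(\beta)}$ and $s_{v(\tau)}s_{v(\beta)}=s_{v(s_\tau\beta)}s_{v(\tau)}$; note that $s_\beta\tau$ and $s_\tau\beta$ are positive roots since, by (iii), $\langle\beta^\vee,\tau\rangle\le -2<0$ (whence also $\langle\tau^\vee,\beta\rangle<0$).

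First I climb along $\tau$. Since $\lambda$ is dominant and $\tau\in\Phi_+$ we have $\langle\lambda,\tau\rangle\ge 0$, so the forbidden interval $\llbracket-\langle\lambda,\tau\rangle,0\rrbracket$ of \eqref{eq : inequality condition} (applied with $(\mu_0,\alpha_0,m)=(\lambda,\tau,n)$) sits in $\Z_{\le 0}$; hypothesis (i) then gives $\bx<\bz_1:=\pi^{v(\lambda+n\tau^\vee)}s_{v(\tau)}w$. I then perform a reflection in the $\beta$-direction at the translated base point $\lambda+n\tau^\vee$: applying \eqref{eq : inequality condition} with $(\mu_0,\alpha_0,m)=(\lambda+n\tau^\vee,\beta,1)$ and outer element $s_{v(\tau)}w$, hypothesis (ii), which says $\langle\lambda+n\tau^\vee,\beta\rangle\ge -1$, forces $1$ to lie outside (or only at the boundary of) the forbidden interval, the boundary case being settled by the sign clause of \eqref{eq : Bruhat order}; this yields $\bz_1<\bz_2$ for a suitable element $\bz_2$ with coweight $v(\lambda+n\tau^\vee+\beta^\vee)$ and vectorial part $s_{v(\beta)}s_{v(\tau)}w$.

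Finally I descend to $\by$. The key point is the elementary computation (via \eqref{eq: leftaction}) that, for $p\in\Z$, one has $s_{v(\tau)[p-c]}\bigl(\pi^{v(\lambda+\beta^\vee+p\tau^\vee)}s_{v(\tau)}s_{v(\beta)}w\bigr)=\by$, where $c=-\langle\lambda+\beta^\vee,\tau\rangle$, and that this reflection is increasing exactly when $0<p<c$. By (i) and (iii) we have $0<n<c$, so $p=n$ works and $\pi^{v(\lambda+\beta^\vee+n\tau^\vee)}s_{v(\tau)}s_{v(\beta)}w<\by$. It remains to interpolate between $\bz_2$ and this element by a short sequence of admissible reflections whose cumulative effect on the coweight is the translation $-n\tau^\vee$, keeping track of signs with the positive integers $c_1=-\langle\beta^\vee,\tau\rangle$ and $c_2=-\langle\tau^\vee,\beta\rangle$ and invoking Corollary \ref{affine_to_relative} whenever two consecutive elements share a dominance class. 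Concatenating everything produces a chain $\bx<\bz_1<\bz_2<\cdots<\by$ of length at least two, so $\by$ does not cover $\bx$.

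The main obstacle is precisely this last interpolation: the coweight of $\by$ differs from that of the natural intermediate $\bz_2$ by $n\tau^\vee$, and $n\tau^\vee$ is not a multiple of the coroot attached to the root relating the vectorial parts of $\bz_2$ and $\by$, so no single affine reflection joins them and the passage must be split into several reflections without overshooting $\by$ in the Bruhat order. Each of the accompanying sign and endpoint conditions (the positivity clauses of \eqref{eq : Bruhat order} and \eqref{eq: leftaction}, the boundary cases of \eqref{eq : inequality condition}) then has to be checked by hand using the signs of $\langle\beta^\vee,\tau\rangle$, $\langle\tau^\vee,\beta\rangle$ and of the pairings $\langle\lambda,\cdot\rangle$; this is where hypotheses (i)--(iii) genuinely conspire.
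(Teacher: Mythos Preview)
Your proposal has a genuine gap, and you correctly identify it yourself in the final paragraph: the ``interpolation'' between your $\bz_2=\pi^{v(\lambda+n\tau^\vee+\beta^\vee)}s_{v(\beta)}s_{v(\tau)}w$ and the element $\pi^{v(\lambda+\beta^\vee+n\tau^\vee)}s_{v(\tau)}s_{v(\beta)}w$ is never carried out. These two elements share a coweight but their vectorial parts differ by $s_{v(\beta)}s_{v(\tau)}s_{v(\beta)}s_{v(\tau)}$, which is not a reflection, so no single affine reflection joins them; nor do you exhibit the promised ``short sequence of admissible reflections''. Saying that ``hypotheses (i)--(iii) genuinely conspire'' is not a proof. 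A second, smaller gap: when $\langle\lambda+n\tau^\vee,\beta\rangle=-1$ the value $m=1$ is a genuine limit case of \eqref{eq : inequality condition}, and whether $\bz_1<\bz_2$ holds then depends on further sign data (e.g.\ the sign of $vs_\tau(\beta)$ or of $w^{-1}v(\beta)$), not merely on ``the sign clause of \eqref{eq : Bruhat order}''.

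The fix, which is what the paper does, is to exploit the freedom in \eqref{eq : inequality condition}: each application gives \emph{two} possible lower elements, namely $\pi^{v(\mu_0)}w$ and $\pi^{vs_{\alpha_0}(\mu_0)}w$. For $\by=\pi^{v(\lambda+\beta^\vee)}s_{v(\beta)}w$ one should take the $s_\tau$-twisted branch already at the first step, i.e.\ $\bz_1'=\pi^{vs_\tau(\lambda+n\tau^\vee)}s_{v(\tau)}w$; then the second step (again with the appropriate twist) produces $\bz_2'=\pi^{vs_\tau(\lambda+\beta^\vee+n\tau^\vee)}s_{v(\tau)}s_{v(\beta)}w$, and now a \emph{single} application of \eqref{eq : inequality condition} with $(\mu_0,\alpha_0,m)=(\lambda+\beta^\vee+n\tau^\vee,\tau,-n)$ yields $\bz_2'<\by$ directly, with condition (iii) providing exactly the needed inequality. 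For $\by=\pi^{vs_\beta(\lambda+\beta^\vee)}s_{v(\beta)}w$ one keeps your $\bz_1$ but takes the $s_\beta$-twisted branch at the second step, landing at $\pi^{vs_\beta(\lambda+\beta^\vee+n\tau^\vee)}s_{v(\beta)}s_{v(\tau)}w$, which again connects to $\by$ in one step. The boundary case $\langle\lambda+n\tau^\vee,\beta\rangle=-1$ then requires an honest case split (on the sign of $vs_\tau(\beta)$, respectively of $w^{-1}vs_\tau(\beta)$), with a different three-step chain in each subcase.
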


\begin{Lemma}\label{SubLemma : <lambda+beta,tau>}
    Let $\lambda \in Y^{++}$ and $\beta\in\Phi_+$ be such that $\lambda+\beta^\vee$ lies in $Y^+$. Let $\tau\in \Phi_+$ be such that $\langle \lambda+\beta^\vee,\tau\rangle\leq-2$ and suppose that $\langle \tau^\vee,\beta\rangle\leq-2$. Then $\langle\lambda+\beta^\vee,s_\tau(\beta)\rangle\geq -1$.
\end{Lemma}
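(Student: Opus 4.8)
The plan is to argue by contradiction. Write $\mu:=\lambda+\beta^\vee\in Y^+$ and assume $\langle\mu,s_\tau(\beta)\rangle\leq -2$; I will exhibit infinitely many positive roots pairing negatively with $\mu$, which is impossible for an element of $Y^+$. The impossibility is the description of the integral Tits cone recalled in Subsection~\ref{subsection: definitions}: writing $\mu=u\mu^{++}$ with $\mu^{++}\in Y^{++}$, any $\gamma\in\Phi_+$ with $\langle\mu,\gamma\rangle<0$ satisfies $\langle\mu^{++},u^{-1}(\gamma)\rangle<0$, hence $u^{-1}(\gamma)\in\Phi_-$, i.e. $\gamma\in\Inv(u^{-1})$; and $\Inv(u^{-1})$ is finite.

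I first fix notation: $p:=-\langle\tau^\vee,\beta\rangle$, $q:=-\langle\mu,\tau\rangle$, $b:=-\langle\beta^\vee,\tau\rangle$, so the two hypotheses read $p\geq 2$ and $q\geq 2$. Since $q=-\langle\lambda,\tau\rangle-\langle\beta^\vee,\tau\rangle=b-\langle\lambda,\tau\rangle$ and $\langle\lambda,\tau\rangle\geq 0$ because $\lambda$ is dominant, one gets $b=q+\langle\lambda,\tau\rangle\geq q\geq 2$; in particular $pb\geq 4$, so the dihedral subgroup $\langle s_\beta,s_\tau\rangle$ is infinite, and $s_\tau(\beta)=\beta+p\tau$, $s_\beta(\tau)=\tau+b\beta$ are positive roots.

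The heart of the argument is the ``wall sequence'' of the rank-two subsystem generated by $\beta$ and $\tau$: set $\sigma_0:=\tau$, $\sigma_1:=s_\tau(\beta)$, and $\sigma_{k+1}:=-s_{\sigma_k}(\sigma_{k-1})=\langle\sigma_k^\vee,\sigma_{k-1}\rangle\,\sigma_k-\sigma_{k-1}$ for $k\geq 1$. Each $\sigma_k$ is a root; a direct computation gives $\langle\sigma_1^\vee,\sigma_0\rangle=b$ and $\langle\sigma_2^\vee,\sigma_1\rangle=p$, and the recursion coefficients $\langle\sigma_k^\vee,\sigma_{k-1}\rangle$ turn out to be $2$-periodic in $k$, hence all lie in $\{p,b\}$ and are $\geq 2$. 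An immediate induction in the basis $(\beta,\tau)$ of $\mathbb{Z}\beta\oplus\mathbb{Z}\tau$ then shows that the coordinates of $\sigma_1,\sigma_2,\dots$ are non-negative and strictly increasing, so the $\sigma_k$ ($k\geq 1$) are pairwise distinct positive roots. Now put $x_k:=-\langle\mu,\sigma_k\rangle$, so that $x_0=q$, $x_1=-\langle\mu,s_\tau(\beta)\rangle\geq 2$ by assumption, and $x_{k+1}=\langle\sigma_k^\vee,\sigma_{k-1}\rangle\,x_k-x_{k-1}$. From $\sigma_2=b\sigma_1-\sigma_0$ and $q\leq b$ one gets $x_2-x_1=(b-1)x_1-q\geq 2(b-1)-b=b-2\geq 0$; and for $k\geq 2$, assuming $2\leq x_{k-1}\leq x_k$, the recursion gives $x_{k+1}\geq 2x_k-x_{k-1}\geq x_k\geq 2$. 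Hence $x_k\geq 2$, i.e. $\langle\mu,\sigma_k\rangle\leq -2<0$, for all $k\geq 1$, contradicting the finiteness statement of the first paragraph. Therefore $\langle\mu,s_\tau(\beta)\rangle\geq -1$.

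The one genuinely delicate step is the combinatorics of the third paragraph — that the $\sigma_k$ stay positive roots with strictly growing coordinates and that the recursion coefficients never drop below $2$ — which is the standard behaviour of positive roots in an infinite rank-two subsystem, and which is exactly where the hypothesis $\langle\tau^\vee,\beta\rangle\leq -2$ is consumed (forcing $p\geq 2$ and the subsystem infinite); the other hypothesis $\langle\lambda+\beta^\vee,\tau\rangle\leq -2$ enters only through $q\geq 2$, which together with $\lambda\in Y^{++}$ yields $b\geq q\geq 2$, and hence both $pb\geq 4$ and the base inequality $x_2\geq x_1$ of the final induction.
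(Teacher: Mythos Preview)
Your argument is correct and rests on the same idea as the paper's proof: both exploit the infinite rank-two subsystem generated by $\beta$ and $\tau$ together with the fact that an element of $Y^+$ can pair negatively with only finitely many positive roots. Your ``wall sequence'' $\sigma_k$ is exactly the sequence of positive roots $(s_\tau s_\beta)^n(\tau)$ and $(s_\tau s_\beta)^n s_\tau(\beta)$ interleaved, which the paper handles instead via the matrix of $s_\tau s_\beta$ in the basis $(\beta,\tau)$; where you argue by contradiction and a monotone recursion on $x_k=-\langle\mu,\sigma_k\rangle$, the paper extracts the inequality $\langle\lambda+\beta^\vee,s_\tau(\beta)\rangle\geq -1$ directly from the eventual non-negativity of $\langle\mu,(s_\tau s_\beta)^n(\tau)\rangle$. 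The only point that deserves one extra line in your write-up is the verification that the recursion coefficients are $2$-periodic: from $c_{k+1}=\langle\sigma_{k-1}^\vee,\sigma_k\rangle$ and $\sigma_k=c_{k-1}\sigma_{k-1}-\sigma_{k-2}$ one gets $c_{k+1}=2c_{k-1}-c_{k-1}=c_{k-1}$, which you state but do not quite spell out.
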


\begin{proof}[Proof of Proposition~\ref{Proposition : <lambda+beta,tau>}]
We prove the contrapositive:
Let $\tau \in \Phi_+$ be a positive root such that $\langle \lambda+\beta^\vee,\tau\rangle \leq -2 $, we will produce non-trivial chain from $\pi^{v(\lambda)}w$ to $\pi^{v(\lambda+\beta^\vee)}s_{v(\beta)}w$ and $\pi^{vs_\beta(\lambda+\beta^\vee)}s_{v(\beta)}w$. In particular since $\lambda$ is dominant, $\langle \beta^\vee,\tau\rangle \leq -2$.

The numbers $\langle \tau^\vee,\beta\rangle$ and $\langle \beta^\vee,\tau\rangle$ have the same sign (\cite[Lemma 1.1.10]{bardy1996systemes}), therefore we have that $\langle \tau^\vee,\beta \rangle\leq-1$.
    
    Suppose first that $\langle \tau^\vee,\beta\rangle \leq -2$, then $(\tau,-(\langle \lambda+\beta^\vee,\tau\rangle+1))$ is a pair which satisfy the conditions of Lemma~\ref{SubLemma_2: <lambda+beta,tau>}:
    
    \begin{enumerate}[(i)]
        \item Since $\langle \lambda+\beta^\vee,\tau\rangle\leq -2$, and $-(\langle \lambda+\beta^\vee,\tau\rangle +1)\geq 1 >0$.
        \item By Lemma~\ref{SubLemma : <lambda+beta,tau>}, $\langle \lambda+\beta^\vee,s_\tau(\beta)\rangle \geq -1$, thus \begin{align*}\langle \lambda-(\langle \lambda+ \beta^\vee,\tau\rangle+1)\tau^\vee,\beta\rangle &= \langle s_\tau(\lambda+\beta^\vee)-\beta^\vee-\tau^\vee,\beta\rangle \\ &=\langle \lambda+\beta^\vee,s_\tau(\beta)\rangle-2-\langle \tau^\vee,\beta\rangle \\ &\geq \langle \lambda+\beta^\vee,s_\tau(\beta)\rangle \geq -1.\end{align*}
        \item Clearly $-(\langle \beta^\vee,\tau\rangle +1)<-\langle \beta^\vee,\tau\rangle$.
    \end{enumerate}
Suppose now that $\langle \tau^\vee,\beta\rangle = -1$, we show that $(\tau,1)$ is a pair satisfying the conditions of Lemma~\ref{SubLemma_2: <lambda+beta,tau>}:

\begin{enumerate}[(i)]
    \item The first point is trivially verified.
    \item Since $\langle \tau^\vee,\beta\rangle = -1$ and $\lambda$ is dominant, $\langle \lambda+\tau^\vee,\beta\rangle \geq -1$.
    \item Since $\langle \lambda+\beta^\vee,\tau\rangle \leq -2$, $1< -\langle \lambda+\beta^\vee,\tau\rangle$.
\end{enumerate}
Hence, either way, if such a $\tau\in \Phi_+$ exists, then by Lemma~\ref{SubLemma_2: <lambda+beta,tau>} 
 $\pi^{v(\lambda+\beta^\vee)}s_{v(\beta)}w$ and $\pi^{vs_\beta(\lambda+\beta^\vee)}s_{v(\beta)}w$ do not cover $\pi^{v(\lambda)}w$.
\end{proof}

\begin{proof}[Proof of Lemma~\ref{SubLemma_2: <lambda+beta,tau>}]
    We use conditions (\ref{point i}),(\ref{point ii}), (\ref{point iii}) in the statement to produce chains from $\pi^{v(\lambda)}w$ to $\pi^{v(\lambda+\beta^\vee)}s_{v(\beta)}w$ and $\pi^{vs_\beta(\lambda+\beta^\vee)}s_{v(\beta)}w$. 
    
 Suppose first that Inequality (\ref{point ii}) is strict, then we show that we have the following chains:
        \begin{align}
             \pi^{v(\lambda)}w&<\pi^{vs_\tau(\lambda+n\tau^\vee)}s_{v(\tau)}w&<\pi^{vs_\tau(\lambda+\beta^\vee+n\tau^\vee)}s_{v(\tau)}s_{v(\beta)}w&<\pi^{v(\lambda+\beta^\vee)}s_{v(\beta)}w \label{eq: chains1} \\
             \pi^{v(\lambda)}w&<\pi^{v(\lambda+n\tau^\vee)}s_{v(\tau)}w &<\pi^{vs_{\beta}(\lambda+\beta^\vee+n\tau)}s_{v(\beta)}s_{v(\tau)}w &<\pi^{vs_{\beta}(\lambda+\beta^\vee)}s_{v(\beta)}w. \label{eq: chains2}
        \end{align}

    \begin{enumerate}[a)]
        \item By Condition (\ref{point i}), since $\lambda$ is dominant and $\tau$ is a positive root, using Formula~\eqref{eq : inequality condition} with $(\mu_0,\alpha_0,v_0,w_0,m)=(\lambda,\tau,v,w,n)$, we have \begin{equation}\label{eqsublemma1}\pi^{v(\lambda)}w<\pi^{v(\lambda+n\tau^\vee)}s_{v(\tau)}w.\end{equation} Using Formula~\eqref{eq : inequality conditionbis} with the same parameters gives \begin{equation}\label{eqsublemma1bis}\pi^{v(\lambda)}w <\pi^{vs_{\tau}(\lambda+n\tau^\vee)}s_{v(\tau)}w.\end{equation}
      
        \item We supposed that Inequality (\ref{point ii}) is strict, so $\langle \lambda+n\tau^\vee,\beta\rangle \geq 0$. Then by Formula~\eqref{eq : inequality condition} applied with $(\mu_0,\alpha_0,v_0,w_0,m)=(\lambda+n\tau^\vee,\beta,vs_\tau,s_{v(\tau)}w,1)$, we get
    \begin{equation}\label{eqsublemma2bis}
    \pi^{vs_\tau(\lambda+n\tau^\vee)}s_{v(\tau)}w<\pi^{vs_\tau(\lambda+\beta^\vee+n\tau^\vee)}s_{v(\tau)}s_{v(\beta)}w.    \end{equation}

    Moreover by Formula~\eqref{eq : inequality conditionbis} applied with $(\mu_0,\alpha_0,v_0,w_0,m)=(\lambda+n\tau^\vee,\beta,v,s_{v(\tau)}w,1)$,
    \begin{equation}\label{eqsublemma2}
        \pi^{v(\lambda+n\tau^\vee)}s_{v(\tau)}w <\pi^{vs_\beta(\lambda+\beta^\vee+n\tau^\vee)}s_{v(\beta)}s_{v(\tau)}w.
    \end{equation}
        \item Since $\langle \tau^\vee,\tau\rangle=2$, Condition (\ref{point iii}) is equivalent to $-n<- \langle \lambda+\beta^\vee+n\tau^\vee,\tau \rangle$, so, using Formula~\eqref{eq : inequality conditionbis} for $(\mu_0,\alpha_0,v_0,w_0,m)=(\lambda+\beta^\vee+n\tau^\vee,\tau,vs_\tau,s_{v(\tau)}s_{v(\beta)}w,-n)$, we get
        \begin{equation}\label{eqsublemma3}
            \pi^{vs_\tau(\lambda+\beta^\vee+n\tau^\vee)}s_{v(\tau)}s_{v(\beta)}w<\pi^{v(\lambda+\beta^\vee)}s_{v(\beta)}w.
        \end{equation}
        Moreover using Formula~\eqref{eq : inequality condition} for $(\mu_0,\alpha_0,v_0,w_0,m)=(\lambda+\beta^\vee+n\tau^\vee,\tau,vs_\beta,s_{v(\beta)}s_{v(\tau)}w,-n)$,
        \begin{equation}\label{eqsublemma3bis}
             \pi^{vs_{\beta}(\lambda+\beta^\vee+n\tau)}s_{v(\beta)}s_{v(\tau)}w <\pi^{vs_{\beta}(\lambda+\beta^\vee)}s_{v(\beta)}w.
        \end{equation}
           \end{enumerate} 
    Thus, if Inequality (\ref{point ii}) is strict, combining~\eqref{eqsublemma1}\eqref{eqsublemma2} and~\eqref{eqsublemma3bis} we obtain the chain~\eqref{eq: chains1}.
        Moreover combining~\eqref{eqsublemma1bis},~\eqref{eqsublemma2bis} and~\eqref{eqsublemma3} we obtain the chain~\eqref{eq: chains2}.
    This proves Lemma \ref{SubLemma_2: <lambda+beta,tau>} in this case.
    
    \medskip
    
    Suppose now that $\langle \lambda+n\tau^\vee,\beta\rangle = -1$. Then note that $\lambda+n\tau^\vee+\beta^\vee=s_\beta(\lambda+n\tau^\vee)$, and Formulas~\eqref{eq : inequality condition},~\eqref{eq : inequality conditionbis} can not be used for the middle inequalities of chains~\eqref{eq: chains1} and~\eqref{eq: chains2} anymore.

    \begin{enumerate}
        \item Case $\pi^{v(\lambda+\beta^\vee)}s_{v(\beta)}w$ if $\langle \lambda+n\tau^\vee,\beta\rangle=-1$.
        
    \begin{enumerate}[a)]
        \item If $vs_\tau(\beta) \in \Phi_+$ then we can apply Formula~\eqref{eq: leftaction} to the element $\pi^{vs_\tau(\lambda+\tau^\vee)}s_{v(\tau)}w$ and the positive affinized root $vs_\tau(\beta)[0]$, and since $\langle \lambda+n\tau^\vee,\beta\rangle = -1<0$, we still have
        \begin{equation*} \pi^{vs_\tau(\lambda+n\tau^\vee)}s_{v(\tau)}w<s_{vs_\tau(\beta)[0]}\pi^{vs_\tau(\lambda+\tau^\vee)}s_{v(\tau)}w=\pi^{vs_\tau(\lambda+\beta ^\vee+n\tau^\vee)}s_{v(\tau)}s_{v(\beta)}w\end{equation*}
        and the chain~\eqref{eq: chains1} still holds.

        \item If $vs_\tau(\beta)\in \Phi_-$, that is to say $s_\tau(\beta) \in \Inv(v)$, then by Proposition~\ref{affine_to_relative0} \begin{equation}\label{eqsublemma4}\pi^{v(\lambda)}w < \pi^{vs_{s_\tau(\beta)}(\lambda)}s_{vs_\tau(\beta)}w\end{equation} and by Formula~\eqref{eq : inequality conditionbis} applied with $(\mu_0,\alpha_0,v_0,w_0,m)=(\lambda,\tau,vs_{s_\tau(\beta)},s_{vs_\tau(\beta)}w, 1)$ we get \begin{equation}\label{eqsublemma4bis}\pi^{vs_{s_\tau(\beta)}(\lambda)}s_{vs_\tau(\beta)}w< \pi^{vs_\tau s_\beta(\lambda+n\tau^\vee)}s_{v(\tau)}s_{v(\beta)}w=\pi^{vs_\tau (\lambda+\beta^\vee+n\tau^\vee)}s_{v(\tau)}s_{v(\beta)}w.\end{equation} For the vectorial elements, we used the fact that $s_{vs_\tau(\beta)}=s_{v(\tau)}s_{v(\beta)}s_{v(\tau)}=vs_\tau s_\beta s_\tau v^{-1}$ and $s_{vs_{s_\tau(\beta)}(\tau)}=vs_\tau s_\beta s_\tau s_\beta s_\tau v^{-1}$, so $s_{vs_{s_\tau(\beta)}(\tau)}s_{vs_\tau(\beta)}=vs_\tau s_\beta v^{-1}=s_{v(\tau)}s_{v(\beta)}$.
         \end{enumerate}  
        Combining Inequalities~\eqref{eqsublemma4} and~\eqref{eqsublemma4bis} we obtain the chain
\begin{equation}\pi^{v(\lambda)}w<\pi^{vs_\tau s_\beta s_\tau (\lambda)}s_{v(\tau)}s_{v(\beta)} s_{v(\tau)}w <\pi^{v(s_\tau (\lambda+\beta^\vee+n\tau^\vee))}s_{v(\tau)}s_{v(\beta)}w<\pi^{v(\lambda+\beta^\vee)}s_{v(\beta)}w \end{equation} which replaces chain~\eqref{eq: chains1}.

    \item Case $\pi^{vs_\beta(\lambda+\beta^\vee)}s_{v(\beta)}w$ if $\langle \lambda+n\tau^\vee,\beta\rangle=-1$.
    \begin{enumerate}[a)]
    \item If $w^{-1}vs_\tau(\beta) \in \Phi_-$, by Formula~\eqref{eq: leftaction} applied to $\pi^{v(\lambda+n\tau^\vee)}s_{v(\tau)}w$ and the affinized root $v(\beta)[\langle \lambda+n\tau^\vee,\beta\rangle]$, since $\langle \lambda+n\tau^\vee,\beta\rangle <0$, \begin{equation*}\pi^{v(\lambda+n\tau^\vee)}s_{v(\tau)}w<\pi^{v(\lambda+n\tau^\vee)}s_{v(\beta)}s_{v(\tau)}w=\pi^{vs_\beta(\lambda+\beta^\vee+n\tau^\vee)}s_{v(\beta)}s_{v(\tau)}w\end{equation*} and the chain~\eqref{eq: chains2} still holds.
    \item If $w^{-1}vs_\tau(\beta) \in \Phi_+$, then using Formula~\eqref{eq: leftaction} with $\pi^{v(\lambda)}w$ and the affinized reflection $vs_\tau(\beta)[\langle \lambda,s_\tau(\beta)\rangle]$ (which is always possible because, if $\langle \lambda,s_\tau(\beta)\rangle=0$ then by minimality of $v$, $vs_\tau(\beta)\in\Phi_+$), we obtain \begin{equation}\label{eqsublemma5}\pi^{v(\lambda)}w<\pi^{v(\lambda)}s_{v(\tau)}s_{v(\beta)}s_{v(\tau)}w.\end{equation} Moreover, by Formula~\eqref{eq : inequality condition} applied with $(\mu_0,\alpha_0,v_0,w_0,m)=(\lambda,\tau,v,s_{vs_\tau(\beta)}w,n)$, we get    \begin{equation}\label{eqsublemma5bis}\pi^{v(\lambda)}s_{v(\tau)}s_{v(\beta)}s_{v(\tau)}w<\pi^{v(\lambda+n\tau^\vee)}s_{v(\beta)}s_{v(\tau)}w.\end{equation} Hence combining Inequalities~\eqref{eqsublemma5} and~\eqref{eqsublemma5bis} we obtain a chain
    \begin{equation}\pi^{v(\lambda)}w <\pi^{v(\lambda)}s_{v(\tau)}s_{v(\beta)}s_{v(\tau)}w <\pi^{vs_\beta(\lambda+\beta^\vee+n\tau^\vee)}s_{v(\beta)}s_{v(\tau)}w<\pi^{vs_\beta(\lambda+\beta^\vee)}s_{v(\beta)}w. \end{equation}
\end{enumerate}

    \end{enumerate}
Therefore, in all cases, if such a pair $(\tau,n)$ exists, then $\pi^{v(\lambda+\beta^\vee)}s_{v(\beta)}w$ and $\pi^{vs_\beta(\lambda+\beta^\vee)}s_{v(\beta)}w$ do not cover $\pi^{v(\lambda)}w$.
\end{proof}

\begin{proof}[Proof of Lemma~\ref{SubLemma : <lambda+beta,tau>}]

The proof relies on the assumption that $\lambda+\beta^\vee$ lies in the Tits cone, which is equivalent to saying that there is only a finite number of positive roots $\alpha$ such that $\langle \lambda+\beta^\vee,\alpha\rangle <0$.

We will show that $\langle \lambda+\beta^\vee, (s_\tau s_\beta)^n(\tau)\rangle \geq0$ for $n$ large enough implies $\langle\lambda+\beta^\vee,s_\tau(\beta)\rangle \geq-1$, which implies the lemma.
To shorten the computation, let us write $a=-\langle \beta^\vee,\tau\rangle$ and $a^\vee=-\langle \tau^\vee,\beta\rangle$. So the assumptions $\langle \lambda+\beta^\vee,\tau\rangle \leq -2$ and $\langle \tau^\vee,\beta\rangle\leq -2$ imply that $a\geq 2+\langle \lambda,\tau\rangle$ and $a^\vee \geq 2$.
In the basis $(\beta,\tau)$ of $\R \beta \oplus \R \tau$, the matrix of $s_\tau s_\beta$ is $M=\begin{pmatrix}
    -1 & a \\ -a^\vee & aa^\vee-1
\end{pmatrix}$. We have $\chi_M = X^2+(2-aa^\vee)X+1$ thus, since $aa^\vee \geq 4$, $M^2=(aa^\vee-2)M-I_2$. Write $M^n=\mu_n M+\nu_n I_2$ for $n\in \mathbb N$, then an easy computation shows that $\nu_n=-\mu_{n-1}$ and $\mu_{n+1}=(aa^\vee-2)\mu_n-\mu_{n-1}$. In particular since $aa^\vee-2 \geq 2$ and $\mu_0 = 0 < \mu_1$, by iteration $(\mu_n)_{n\in\mathbb N}$ is strictly increasing.

Let $x=\langle \lambda,\beta\rangle\geq0$ and $y=\langle \lambda,\tau\rangle \in \llbracket0,a-2\rrbracket$. Then
\begin{align}
    \langle \lambda+\beta^\vee,(s_\tau s_\beta)^n(\tau)\rangle & = \langle \lambda+\beta^\vee,a\mu_n\beta+((aa^\vee-1)\mu_n-\mu_{n-1})\tau\rangle \label{eq: sublemmatitscone1}\\
    & = (x+2)\mu_n a + ((aa^\vee-1)\mu_n -\mu_{n-1})(y-a). \nonumber 
\end{align}
Since $\lambda+\beta^\vee$ lies in the Tits cone, $\langle \lambda+\beta^\vee,(s_\tau s_\beta)^n(\tau)\rangle$ is non-negative for $n$ large enough. Moreover, $\mu_{n-1}<\mu_n$ for all $n\in \mathbb N$ and $a-y>0$. Therefore we deduce from Equation~\eqref{eq: sublemmatitscone1} that, for $n$ large enough,
\begin{equation*}(x+2)\mu_n a \geq (a-y)((aa^\vee-1)\mu_n-\mu_{n-1})>(a-y)\mu_n(aa^\vee-2).\end{equation*} Hence $$(x+2)>(a-y)(a^\vee-\frac{2}{a})=aa^\vee-a^\vee y-2+2\frac{y}{a}.$$
Therefore $\langle \lambda+\beta^\vee,s_\tau(\beta)\rangle = x+2+a^\vee y-aa^\vee>-2+2\frac{y}{a}$ and, since it is an integer, we deduce $\langle \lambda+\beta^\vee,s_\tau(\beta)\rangle \geq -1 \geq 1-a^\vee$, which proves the result.
\end{proof}

\begin{Corollary}\label{Corollary: simplification}
 Let $\lambda \in Y^{++}, v\in W^\lambda,w\in W$. Let $\beta \in \Phi_+$ be a positive root such that $\lambda+\beta^\vee \in Y^+$. Suppose that $\by \in \{ \pi^{v(\lambda+\beta^\vee)}s_{v(\beta)}w, \pi^{vs_\beta(\lambda+\beta^\vee)}s_{v(\beta)}w\}$ covers $\bx = \pi^{v(\lambda)}w$.
     
     Then \begin{align}\label{eq: length difference generalcase2}
         \ell^a(\by)-\ell^a(\bx)&=2\htt(\beta^\vee) -\ell(s_\beta). 
     \end{align}
\end{Corollary}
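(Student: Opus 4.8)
The plan is to read off the conclusion from Proposition~\ref{Prop : length expression}: under the covering hypothesis, formula~\eqref{eq: length difference generalcase} writes $\ell^a(\by)-\ell^a(\bx)$ as
\[
\Bigl(\sum_{\gamma\in\Inv(s_\beta)}\langle\beta^\vee,\gamma\rangle-\ell(s_\beta)\Bigr)
-2\Bigl(\ell(u)+\sum_{\tau\in\Inv(u^{-1})}\langle\lambda+\beta^\vee,\tau\rangle\Bigr),
\]
with $u=v^{\lambda+\beta^\vee}$, and the first block is exactly the right-hand side of~\eqref{eq: length difference generalcase2}. So the whole task reduces to proving that the second block vanishes, i.e.
\[
\ell(u)+\sum_{\tau\in\Inv(u^{-1})}\langle\lambda+\beta^\vee,\tau\rangle=0.
\]

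First I would rewrite this block. Since the Bruhat length counts inversions, $\ell(u)=|\Inv(u^{-1})|$, so the block equals $\sum_{\tau\in\Inv(u^{-1})}\bigl(1+\langle\lambda+\beta^\vee,\tau\rangle\bigr)$, and it suffices to show that $\langle\lambda+\beta^\vee,\tau\rangle=-1$ for every $\tau\in\Inv(u^{-1})$. I would obtain this by squeezing. For the upper bound: applying Corollary~\ref{Lemma : height of general coweight} to $\mu=\lambda+\beta^\vee\in Y^+$ (with $u=v^{\lambda+\beta^\vee}$, so $\mu=u\mu^{++}$) gives $\langle\lambda+\beta^\vee,\tau\rangle\leq 0$ for all $\tau\in\Inv(u^{-1})$; moreover, since $u=v^{\lambda+\beta^\vee}$ is the minimal-length representative of its coset, the argument carried out in the proof of Proposition~\ref{Prop : affine_length_expression0} applies verbatim (with $\lambda+\beta^\vee$ in place of $\lambda$) and yields $\Inv(u^{-1})\cap\Phi_{\lambda+\beta^\vee}=\emptyset$ — otherwise some reflection $s_\tau\in W_{\lambda+\beta^\vee}$ would satisfy $s_\tau u<u$, contradicting minimality. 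Hence $\langle\lambda+\beta^\vee,\tau\rangle\neq 0$, and being a non-positive nonzero integer it is $\leq-1$. For the lower bound: Proposition~\ref{Proposition : <lambda+beta,tau>}, whose hypotheses are exactly those in force here (this is where the covering hypothesis is consumed), gives $\langle\lambda+\beta^\vee,\tau\rangle\geq-1$ for every positive root $\tau$, in particular for $\tau\in\Inv(u^{-1})\subset\Phi_+$.

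Combining the two bounds forces $\langle\lambda+\beta^\vee,\tau\rangle=-1$ for all $\tau\in\Inv(u^{-1})$, so each summand $1+\langle\lambda+\beta^\vee,\tau\rangle$ is zero, the second block vanishes, and~\eqref{eq: length difference generalcase2} follows. There is no genuine obstacle in this step — all the substantial work (ruling out $\langle\lambda+\beta^\vee,\tau\rangle\leq-2$ by explicit chain constructions) has already been done in Proposition~\ref{Proposition : <lambda+beta,tau>}. The only point that deserves a line of care is noticing that the minimality of $v^{\lambda+\beta^\vee}$ is precisely what upgrades the non-positivity coming from the height formula to the strict inequality $\leq-1$, which is what makes the squeeze collapse to the exact value $-1$.
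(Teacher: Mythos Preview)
Your proof is correct and follows essentially the same approach as the paper: both reduce to showing that the second block in~\eqref{eq: length difference generalcase} vanishes by proving $\langle\lambda+\beta^\vee,\tau\rangle=-1$ for every $\tau\in\Inv(u^{-1})$, squeezing between the lower bound $\geq-1$ from Proposition~\ref{Proposition : <lambda+beta,tau>} and the upper bound $\leq-1$ obtained by combining non-positivity (Corollary~\ref{Lemma : height of general coweight}) with the strict inequality forced by the minimality of $u=v^{\lambda+\beta^\vee}$. The paper phrases the minimality step slightly differently (arguing directly that $s_\tau u<u$ forces $s_\tau(\lambda+\beta^\vee)\neq\lambda+\beta^\vee$), but this is the same observation you invoke via the proof of Proposition~\ref{Prop : affine_length_expression0}.
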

 \begin{proof}
   Let $u=v^{\lambda+\beta^\vee}\in W$. Then for any $\tau \in \Inv(u^{-1})$, by Lemma~\ref{Lemma : Minimal inversion set}, $\langle \lambda+\beta^\vee,\tau\rangle$ is negative. We deduce from Proposition~\ref{Proposition : <lambda+beta,tau>} that $\langle \lambda+\beta^\vee,\tau\rangle = -1$ for any such $\tau$. Therefore
   \begin{equation}\label{eq: lengthdifferencegeneralcase2proof}\sum\limits_{\tau \in \Inv(u^{-1})} \langle \lambda+\beta^\vee,\tau\rangle=-|\Inv(u^{-1})|=-\ell(u).\end{equation} We then directly obtain Expression~\eqref{eq: length difference generalcase2} from Expressions~\eqref{eq: length difference generalcase} and~\eqref{eq: lengthdifferencegeneralcase2proof}.
 \end{proof}

\subsection{Properly affine covers and quantum roots}\label{subsection: quantum roots}
We now prove in Proposition~\ref{Proposition : height is length} that, with Notation~\ref{Notation: properly affine covers}, if $\beta$ is not a quantum root, then $\by$ do not cover $\bx$. This is enough, together with Corollary~\ref{Corollary: simplification}, to conclude that $\ell^a(\by)-\ell^a(\bx)=1$.
There is a subtlety if the root $\beta$ lies in a subsystem of $\Phi$ of type $G_2$, we suppose that this is not the case in Lemma~\ref{Lemma : height is length} and Lemma~\ref{SubLemma : lengthexpression}, and we deal with the $G_2$ case in Lemma~\ref{Lemma : height is length G2}. Let us first give another characterization of quantum roots.
\begin{Lemma}\label{Lemma: quantum roots}
    A root $\beta\in \Phi_+$ is a quantum root if and only if $\langle \beta^\vee,\gamma\rangle =1$ for all $\gamma \in \Inv(s_\beta)\setminus\{\beta\}$.
\end{Lemma}
\begin{proof}
    Recall that a quantum root is a root $\beta\in \Phi_+$ such that $2\htt(\beta^\vee)=\ell(s_\beta)+1$. By Corollary~\ref{Lemma : height as sum}, this is equivalent to 
    \begin{equation}\label{equation : quantum height as sum}\sum\limits_{\gamma \in \Inv(s_\beta)}\langle \beta^\vee,\gamma\rangle = \ell(s_\beta)+1.\end{equation} 
    For any $\gamma \in \Inv(s_\beta)$, $\gamma$ is a positive root and $s_\beta(\gamma)=\gamma-\langle \beta^\vee,\gamma\rangle \beta$ is a negative root, therefore $\langle \beta^\vee,\gamma\rangle\geq 1$. Moreover, $\langle\beta^\vee,\beta\rangle =2$ and $|\Inv(s_\beta)|=\ell(s_\beta)$. Therefore Equation~\eqref{equation : quantum height as sum} is satisfied if and only if $\langle \beta^\vee,\gamma\rangle$ is exactly one for all $\gamma\in \Inv(s_\beta)\setminus\{\beta\}$.
\end{proof}

\begin{Lemma}\label{Lemma : height is length}
     Let $\lambda \in Y^{++}$, $v\in W^\lambda$, $w\in W$ and $\beta\in \Phi_+$. Let $\gamma \in \Inv(s_\beta)\setminus \{\beta\}$ be such that $\langle \beta^\vee,\gamma\rangle \geq2$ and suppose that $\beta \notin \Inv(s_\gamma)$. Then $\pi^{v(\lambda+\beta^\vee)}s_{v(\beta)}w$ and $\pi^{v(\lambda+\beta^\vee)}s_{v(\beta)}w$ do not cover $\pi^{vs_\beta(\lambda)}w$. 
\end{Lemma}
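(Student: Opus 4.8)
The plan is to show, under the stated hypotheses, that neither $\by_1=\pi^{v(\lambda+\beta^\vee)}s_{v(\beta)}w$ nor $\by_2=\pi^{vs_\beta(\lambda+\beta^\vee)}s_{v(\beta)}w$ covers $\bx=\pi^{v(\lambda)}w$, by exhibiting in each case an explicit non-trivial chain from $\bx$ to $\by_i$ for the affine Bruhat order. Recall that any element of $Y\rtimes W^v$ lying below an element of $W^+$ is automatically in $W^+$ (first Lemma and its Corollary in Subsection~\ref{subsection: prelimary results}), so the intermediate terms of such chains never have to be checked for membership in $W^+$.

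The first step is to unpack the rank-$2$ combinatorics of the pair $(\beta,\gamma)$. Since $\gamma\in\Inv(s_\beta)\setminus\{\beta\}$ one has $c':=\langle\beta^\vee,\gamma\rangle\geq 1$, hence $\geq 2$ by hypothesis, and by \cite[Lemma 1.1.10]{bardy1996systemes} also $c:=\langle\gamma^\vee,\beta\rangle\geq 1$. The hypothesis $\beta\notin\Inv(s_\gamma)$ means $\delta:=s_\gamma(\beta)=\beta-c\gamma\in\Phi_+$; since $s_\gamma(\delta)=\beta$ and the coroot map is $W^v$-equivariant, $\delta^\vee=s_\gamma(\beta^\vee)=\beta^\vee-c'\gamma^\vee$, that is $\beta^\vee=\delta^\vee+c'\gamma^\vee$, and from $\delta=s_\gamma(\beta)$ one also gets $s_\beta=s_\gamma s_\delta s_\gamma$, hence $s_{v(\beta)}=s_{v(\gamma)}s_{v(\delta)}s_{v(\gamma)}$.

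Using this decomposition I would build the chain to $\by_1$ in three reflection steps, with successive vectorial parts $w$, $s_{v(\gamma)}w$, $s_{v(\delta)}s_{v(\gamma)}w$, $s_{v(\gamma)}s_{v(\delta)}s_{v(\gamma)}w=s_{v(\beta)}w$, and with coweights assembled out of $\lambda$, $\gamma^\vee$ and $\delta^\vee$ so that, thanks to $\beta^\vee=\delta^\vee+c'\gamma^\vee$, the last coweight is exactly $\lambda+\beta^\vee$. Each inequality is verified using Formula~\eqref{eq : inequality condition}, or directly \eqref{eq: leftaction}; the hypothesis $\langle\beta^\vee,\gamma\rangle\geq 2$ is what is spent in order to make the positivity condition of the last step hold. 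For $\by_2=\pi^{vs_\beta(\lambda+\beta^\vee)}s_{v(\beta)}w$ I would run the parallel chain, invoking at the appropriate steps the twisted versions ("$>\pi^{vs_{\alpha_0}(\mu_0)}w$") of the inequalities in the Remark ending Subsection~\ref{subsection: 3.1} in place of the plain coweight translates.

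The main obstacle — exactly as in the proof of Lemma~\ref{SubLemma_2: <lambda+beta,tau>} — is the bookkeeping of these positivity conditions, and especially the boundary cases in which one of the scalar products governing a step vanishes: Formula~\eqref{eq: leftaction} is then no longer directly applicable and one must split according to the sign of the relevant root (such as $v(\delta)$, $vs_\delta(\gamma)$, or $w^{-1}v(\cdot)$), sometimes rerouting the chain through an element of the same coweight. Verifying that both chains — the one for $\by_1$ and the one for $\by_2$ — survive this case analysis is the delicate part of the argument; the rest is the linear algebra of the second paragraph.
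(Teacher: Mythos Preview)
Your approach is essentially the paper's: set $\alpha:=s_\gamma(\beta)\in\Phi_+$ (your $\delta$), use $s_\beta=s_\gamma s_\alpha s_\gamma$ and $\beta^\vee=\alpha^\vee+\langle\beta^\vee,\gamma\rangle\gamma^\vee$, and build the three-step chains
\[
\pi^{v(\lambda)}w<\pi^{v(\lambda+\gamma^\vee)}s_{v(\gamma)}w<\pi^{v(\lambda+\gamma^\vee+\alpha^\vee)}s_{v(\alpha)}s_{v(\gamma)}w<\pi^{v(\lambda+\beta^\vee)}s_{v(\beta)}w
\]
and its twisted analogue for $\by_2$, with the hypothesis $\langle\beta^\vee,\gamma\rangle\geq2$ spent on the last step.

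There is, however, a genuine missing ingredient in your sketch. The middle step (going from $\lambda+\gamma^\vee$ to $\lambda+\gamma^\vee+\alpha^\vee$ via $s_{v(\alpha)}$ with $m=1$) requires $\langle\lambda+\gamma^\vee,\alpha\rangle\geq -1$: if this quantity is $\leq -2$ then $1$ lies strictly inside the forbidden interval of Formula~\eqref{eq : inequality condition} and the inequality \emph{fails outright}, not merely degenerates to a boundary case. Now $\langle\gamma^\vee,\alpha\rangle=-\langle\gamma^\vee,\beta\rangle$, so when $\langle\gamma^\vee,\beta\rangle=1$ dominance of $\lambda$ gives the bound for free, but when $\langle\gamma^\vee,\beta\rangle\geq2$ there is no a priori control on $\langle\lambda+\gamma^\vee,\alpha\rangle$. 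The paper handles this by first invoking Proposition~\ref{Proposition : <lambda+beta,tau>}: one may assume $\langle\lambda+\beta^\vee,\tau\rangle\geq-1$ for all $\tau\in\Phi_+$ (otherwise there is nothing to prove), and a short computation then yields $\langle\lambda+\gamma^\vee,\alpha\rangle\geq\langle\lambda+\beta^\vee,\alpha\rangle\geq-1$. You should make this reduction explicit; without it your middle step is unjustified in the case $\langle\gamma^\vee,\beta\rangle\geq2$.

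A minor point: the boundary case for the middle step is $\langle\lambda+\gamma^\vee,\alpha\rangle=-1$ (so that $\lambda+\gamma^\vee+\alpha^\vee=s_\alpha(\lambda+\gamma^\vee)$), not a vanishing scalar product. The paper's case split there is on the sign of $v(\alpha)$ (for $\by_1$) and of $w^{-1}v(\alpha)$ (for $\by_2$), exactly as you anticipate.
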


\begin{proof}
By the contrapositive of Proposition~\ref{Proposition : <lambda+beta,tau>}, we can suppose that $\langle \lambda+\beta^\vee,\tau\rangle \geq -1$ for any $\tau \in \Phi_+$.
    Let $\gamma$ be as in the statement and write $\alpha=s_\gamma(\beta)\in \Phi_+$. 
    We will construct non-trivial chains in the same fashion as in the proof of Lemma~\ref{SubLemma_2: <lambda+beta,tau>}. Beforehand, we show by computation that $\langle \lambda+\gamma^\vee,\alpha\rangle \geq -1$. If $\langle \gamma^\vee,\beta\rangle =1=-\langle \gamma^\vee,\alpha\rangle$ it is clear since $\lambda$ is dominant, else if $\langle \gamma^\vee,\beta\rangle \geq2$,
    \begin{align*}\langle \lambda+\gamma^\vee,\alpha\rangle &= 
        \langle \lambda + \beta^\vee-\alpha^\vee+(1-\langle \beta^\vee,\gamma \rangle)\gamma^\vee,\alpha\rangle \\
        &= \langle \lambda+\beta^\vee,\alpha\rangle+(1-\langle \beta^\vee,\gamma\rangle)\langle \gamma^\vee,\alpha \rangle -2 \\
        &=  \langle \lambda+\beta^\vee,\alpha\rangle+(\langle \beta^\vee,\gamma\rangle-1)\langle \gamma^\vee,\beta \rangle -2.\end{align*}
    Since $\langle \beta^\vee,\gamma\rangle \geq 2$ and $\langle \gamma^\vee,\beta\rangle \geq 2$, $(\langle \beta^\vee,\gamma\rangle-1)\langle \gamma^\vee,\beta\rangle \geq2$, and by assumption $\langle \lambda+\beta^\vee,\alpha\rangle \geq -1$. Thus, $\langle \lambda+\gamma^\vee,\alpha\rangle \geq -1$ either way.

We construct chains which are slight modifications of the ones constructed in the proof of Lemma~\ref{SubLemma_2: <lambda+beta,tau>}.
    \begin{enumerate}
        \item Suppose first that $\langle \lambda+\gamma^\vee,\alpha\rangle \geq 0$. Then we show that we have the following chains:
      \begin{align}\pi^{v(\lambda)} w &< &\pi^{v(\lambda+\gamma^\vee)}s_{v(\gamma)} w &<&\pi^{v(\lambda+\gamma^\vee+\alpha^\vee)} s_{v(\alpha)} s_{v(\gamma)} w &<& \pi^{v(\lambda+\beta^\vee)} s_{v(\beta)} w \label{eq : chains3}\\
      \pi^{v(\lambda)} w &< &\pi^{vs_\gamma(\lambda+\gamma^\vee)}s_{v(\gamma)} w &<&\pi^{vs_\gamma s_\alpha(\lambda+\gamma^\vee+\alpha^\vee)} s_{v(\gamma)} s_{v(\alpha)} w &<& \pi^{vs_\beta(\lambda+\beta^\vee)} s_{v(\beta)} w. \label{eq : chains4}\end{align}

      Indeed:
      \begin{enumerate}
        \item The coweight $\lambda$ is dominant and $\gamma \in \Phi_+$, so $\langle \lambda,\gamma\rangle \geq 0$ and Formula~\eqref{eq : inequality condition} (resp. ~\eqref{eq : inequality conditionbis}) applied with $(\mu_0,\alpha_0,v_0,w_0,m)=(\lambda,\gamma,v,w,1)$ proves the leftmost inequality in the chain~\eqref{eq : chains3} (resp.~\eqref{eq : chains4}). 
        \item Since $\langle \lambda+\gamma^\vee,\alpha\rangle \geq 0$, Formula~\eqref{eq : inequality condition} (resp. ~\eqref{eq : inequality conditionbis}) applied with $(\mu_0,\alpha_0,v_0,w_0,m)=(\lambda+\gamma^\vee,\alpha,v,s_{v(\gamma)}w,1)$ (resp. $(\mu_0,\alpha_0,v_0,w_0,m)=(\lambda+\gamma^\vee,\alpha,vs_\gamma,s_{v(\gamma)}w,1)$) proves the second inequality in the chain~\eqref{eq : chains3} (resp.~\eqref{eq : chains4}).
                
        \item Note that $\lambda+\beta^\vee = (\lambda+\gamma^\vee+\alpha^\vee)+(\langle\beta^\vee,\gamma\rangle-1)\gamma^\vee$. Moreover $0<\langle \beta^\vee,\gamma\rangle -1$ and $-\langle \lambda+\gamma^\vee+\alpha^\vee,\gamma\rangle=\langle \beta^\vee,\gamma\rangle-\langle \lambda,\gamma\rangle -2<\langle \beta^\vee,\gamma\rangle -1$. Therefore by applying Formula~\eqref{eq : inequality condition} (resp. ~\eqref{eq : inequality conditionbis}) to $(\mu_0,\alpha_0,v_0,w_0,m)=(\lambda+\gamma^\vee+\alpha^\vee,\gamma,v,s_{v(\alpha)}s_{v(\gamma)}w,\langle \beta^\vee,\gamma\rangle -1)$ (resp. $(\mu_0,\alpha_0,v_0,w_0,m)=(\lambda+\gamma^\vee+\alpha^\vee,\gamma,vs_\gamma s_\alpha,s_{v(\gamma)}s_{v(\alpha)}w,\langle \beta^\vee,\gamma\rangle -1)$) we obtain the rightmost inequality in the chain~\eqref{eq : chains3} (resp.~\eqref{eq : chains4}). 
    \end{enumerate}
    \item We now suppose that $\langle \lambda+\gamma^\vee,\alpha\rangle =-1$. Then $\lambda+\gamma^\vee+\alpha^\vee=s_\alpha(\lambda+\gamma^\vee)$ and the above chains do not always hold. We focus here on the case of $\pi^{v(\lambda+\beta^\vee)}s_{v(\beta)} w$.
    \begin{enumerate}
        \item If $v(\alpha)\in \Phi_+$, since $\langle \lambda+\gamma^\vee,\alpha\rangle <0$, the inequality $\pi^{vs_\alpha(\lambda+\gamma^\vee)}s_{v(\alpha)}s_{v(\gamma)}w>\pi^{v(\lambda+\gamma^\vee)}s_{v(\gamma)}w$ still holds, by Formula~\eqref{eq: leftaction} applied with $s_{v(\alpha)[0]}$. The rest of the chain~\eqref{eq : chains3} still holds and the whole chain remains correct.        
        \item If $v(\alpha)\in \Phi_-$, then $vs_\alpha <v$, and we have a chain
        \begin{equation}\pi^{v(\lambda)}w<\pi^{vs_\alpha(\lambda)}s_{v(\alpha)}w<\pi^{vs_\alpha(\lambda+\gamma^\vee)}s_{v(\alpha)}s_{v(\tau)}w=\pi^{v(\lambda+\gamma^\vee+\alpha^\vee)}s_{v(\alpha)}s_{v(\tau)}w<\pi^{v(\lambda+\beta^\vee)}s_{v(\beta)}w.\end{equation}
        The reflection used for the first inequality is $s_{-v(\alpha)[0]}$, and it holds by Formula~\eqref{eq: leftaction} because $\langle v(\lambda),-v(\alpha)\rangle = -\langle \lambda,\alpha\rangle<0$. Note that this is non-zero because $v$ is the minimal representative of $vW_\lambda$ and thus $vs_\alpha<v$ implies $s_\alpha \notin W_\lambda$ so $\langle \lambda,\alpha\rangle \neq 0$. For the second and the third inequalities we use Formula~\eqref{eq : inequality condition} with $(\mu_0,\alpha_0,v_0,w_0,m)$ equal to $(\lambda,\gamma,vs_\alpha,s_{v(\alpha)}w,1)$ and $(\lambda+\alpha^\vee+\gamma^\vee,\gamma,v,s_{v(\alpha)}s_{v(\gamma)}w,\langle \beta^\vee,\gamma\rangle-1)$ respectively.
    \end{enumerate}
    \item We suppose that $\langle \lambda+\gamma^\vee,\alpha\rangle = -1 $ and we deal with the case of $\pi^{vs_{v(\beta)}(\lambda+\beta^\vee)}s_{v(\beta)}w$. Then \begin{equation}\pi^{vs_\gamma s_\alpha(\lambda+\gamma^\vee+\alpha^\vee)}s_{v(\gamma)}s_{v(\alpha)}w=\pi^{vs_\gamma(\lambda+\gamma^\vee)}s_{vs_\gamma(\alpha)}s_{v(\gamma)}w = s_{vs_\gamma(\alpha)[\langle \lambda +\gamma^\vee,\alpha\rangle]}\pi^{vs_\gamma(\lambda+\gamma^\vee)}s_{v(\gamma)}w.\end{equation} 
    Moreover $(s_{v(\gamma)}w)^{-1}(vs_\gamma(\alpha))=w^{-1}v(\alpha)$. Thus, since $\langle \lambda+\gamma^\vee,\alpha\rangle<0$,
    \begin{enumerate}
        \item If $w^{-1}v(\alpha) \in \Phi_-$, by Formula~\eqref{eq: leftaction}, $\pi^{vs_\gamma(\lambda+\gamma^\vee)}s_{v(\gamma)}w<\pi^{vs_\gamma s_\alpha(\lambda+\gamma^\vee+\alpha^\vee)}s_{v(\gamma)}s_{v(\alpha)} w$ and the chain~\eqref{eq : chains4} still holds.
        \item If $w^{-1}v(\alpha) \in \Phi_+$, then, since $\langle \lambda,\alpha\rangle =\langle \gamma^\vee,\beta\rangle -1> 0$, by Formula~\eqref{eq: leftaction}, $\pi^{v(\lambda)}w<s_{v(\alpha)[\langle\lambda,\alpha\rangle]}\pi^{v(\lambda)}w=\pi^{v(\lambda)}s_{v(\alpha)}w$. Then by Formula~\eqref{eq : inequality conditionbis} applied with $(\mu_0,\alpha_0,v_0,w_0,m)=(\lambda,\gamma,v,s_{v(\alpha)}w,1)$ we have $$\pi^{v(\lambda)}s_{v(\gamma)}s_{v(\beta)}s_{v(\gamma)}w<\pi^{vs_\gamma(\lambda+\gamma^\vee)}s_{v(\beta)}s_{v(\gamma)}w=\pi^{vs_\gamma s_\alpha (\lambda+\gamma^\vee+\alpha^\vee)}s_{v(\gamma)}s_{v(\alpha)} w$$ and we have a chain
        \begin{equation}\pi^{v(\lambda)}w < \pi^{v(\lambda)}s_{v(\alpha)}w < \pi^{vs_\gamma s_\alpha(\lambda+\gamma^\vee+\alpha^\vee)} s_{v(\gamma)}s_{v(\alpha)}w<\pi^{vs_\beta(\lambda+\beta^\vee)}s_{v(\beta)}w.\end{equation}
    \end{enumerate}
    \end{enumerate}

\end{proof}

\begin{Lemma}\label{SubLemma : lengthexpression}
Let $\beta \in \Phi_+$ be a positive root and suppose that there exists $\gamma \in \Inv(s_\beta)\setminus\{\beta\}$ such that $\langle \beta^\vee,\gamma\rangle\geq2$ and $\langle \beta^\vee,\gamma\rangle\langle \gamma^\vee,\beta\rangle \neq 3$. Then $\gamma$ can be chosen such that $\beta \notin \Inv(s_\gamma)$. 
    
\end{Lemma}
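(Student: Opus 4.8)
The plan is to argue by contradiction and exploit that $\Inv(s_\beta)$ is finite, since $|\Inv(s_\beta)|=\ell(s_\beta)$. So suppose the conclusion fails: \emph{every} $\gamma\in\Inv(s_\beta)\setminus\{\beta\}$ with $\langle\beta^\vee,\gamma\rangle\geq 2$ satisfies $\beta\in\Inv(s_\gamma)$. From the $\gamma$ given by the hypothesis I will manufacture infinitely many pairwise distinct such roots, all lying inside $\Inv(s_\beta)$, which is the desired contradiction.

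First I would record two elementary facts. For $\gamma\in\Inv(s_\beta)$ set $\tilde\gamma:=-s_\beta(\gamma)=\langle\beta^\vee,\gamma\rangle\beta-\gamma$; since $s_\beta(\tilde\gamma)=-\gamma\in\Phi_-$ we have $\tilde\gamma\in\Inv(s_\beta)$, and from $\tilde\gamma^\vee=-s_\beta(\gamma^\vee)=\langle\gamma^\vee,\beta\rangle\beta^\vee-\gamma^\vee$ one gets $\langle\beta^\vee,\tilde\gamma\rangle=\langle\beta^\vee,\gamma\rangle$ and $\langle\tilde\gamma^\vee,\beta\rangle=\langle\gamma^\vee,\beta\rangle$; moreover $\tilde\gamma\neq\beta$ whenever $\gamma\neq\beta$, as the only real roots proportional to $\beta$ are $\pm\beta$. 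Secondly, by definition $\beta\in\Inv(s_\gamma)$ means $-s_\gamma(\beta)=\langle\gamma^\vee,\beta\rangle\gamma-\beta\in\Phi_+$, and since $\beta$ is a positive root this forces $\langle\gamma^\vee,\beta\rangle\geq 1$ (cf.\ \cite[Lemma 1.1.10]{bardy1996systemes}). Now fix $\gamma_0$ as in the hypothesis and write $c=\langle\beta^\vee,\gamma_0\rangle\geq 2$ and $c'=\langle\gamma_0^\vee,\beta\rangle$; the for-contradiction assumption applied to $\gamma_0$ gives $\beta\in\Inv(s_{\gamma_0})$, hence $c'\geq 1$, so $cc'\geq 2$, and $cc'\neq 3$ by hypothesis, leaving the two cases $cc'=2$ and $cc'\geq 4$.

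If $cc'=2$ (so $c=2$, $c'=1$), then $\beta\in\Inv(s_{\gamma_0})$ gives $\gamma_0-\beta\in\Phi_+$; but $\tilde\gamma_0=2\beta-\gamma_0$ also lies in $\Inv(s_\beta)\setminus\{\beta\}$ with $\langle\beta^\vee,\tilde\gamma_0\rangle=2$, so the assumption gives $\beta\in\Inv(s_{\tilde\gamma_0})$, hence $\langle\tilde\gamma_0^\vee,\beta\rangle\tilde\gamma_0-\beta=\beta-\gamma_0\in\Phi_+$, contradicting $\gamma_0-\beta\in\Phi_+$. For $cc'\geq 4$ I set $\gamma^{(0)}=\gamma_0$ and $\gamma^{(k+1)}=\langle(\gamma^{(k)})^\vee,\beta\rangle\,\gamma^{(k)}-\beta$, and write $c_k=\langle\beta^\vee,\gamma^{(k)}\rangle$, $c'_k=\langle(\gamma^{(k)})^\vee,\beta\rangle$. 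I would prove by induction that $\gamma^{(k)}\in\Inv(s_\beta)\setminus\{\beta\}$ and $c_k\geq 2$: applying the assumption to $\gamma^{(k)}$ yields $\beta\in\Inv(s_{\gamma^{(k)}})$, hence $\gamma^{(k+1)}=-s_{\gamma^{(k)}}(\beta)\in\Phi_+$; applying it to the partner $\tilde\gamma^{(k)}$ (which has $\langle\beta^\vee,\tilde\gamma^{(k)}\rangle=c_k\geq 2$) yields $\beta\in\Inv(s_{\tilde\gamma^{(k)}})$, and the identity $s_\beta(\gamma^{(k+1)})=s_{\tilde\gamma^{(k)}}(\beta)$ (a direct computation) then shows $s_\beta(\gamma^{(k+1)})\in\Phi_-$, i.e.\ $\gamma^{(k+1)}\in\Inv(s_\beta)$. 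Using $(\gamma^{(k+1)})^\vee=c_k(\gamma^{(k)})^\vee-\beta^\vee$ one computes $c_{k+1}=c'_{k+1}=c_kc'_k-2$, so $c_k=c'_k$ and $c_{k+1}=c_k^2-2\geq 2$ for $k\geq 1$; in particular $c'_k\geq 2$ for $k\geq 1$. Finally, writing $\gamma^{(k)}=a_k\gamma_0-b_k\beta$ in the plane $\R\gamma_0\oplus\R\beta$ (valid since $\gamma_0,\beta$ are linearly independent), we get $a_k=c'_0\cdots c'_{k-1}\geq 1$, whence $\gamma^{(k)}\neq\beta$, and $a_{k+1}=c'_ka_k$ with $c'_k\geq 2$ for $k\geq 1$ makes $(a_k)_{k\geq 1}$ strictly increasing; thus the $\gamma^{(k)}$, $k\geq 1$, are pairwise distinct elements of the finite set $\Inv(s_\beta)$ — contradiction.

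The main obstacle I expect is organising the induction of the last paragraph cleanly: at each step one must certify that the root to which the assumption is applied really satisfies "lies in $\Inv(s_\beta)\setminus\{\beta\}$ with $\langle\beta^\vee,\cdot\rangle\geq 2$", and one must check the two short identities $s_\beta(\gamma^{(k+1)})=s_{\tilde\gamma^{(k)}}(\beta)$ and $(\gamma^{(k+1)})^\vee=c_k(\gamma^{(k)})^\vee-\beta^\vee$ (equivalently $c_{k+1}=c_kc'_k-2$). It is also worth isolating why the hypothesis $\langle\beta^\vee,\gamma\rangle\langle\gamma^\vee,\beta\rangle\neq 3$ is needed: it excludes exactly the value $cc'=3$, for which $\gamma^{(1)}$ would have $\langle\beta^\vee,\gamma^{(1)}\rangle=cc'-2=1$ and the iteration above could not even be started.
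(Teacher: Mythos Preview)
Your proof is correct. The case $cc'=2$ is essentially the paper's argument rephrased as a contradiction: the paper directly checks that $\tilde\gamma_0=-s_\beta(\gamma_0)=2\beta-\gamma_0$ satisfies $s_{\tilde\gamma_0}(\beta)=\gamma_0-\beta=-s_{\gamma_0}(\beta)>0$, hence $\beta\notin\Inv(s_{\tilde\gamma_0})$, which is exactly what your contradiction establishes.

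For $cc'\geq 4$ the two arguments are genuinely different. The paper fixes a minimal-length expression $\beta=s_{\alpha_1}\cdots s_{\alpha_n}(\beta_0)$, so that the elements $\gamma_j=s_{\alpha_1}\cdots s_{\alpha_{j-1}}(\alpha_j)$ enumerate half of $\Inv(s_\beta)$, takes the \emph{smallest} index $k$ for which $\langle\beta^\vee,\gamma_k\rangle\geq 2$ and $\langle\beta^\vee,\gamma_k\rangle\langle\gamma_k^\vee,\beta\rangle\geq 4$, and shows that if $\beta\in\Inv(s_{\gamma_k})$ then $\beta=-s_{\gamma_k}(\gamma_{k-p})$ for some $p\geq 1$, whence $\gamma_{k-p}$ already satisfies the same inequalities, contradicting minimality of $k$. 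Your approach instead iterates the map $\gamma\mapsto -s_\gamma(\beta)$, using the for-contradiction hypothesis on both $\gamma^{(k)}$ and its partner $\tilde\gamma^{(k)}=-s_\beta(\gamma^{(k)})$ to keep the sequence inside $\Inv(s_\beta)$, and the recursion $c_{k+1}=c'_{k+1}=c_kc'_k-2\geq 2$ to keep the pairing condition alive; the strictly increasing $\gamma_0$-coefficients $a_k$ then force infinitely many distinct roots in the finite set $\Inv(s_\beta)$. Your route avoids reduced expressions entirely and is more self-contained, at the cost of having to verify the two identities $s_\beta(\gamma^{(k+1)})=s_{\tilde\gamma^{(k)}}(\beta)$ and $c_{k+1}=c'_{k+1}=c_kc'_k-2$; the paper's route is shorter once one accepts the standard description of $\Inv(s_\beta)$ via a reduced word. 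Both arguments make visible why $cc'=3$ is excluded: in the paper's computation $\langle\beta^\vee,\gamma_{k-p}\rangle=c_kc'_k-2$ would drop to $1$, and in yours $c_1=cc'-2$ would drop to $1$, so neither descent can continue.
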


\begin{proof}
Note that, by \cite[Lemma 1.1.10]{bardy1996systemes}, for any $\beta,\gamma\in \Phi$, $\langle \beta^\vee,\gamma\rangle$ and $\langle \gamma^\vee,\beta\rangle$ have the same sign, so if $\langle \beta^\vee,\gamma\rangle \geq 2$ and $\langle \beta^\vee,\gamma\rangle \langle \gamma^\vee,\beta\rangle \neq 3$, either $\langle \beta^\vee,\gamma\rangle \langle \gamma^\vee\rangle \geq 4$, either $\langle \beta^\vee,\gamma\rangle =2$ and $\langle \gamma^\vee,\beta\rangle =1$. 
We treat separately these cases:
\begin{enumerate}
    \item Let us first suppose that there exists $\gamma \in \Inv(s_\beta)$ such that $\langle \beta^\vee,\gamma\rangle = 2$ and $\langle \gamma^\vee,\beta\rangle =1$. Suppose that $\beta \in \Inv(s_\gamma)$, so $s_\gamma(\beta)=\beta-\gamma <0$, and $s_\beta(\gamma)=\gamma-2\beta <0$. Then we show that $\beta \notin \Inv(s_{\tilde \gamma})$ for $\tilde \gamma = -s_\beta(\gamma)$:
    \begin{align*}s_{\tilde \gamma}(\beta) =s_\beta s_\gamma s_\beta(\beta) = -s_\beta(\beta-\gamma) =\gamma-\beta=-s_\gamma(\beta)>0.\end{align*}
Moreover $s_{\beta}(\tilde \gamma)=-\gamma < 0$ and $\langle \beta^\vee,\tilde \gamma\rangle = \langle \beta^\vee,\gamma\rangle =2$ therefore, $\gamma$ can be chosen such that $\beta \notin \Inv(s_\gamma)$.
    
   \item Let us now suppose that there exists $\gamma \in \Inv(s_\beta)$ such that $\langle \beta^\vee,\gamma\rangle \geq 2$ and  $\langle \beta^\vee,\gamma\rangle \langle \gamma^\vee,\beta\rangle \geq 4$. Write $\beta=v_\beta(\beta_0)=s_{\alpha_1}\ldots s_{\alpha_n}(\beta_0)$ where the $\alpha_i$ and $\beta_0$ are simple roots, and suppose that $n$ is of minimal length amongst possible expressions of $\beta$. Therefore $s_{\alpha_1}\ldots s_{\alpha_n}s_{\beta_0}s_{\alpha_n}\ldots s_{\alpha_1}$ is a reduced expression of $s_\beta$ and
    \begin{equation}\Inv(s_{\beta})= \{s_{\alpha_1}\ldots s_{\alpha_{p-1}}(\alpha_p)\mid p\leq {n}\}\sqcup \{\beta\}\sqcup \{s_{\alpha_1}\ldots s_{\alpha_n}s_{\beta_0}s_{\alpha_{n}}\ldots s_{\alpha_{n+1-p}}(\alpha_{n-p})\mid p\leq n\}.\end{equation}
   Let $k$ be the smallest such that $\gamma_k=s_{\alpha_1}\ldots s_{\alpha_{k-1}}(\alpha_k)$ verifies $\langle \beta^\vee,\gamma_k\rangle \geq 2$ and $\langle \beta^\vee,\gamma_k\rangle \langle \gamma^\vee_k,\beta\rangle \geq 4$. 

    The expression $s_{\alpha_1}\ldots s_{\alpha_{k-1}}s_{\alpha_k}s_{\alpha_{k-1}}\ldots s_{\alpha_1}$ is an expression of $s_{\gamma_k}$, thus
    \begin{equation}\Inv(s_{\gamma_k})\subset \{s_{\alpha_1}\ldots s_{\alpha_{p-1}}(\alpha_p)\mid p\leq {k-1}\}\sqcup \{\gamma_k\}\sqcup \{s_{\alpha_1}\ldots s_{\alpha_k}s_{\alpha_{k-1}}\ldots s_{\alpha_{k+1-p}}(\alpha_{k-p})\mid p\leq k-1\}.\end{equation}
    Suppose by contradiction that $\beta \in \Inv(s_{\gamma_k})$. Since $v_\beta$ is of minimal length, $\beta$ is not in the first set, hence there is $p\in\llbracket 1,k-1\rrbracket$ such that $\beta=s_{\alpha_1}\ldots s_{\alpha_k}s_{\alpha_{k-1}}\ldots s_{\alpha_{k+1-p}}(\alpha_{k-p})$. 
    
    We show that $\gamma_{k-p}=s_{\alpha_1}\ldots s_{\alpha_{k-p-1}}(\alpha_{k-p})\in \Inv(s_\beta)$ verifies $\langle \beta^\vee,\gamma_{k-p}\rangle \geq 2$, which contradicts the minimality of $k$. Note that $\beta=-s_{\gamma_k}(\gamma_{k-p})$
    We compute:
    \begin{align*}\langle \beta^\vee,\gamma_{k-p}\rangle &= \langle -s_{\gamma_k}(\gamma_{k-p}^\vee),\gamma_{k-p}\rangle \\ &= -(2 - \langle \gamma_{k-p}^\vee,\gamma_k\rangle \langle \gamma^\vee_k,\gamma_{k-p}\rangle) \\ &= \langle \beta^\vee,\gamma_k\rangle \langle \gamma_k^\vee,\beta\rangle-2.
    \end{align*} 
    So since $\langle \beta^\vee,\gamma_k\rangle \langle \gamma_k^\vee,\beta\rangle \geq 4$, we get $\langle \beta^\vee,\gamma_{k-p}\rangle \geq 2$, and with a similar computation, we find that $\langle \gamma_{k-p}^\vee,\beta\rangle = \langle \beta^\vee,\gamma_{k-p}\rangle \geq 2$ as well, so $\langle \beta^\vee,\gamma_{k-p}\rangle \langle \gamma_{k-p}^\vee,\beta\rangle \geq 4$. This contradicts the minimality of $k$ and thus $\beta \notin \Inv(s_{\gamma_k})$.
    \end{enumerate}
\end{proof}

\begin{Lemma}\label{Lemma : height is length G2}
    Let $\lambda \in Y^{++}$, $v\in W^\lambda$ and $w\in W$. Let $\beta \in \Phi_+$ and let $\gamma\in \Inv(s_\beta)$ be such that $\beta \in \Inv(s_\gamma)$ and $\langle \beta^\vee,\gamma\rangle =3,\; \langle \gamma^\vee,\beta\rangle =1$.
    
    Then $\pi^{v(\lambda+\beta^\vee)}s_{v(\beta)}w$ and $\pi^{vs_\beta(\lambda+\beta^\vee)}s_{v(\beta)}w$ do not cover $\pi^{v(\lambda)}w$.
\end{Lemma}
\begin{proof}
     We show that, with the assumptions of the statement, $\beta$ and  $\gamma$ appear as positive roots of a root subsytem of $\Phi$ isomorphic to $G_2$, and we use this system to construct chains replacing the ones in the proof of Lemma~\ref{Lemma : height is length}.

    First, note that $-s_\gamma(\beta)$ lies in $\Inv(s_\beta)$ (so $s_\beta s_\gamma(\beta)$ is positive). Indeed, we can write, as in the proof of Lemma~\ref{SubLemma : lengthexpression}, $\beta=s_{\alpha_1}\ldots s_{\alpha_n}(\beta_0)$ for a minimal $n$, and $\gamma=s_{\alpha_1}\ldots s_{\alpha_{k-1}}(\alpha_k)$ for some $k \leq n$. Then, since $\beta \in \Inv(s_\gamma)$, $\beta$ is also of the form $s_{\alpha_1}\ldots s_{\alpha_k}s_{\alpha_{k-1}}\ldots s_{\alpha_{k-p+1}}(\alpha_{k-p})$ for some $p\leq k-1$, and thus $-s_\gamma(\beta)=s_{\alpha_1}\ldots s_{\alpha_{k-p-1}}(\alpha_{k-p})\in \Inv(s_\beta)$.
    Therefore we have the following positive roots, and their associated coroots (the notation will become clear afterwards):
    \begin{enumerate}
        \item $\theta_1:=-s_\gamma(\beta)=\gamma-\beta \in \Phi_+$, with associated coroot $\theta_1^\vee = -s_\gamma(\beta^\vee)=3\gamma^\vee-\beta^\vee$.
        \item $\tilde \beta:=-s_\beta(\gamma)=3\beta-\gamma\in \Phi_+$, with associated coroot $\tilde \beta^\vee=-s_\beta(\gamma^\vee)=\beta^\vee-\gamma^\vee$.
        \item $\tilde \gamma:= s_\beta s_\gamma(\beta) = 2\beta-\gamma \in \Phi_+$, with associated coroot $\tilde \gamma^\vee=s_\beta s_\gamma(\beta^\vee)=2\beta^\vee-3\gamma^\vee$.
    \end{enumerate}
    Let us also denote $\theta_2=s_{\theta_1}(\gamma)=3\beta-2\gamma$, with associated coroot $\theta_2^\vee=\beta^\vee-2\gamma^\vee$. Then one can check that $\{\theta_1,\theta_2\}$ form the positive simple roots of a $G_2$ root system (in the sense that $\langle \theta_1^\vee,\theta_2\rangle =-3$ and $\langle \theta_2^\vee,\theta_1\rangle =-1$), such that $\gamma=s_{\theta_1}(\theta_2)$, $\beta=s_{\theta_1}s_{\theta_2}(\theta_1)$, $\tilde \gamma = s_{\theta_2}(\theta_1)$ and $\tilde \beta =s_{\theta_2}s_{\theta_1}(\theta_2)$. However, $\theta_2$ may not be a positive root in $\Phi$, and we thus need to distinguish these two cases.

    Let us first suppose that $\theta_2$ lies in $\Phi_+$. Notice that \begin{equation}\label{eq: G2case1}\theta_1^\vee+\tilde \beta^\vee+\theta_2^\vee=(3\gamma^\vee-\beta^\vee)+(\beta^\vee-\gamma^\vee)+(\beta^\vee-2\gamma^\vee)=\beta^\vee,\end{equation} and \begin{equation}\label{eq: G2case2}s_{\theta_1}s_{\tilde \beta} s_{\theta_2}=s_{\theta_1}(s_{\theta_2}s_{\theta_1}s_{\theta_2}s_{\theta_1}s_{\theta_2})s_{\theta_2}=s_{\theta_1}s_{\theta_2}s_{\theta_1}s_{\theta_2}s_{\theta_1}=s_{\theta_2}s_{\tilde \beta} s_{\theta_1}=s_\beta.\end{equation}

    Moreover, we have \begin{align}\langle \theta_2^\vee,\tilde \beta\rangle &= \langle \beta^\vee-2\gamma^\vee,3\beta-\gamma\rangle =1>0 \label{eqg21}\\ \langle \theta_2^\vee+\tilde \beta^\vee,\theta_1\rangle &=\langle 2\beta^\vee-3\gamma^\vee,\gamma-\beta\rangle=-1.\label{eqg22}\end{align}

    \begin{enumerate}
        \item Suppose first that $\langle \lambda,\theta_1\rangle >0$. Since $\lambda$ is dominant and by Computation~\eqref{eqg21},\eqref{eqg22}, $-\langle \lambda+\theta_2^\vee,\tilde \beta\rangle<0$ and  $-\langle\lambda+\theta_2^\vee+\tilde \beta^\vee,\theta_1 \rangle\leq 0$. Using Formula~\eqref{eq : inequality condition} (resp. ~\eqref{eq : inequality conditionbis}) with $(\mu_0,\alpha_0,m)=(\lambda,\theta_2,1)$ for the first inequality, $(\lambda+\theta_2^\vee,\tilde \beta,1)$ for the second and $(\lambda+\theta_2^\vee+\tilde \beta^\vee,\theta_1,1)$ for the third (recall Formulas~\eqref{eq: G2case1},~\eqref{eq: G2case2}), we obtain the chain~\eqref{chain7} (resp.~\eqref{chain8})
        \begin{align}
            \pi^{v(\lambda)}w &< \pi^{v(\lambda+\theta_2^\vee)}s_{v(\theta_2)}w &<& \pi^{v(\lambda+\theta_2^\vee+\tilde \beta^\vee)}s_{v(\tilde\beta)}s_{v(\theta_2)}w &<& \pi^{v(\lambda+\beta^\vee)}s_{v(\beta)}w \label{chain7}\\
            \pi^{v(\lambda)}w &< \pi^{vs_{\theta_2}(\lambda+\theta_2^\vee)}s_{v(\theta_2)}w &<& \pi^{vs_{\theta_2}s_{\tilde \beta}(\lambda+\theta_2^\vee+\tilde \beta^\vee)}s_{v(\theta_2)}s_{v(\tilde\beta)}w &<& \pi^{vs_\beta(\lambda+\beta^\vee)}s_{v(\beta)}w. \label{chain8}
        \end{align}
        \item If $\langle\lambda,\theta_1\rangle=0$, then $-\langle \lambda+\theta_2^\vee+\tilde \beta^\vee,\theta_1\rangle = 1$ so the last inequality in the chains~\eqref{chain7} and~\eqref{chain8} do not always hold, we have the following case distinction, which we already encountered in Lemma~\ref{SubLemma_2: <lambda+beta,tau>} and Lemma~\ref{Lemma : height is length}:
        \begin{enumerate}
            \item If $v(\theta_1)\in \Phi_+$, the chain~\eqref{chain7} still holds, else $vs_{\theta_1}<v$, $\lambda+\beta^\vee=s_{\theta_1}(\lambda+\theta_2^\vee+\tilde\beta)$ and we instead have the chain
            \begin{equation}\pi^{v(\lambda)}w < \pi^{vs_{\theta_1}(\lambda)}s_{v(\theta_1)}w<\pi^{vs_{\theta_1}(\lambda+\theta_2^\vee)}s_{v(\theta_1)}s_{v(\theta_2)}w < \pi^{vs_{\theta_1}(\lambda+\theta_2^\vee+\tilde \beta^\vee)}s_{v(\theta_1)}s_{v(\tilde\beta)}s_{v(\theta_2)}w \end{equation}where the last term is actually equal to $\pi^{v(\lambda+\beta^\vee)}s_{v(\beta)}w$.
            \item If $w^{-1}v(\theta_1)\in \Phi_-$, then since $\langle \lambda+\theta_2^\vee+\tilde\beta^\vee,\theta_1\rangle <0$, by Formula~\eqref{eq: leftaction} applied with the affinized root $vs_{\theta_2}s_{\tilde\beta}(\theta_1)[\langle \lambda+\theta_2^\vee+\tilde\beta^\vee,\theta_1\rangle]$, the third inequality of Chain~\eqref{chain8} still holds, and thus the whole chain remains correct. Otherwise if $w^{-1}v(\theta_1)\in \Phi_+$ we instead have the chain
            \begin{equation}\pi^{v(\lambda)}w <\pi^{v(\lambda)}s_{v(\theta_1)}w<  \pi^{vs_{\theta_2}(\lambda+\theta_2^\vee)}s_{v(\theta_2)}s_{v(\theta_1)}w <\pi^{vs_{\theta_2}s_{\tilde\beta}(\lambda+\theta_2^\vee+\tilde\beta^\vee)}s_{v(\theta_2)}s_{v(\tilde\beta)}s_{v(\theta_1)}w\end{equation} where the last term is actually equal to $\pi^{vs_\beta(\lambda+\beta^\vee)}s_{v(\beta)}w$ since $\lambda+\theta_2^\vee+\tilde\beta^\vee = s_{\theta_1}(\lambda+\beta^\vee)$.
        \end{enumerate}
    
    \end{enumerate}
    We now turn to the case of $\theta_2 \in \Phi_-$. Notice that $\beta^\vee = -\theta_2^\vee+\tilde\gamma^\vee +\gamma^\vee$ and $s_\beta = s_\gamma s_{\tilde \gamma}  s_{\theta_2}=s_{\theta_2}s_{\tilde \gamma}s_\gamma$. Moreover, $\langle -\theta_2^\vee,\tilde \gamma \rangle = \langle 2\gamma^\vee-\beta^\vee,2\beta-\gamma\rangle = -1$ and $\langle -\theta_2^\vee+\tilde \gamma^\vee,\gamma\rangle = \langle \beta^\vee-\gamma^\vee,\gamma\rangle =1$. Therefore, since $\lambda$ is dominant and $-\theta_2$ is a positive root:
    \begin{enumerate}
        \item If $\langle \lambda,\tilde \gamma\rangle >0$, then using Formula~\eqref{eq : inequality condition} (resp. ~\eqref{eq : inequality conditionbis}) with $(\mu_0,\alpha_0,m)=(\lambda,-\theta_2,1)$ for the first inequality, $(\lambda-\theta_2^\vee,\tilde \gamma,1)$ for the second and $(\lambda-\theta_2^\vee+\tilde \gamma^\vee,\gamma,1)$ for the third, we obtain the chain~\eqref{chain9} (resp.~\eqref{chain10})
        \begin{align}
            \pi^{v(\lambda)}w &< \pi^{v(\lambda-\theta_2^\vee)}s_{v(\theta_2)}w &< \pi^{v(\lambda-\theta_2^\vee+\tilde \gamma^\vee)}s_{v(\tilde\gamma)}s_{v(\theta_2)}w &< \pi^{v(\lambda+\beta^\vee)}s_{v(\beta)}w \label{chain9}\\
            \pi^{v(\lambda)}w &< \pi^{vs_{\theta_2}(\lambda-\theta_2^\vee)}s_{v(\theta_2)}w &< \pi^{vs_{\theta_2}s_{\tilde \gamma}(\lambda-\theta_2^\vee+\tilde \gamma^\vee)}s_{v(\theta_2)}s_{v(\tilde\gamma)}w &< \pi^{vs_\beta(\lambda+\beta^\vee)}s_{v(\beta)}w. \label{chain10}
        \end{align}
        \item Suppose now that $\langle \lambda,\tilde \gamma\rangle = 0$, so $\lambda-\theta_2^\vee+\tilde \gamma = s_{\tilde \gamma}(\lambda-\theta_2^\vee)$. Then
        \begin{enumerate}
            \item If $v(\tilde \gamma)\in\Phi_+$, the chain~\eqref{chain9} still holds. Else, $vs_{\tilde \gamma}<v$ and we instead have the chain
            \begin{equation}\pi^{v(\lambda)}w < \pi^{vs_{\tilde \gamma}(\lambda)}s_{v(\tilde \gamma)}w < \pi^{vs_{\tilde \gamma}(\lambda-\theta_2^\vee)}s_{v(\tilde\gamma)}s_{v(\theta_2)}w < \pi^{v(\lambda+\beta^\vee)}s_{v(\beta)}w\end{equation}
            where the first inequality comes from Proposition~\ref{affine_to_relative0} and the two others from Formula~\eqref{eq : inequality condition}.
            \item If $w^{-1}v(\tilde \gamma)\in \Phi_-$, then the chain~\eqref{chain10} still holds. Else $w^{-1}v(\tilde \gamma)\in \Phi_+$ and we instead have the chain
            \begin{equation}\pi^{v(\lambda)}w<\pi^{v(\lambda)}s_{v(\tilde \gamma)}w < \pi^{vs_{\theta_2}(\lambda-\theta_2^\vee)}s_{v(\theta_2)}s_{v(\tilde \gamma)}w < \pi^{vs_\beta(\lambda+\beta^\vee)}s_{v(\beta)}w\end{equation} where the first inequality is deduced from Formula~\eqref{eq: leftaction} used with the affinized root $v(\tilde \gamma)[\langle \lambda,\tilde \gamma\rangle]$, and the two others from Formula~\eqref{eq : inequality conditionbis} as for the chain~\eqref{chain10}.
        \end{enumerate}
    \end{enumerate}
  \end{proof}

  \begin{Proposition}\label{Proposition : height is length}
     Let $\lambda \in Y^{++}$, $v\in W^\lambda$ and $w\in W$. Let $\beta\in \Phi_+$ and suppose that $\pi^{v(\lambda+\beta^\vee)}s_{v(\beta)}w$ or $\pi^{vs_\beta(\lambda+\beta^\vee)}s_{v(\beta)}w$ cover $\pi^{v(\lambda)}w$. Then $\beta$ is a quantum root.
\end{Proposition}
\begin{proof}
    We prove the converse, suppose that $\beta$ is not a quantum root. By Lemma~\ref{Lemma: quantum roots}, there is $\gamma\in \Inv(s_\beta)\setminus\{\beta\}$ such that $\langle \beta^\vee,\gamma\rangle \geq 2$. If $\beta\notin \Inv(s_\gamma)$ we apply Lemma~\ref{Lemma : height is length}. We can also apply it in case $\langle \beta^\vee,\gamma\rangle \langle \gamma^\vee,\beta\rangle\neq3$ by Lemma~\ref{SubLemma : lengthexpression}. Finally if $\beta\in \Inv(s_\gamma)$ and $\langle\beta^\vee,\gamma\rangle \langle \gamma^\vee,\beta\rangle =3$ we apply Lemma~\ref{Lemma : height is length G2}. Therefore if $\beta$ is not a quantum root then $\pi^{v(\lambda+\beta^\vee)}s_{v(\beta)}w$ and $\pi^{vs_\beta(\lambda+\beta^\vee)}s_{v(\beta)}w$ do not cover $\pi^{v(\lambda)}w$.
\end{proof}

\subsection{Conclusion}\label{subsection: conclusion}
We now have everything to prove Theorem~\ref{Theorem : maintheoremintro}:
\setcounter{Theoremintro}{0}
\begin{Theoremintro}\label{Theorem: covers general case}
Suppose that $\by,\bx\in W^a_+$ are such that $\bx\leq \by$. Then 
\begin{equation}\bx\lhd \by\iff \ell^a(\by)=\ell^a(\bx)+1.\end{equation}
\end{Theoremintro}
\begin{proof}
    If $\bx\leq \by$ with $\ell^a(\by)=\ell^a(\bx)+1$, then by strict compatibility of $\ell^a$ (Theorem~\ref{Theorem Muthiah Orr}), $\by$ covers $\bx$. Conversely suppose that $\by$ covers $\bx$. If $\by$ and $\bx$ have same dominance class then Theorem~\ref{non_reg_covers_same_orbit} implies that $\ell^a(\by)=\ell^a(\bx)+1$. Else, if $\pr^{Y^+}(\by)\notin W\cdot\pr^{Y^+}(\bx)$, by Proposition~\ref{n_is_small}, $\by$ is of the form $\pi^{v(\lambda+\beta^\vee)}s_{v(\beta)}w$ or $\pi^{vs_\beta(\lambda+\beta^\vee)}s_{v(\beta)}w$, for $\bx=\pi^{v(\lambda)}w$ with $\lambda \in Y^{++}$, $v\in W^\lambda$, $w\in W$ and $\beta \in \Phi_+$. Then, by Corollary~\ref{Corollary: simplification}, we have $\ell^a(\by)-\ell^a(\bx)=2\htt(\beta^\vee)-\ell(s_\beta)$. Moreover, by Proposition~\ref{Proposition : height is length}, $\beta$ is a quantum root and therefore in this case as well:
    $$\ell^a(\by)-\ell^a(\bx)=1.$$

\end{proof}
Along the way, we have obtained a classification of covers, which we summarize in Proposition~\ref{Proposition classification general}. This is to be compared with~\cite[Proposition 4.5]{schremmer2023affine}.

\begin{Proposition}\label{Proposition classification general}
    Let $\bx=\pi^{v(\lambda)}w\in W^a_+$ with $\lambda\in Y^{++}$, $v\in W^\lambda$ and $w\in W$. Let $J\subset S$ be the set of simple reflections such that $W_\lambda=W_J$, recall Notation~\ref{Notation : paraboliccosets} and Definition~\ref{Definition quantum}. Then covers of $\bx$ are the elements of the following form. 
    \begin{enumerate}
        \item $\pi^{v(\lambda)}s_{v(\beta)}w=\bx s_{w^{-1}v(\beta)[0]}$ for $\beta \in \Phi$ such that $\ell(s_\beta v^{-1}w)=\ell(v^{-1}w)+1$.
        \item $\pi^{vs_\beta(\lambda)}s_{v(\beta)} w=s_{v(\beta)[0]}\bx$ for $\beta\in \Phi_+$ such that:
        \begin{enumerate}
            \item $\langle \lambda,\beta\rangle \neq 0$
            \item $\ell(vs_\beta)=\ell(v)-1$
            \item If $u$ denotes $vs_\beta$ and $u_J$ the maximal $W_J$-suffix of $u$, then $vu_J^{-1}$  is on a minimal gallery from $v$ to $w$.
        \end{enumerate}
        \item $\pi^{v(\lambda+\beta^\vee)}s_{v(\beta)}w=s_{v(\beta)[\langle \lambda,\beta\rangle+1]}\bx=\bx s_{w^{-1}v(\beta)[1]}$ for $\beta\in \Phi_+$ such that:
        \begin{enumerate}
            \item $\beta$ is a quantum root
            \item $\lambda+\beta^\vee$ is an almost dominant coweight
            \item For $u=v^{\lambda+\beta^\vee}$, $v$ is on a minimal gallery from $1$ to $vu$, that is to say $\ell(vu)=\ell(v)+\ell(u)$
            \item For $\tilde v=v^{v(\lambda+\beta^\vee)}$, $s_{v(\beta)}\tilde v$ is on a minimal gallery from $v$ to $w$.
        \end{enumerate}
        \item $\pi^{vs_\beta(\lambda+\beta^\vee)}s_{v(\beta)}w=s_{v(\beta)[-1]}\bx$ for $\beta\in \Phi_+$ such that:
        \begin{enumerate}
            \item $\beta$ is a quantum root
            \item $\lambda+\beta^\vee$ is an almost dominant coweight
            \item For $u=v^{\lambda+\beta^\vee}$, $v$ is on a minimal gallery from $1$ to $vs_\beta u$
            \item For $\tilde v=v^{vs_\beta(\lambda+\beta^\vee)}$, $s_{v(\beta)}\tilde v$ is on a minimal gallery from $v$ to $w$.
        \end{enumerate}
    \end{enumerate}
\end{Proposition}
In particular, suppose that $\lambda\in Y^{++}$ is regular and is such that $\lambda+\beta^\vee$ is also regular for any quantum root $\beta\in \Phi_+$, we then say that $\lambda$ is superregular. Proposition~\ref{Proposition classification general} can be simplified for superregular coweights. This is to be compared with~\cite[Proposition 4.4]{lam2010quantum} and~\cite[Theorem 2]{welch2022classification}.
\begin{Proposition}
    Let $\bx=\pi^{v(\lambda)}w\in W^a_+$ with $\lambda\in Y^{++}$ a superregular coweight and $v,w\in W$. Then covers of $\bx$ are the elements of the following form.

    \begin{enumerate}
        \item $\bx s_{\beta[0]}=\pi^{v(\lambda)}ws_\beta$ for $\beta \in \Phi_+$ such that $\ell(v^{-1}ws_\beta)=\ell(v^{-1}w)+1$.
        \item $s_{\beta[0]}\bx=\pi^{s_\beta v(\lambda)}s_{\beta}w$ for $\beta \in \Phi_+$ such that $\ell(s_\beta v)=\ell(v)-1$.
        \item $\bx s_{w^{-1}v(\beta)[1]}=\pi^{v(\lambda+\beta^\vee)}s_{v(\beta)}w$ for $\beta\in \Phi_+$ a quantum root such that $\ell(v^{-1}w)=\ell(s_\beta)+\ell(s_\beta v^{-1}w)$ (otherwise said $s_\beta v^{-1}w \leq_R v^{-1}w$).
        \item $s_{v(\beta)[-1]}\bx=\pi^{vs_\beta(\lambda+\beta^\vee)}s_{v(\beta)}w$ for $\beta\in \Phi_+$ a quantum root such that $\ell(vs_\beta)=\ell(v)+\ell(s_\beta)$ (otherwise said $s_\beta \leq_R vs_\beta$).
    \end{enumerate}
\end{Proposition}
For Kac-Moody root systems, the existence of superregular coweights is not clear a priori. However in an upcoming joint work with Hébert we prove that any Kac-Moody root system admits a finite number of quantum roots, which ensures the existence of superregular coweights. We also use this finiteness to deduce that any element of $W^+_a$ admits a finite number of covers; in particular intervals in $W^+_a$ are finite.
\bibliography{covers.bib}

\end{document}